\newtheorem{lemma}{Lemma}
\newtheorem*{lemma*}{Lemma}
\newtheorem{proposition}{Proposition}
\newtheorem*{proposition*}{Proposition}
\newtheorem{theorem}{Theorem}
\newtheorem{theorem*}{Theorem*}
\newtheorem{corollary}{Corollary}
\theoremstyle{definition}
\newtheorem{defi}{Definition}
\newtheorem{example}{Example}
\newtheorem{remark}{Remark}
\newcommand{\Rep}{\textnormal{Rep}}
\newcommand{\Hom}{\textnormal{Hom}}
\newcommand{\I}{\mathbb{I}}
\newcommand{\colim}{\textnormal{colim}}
\renewcommand{\lim}[1]{\underset{#1}{\textnormal{lim}}\;}
\newcommand{\R}{\mathbb{R}}
\newcommand{\Z}{\mathbb{Z}}
\newcommand{\C}{\mathbb{C}}
\newcommand{\F}{\mathbb{F}}
\newcommand{\Q}{\mathbb{Q}}
\newcommand{\g}{\mathfrak{g}}
\newcommand{\Ad}{\textnormal{Ad}}
\newcommand{\adj}{\textnormal{ad}}
\newcommand{\Aut}{\textnormal{Aut}}
\newcommand{\ad}{\textnormal{ad}}
\newcommand{\Ant}{{\mathrm{Ant}}}
\newcommand{\Mat}{{\mathrm{Mat}}}
\newcommand{\GL}{{\mathrm{GL}}}
\newcommand{\SL}{{\mathrm{SL}}}
\newcommand{\Sp}{{\mathrm{Sp}}}
\newcommand{\He}{{\mathrm{H}}}
\newcommand{\st}{\mathfrak{st}}
\newcommand{\h}{{\mathfrak{h}}}
\newcommand{\Gr}{{\mathrm{Gr}}}
\newcommand{\can}{{\mathrm{can}}}
\renewcommand{\O}{\mathrm{O}}
\newcommand{\U}{\mathrm{U}}
\newcommand{\Or}{\mathrm{Or}}
\renewcommand{\I}{\mathrm{Iso}}
\newcommand{\Emb}{\mathrm{Emb}}
\newcommand{\im}{\mathrm{im}}
\newcommand{\Sym}{\mathrm{Sym}}
\title{Topological components of spaces of commuting elements in connected nilpotent Lie groups}
\author{Omar Antol\'in Camarena and Bernardo Villarreal}
\begin{document}

\maketitle

\def\lf{\left\lfloor}
\def\rf{\right\rfloor}

\maketitle

\begin{abstract}
We study the homotopy type of spaces of commuting elements in connected nilpotent Lie groups, via almost commuting elements in their Lie algebras. We give a necessary and sufficient condition on the fundamental group of such a Lie group $G$ to ensure $\Hom(\Z^k,G)$ is path-connected. In particular for the reduced upper unitriangular groups and the reduced generalized Heisenberg groups, $\Hom(\Z^k,G)$ is not path-connected, and we compute the homotopy type of its path-connected components in terms of Stiefel manifolds and the maximal torus of $G$.  
\end{abstract}

\section{Introduction}


The space \(\Hom (\Z^k,G)\) of ordered commuting $k$-tuples in a Lie group \(G\) has been studied in various settings such as geometry, homotopy theory and mathematical physics ---in large part because \(\Hom (\Z^k,G)\) may be identified with the space of flat \(G\)-bundles over a rank \(k\)-torus. From the point of view of physics, the case of interest is when \(G\) is a compact Lie group, and from the homotopical point of view this may be justified by a remarkable result from A. Pettet and J. Souto \cite{P-S} which states that for the group of complex or real points of a reductive algebraic group \(G\) (defined over \(\R\) in the latter case), the inclusion \(K \hookrightarrow G\) of its maximal compact subgroup induces a deformation retraction \(\Hom (\Z^k,G)\simeq\Hom(\Z^k,K)\). However, there are many Lie groups which are not algebraic where such a homotopy equivalence is far from achievable. For instance, W. Goldman \cite{GoldmanFlat2} studied the space of commuting pairs in  \(\widetilde{\He}_3(\R)\), the reduced Heisenberg group over $\R$, and showed that \(\Hom(\Z^2,\widetilde{\He}_3(\R))\) has an infinite number of path-connected components, whereas the space of commuting pairs in $S^1\subset \widetilde{\He}_3(\R)$ is path-connected. Moreover, Goldman shows that at the level of path-connected components, the map that forgets the flat connection
\[\pi_0(\Hom(\Z^k,G))\to [B\Z^k,BG] = H^2(\Z^k;\pi_1(G))\,,\]  
specialized to $G=\widetilde{\He}_3(\R)$ and $k=2$, is bijective. In this paper we will use this map to analyze the homotopy type of $\Hom(\Z^k,G)$ for connected nilpotent Lie groups, but from the Lie algebra perspective which is viable under these assumptions. 

For a large class of nilpotent groups, we obtain a description of $\Hom(\Z^k,G)$ in terms of Stiefel manifolds and its maximal compact subgroup. Consider the group of $n\times n$ upper unitriangular matrices $\U_{n}(\F)$ over $\F$, where $\F = \R$ or $\C$. We let $A\subset \U_{n}(\F)$ denote the standard central subgroup isomorphic to $\Z$ in the real case and to $\Z\times\Z$ in the complex case. Then we define the \emph{reduced} group of $n\times n$ upper unitriangular matrices as
\[\widetilde{\U}_n(\F):=\U_n(\F)/A\,.\] 
For $n\geq 2$, the maximal compact subgroup $K\subset \widetilde{\U}_n(\F)$ is in all real cases the central subgroup $S^1$ and in all complex cases the central subgroup $S^1\times S^1$. In particular the space of commuting $k$-tuples of $K$ is path-connected for every $k\geq 1$. The situation for the ambient groups is quite different. Our first result concerns a complete identification of the homotopy type of $\Hom(\Z^k,\widetilde{\U}_n(\F))$. For convenience we set \(D(\R):=\C\) and \(D(\C):=\mathbb{H}\) (the quaternions), so that we have a unified way to refer to them as \(D(\F)\) ---here \(D\) stands for the Cayley--Dickson doubling construction. 
\def\lf{\left\lfloor}
\def\rf{\right\rfloor}

\begin{theorem}\label{main-theoremST}
Let \(k,n\geq 1\), and \(\F=\R\) or \(\C\). Then the map that forgets the flat structure at the level of path-connected components
\[\pi_0(\Hom(\Z^k,\widetilde{\U}_{n+2}(\F))\to H^2(\Z^k;A)\]
is injective for every \(k\geq 2\), and surjective if and only if \(\lf\frac{k}{2}\rf\leq n\). Moreover, each component parametrized by a class \(\beta\in H^2(\Z^k;A)\) is homotopy equivalent to \[V_r(D(\F)^n)\times (S^1)^{k\dim_\R\F }\,,\] where \(2r=\mathrm{rank}(\beta)\).
\end{theorem}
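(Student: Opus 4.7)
The plan is to translate the problem to the Lie algebra $\mathfrak{u}_{n+2}(\F)$ and then to symplectic linear algebra. Since $\U_{n+2}(\F)$ is simply connected nilpotent, $\exp\colon \mathfrak{u}_{n+2}(\F) \to \U_{n+2}(\F)$ is a diffeomorphism, and $\widetilde{\U}_{n+2}(\F) = \U_{n+2}(\F)/A$ with $A$ a central lattice. Lifting a commuting $k$-tuple to $\tilde{g}_i = \exp X_i \in \U_{n+2}(\F)$, the commutation condition modulo $A$ becomes $\exp(\Psi(X_i, X_j)) \in A$, where $\Psi$ is the Baker--Campbell--Hausdorff commutator. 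The grading of $\mathfrak{u}_{n+2}(\F)$ by superdiagonals forces the lowest-order term $[X_i, X_j]$ of $\Psi$ to be central on its own, since higher BCH terms live in strictly higher grades and cannot cancel non-central contributions. Thus the condition reduces to $[X_i, X_j] \in \log(A) \subset Z(\mathfrak{u}_{n+2}(\F)) \cong \F$ for all $i, j$.

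Writing
\[
X_i = \begin{pmatrix} 0 & a_i^T & c_i \\ 0 & M_i & b_i \\ 0 & 0 & 0 \end{pmatrix}, \qquad a_i, b_i \in \F^n,\ M_i \in \mathfrak{u}_n(\F),\ c_i \in \F,
\]
a short computation shows $[X_i, X_j]$ is central iff $M_i M_j = M_j M_i$, $a_i^T M_j = a_j^T M_i$ and $M_i b_j = M_j b_i$, with central value $\omega(u_i, u_j) := a_i^T b_j - a_j^T b_i$ for $u_i := (a_i, b_i) \in \F^{2n}$. The cohomology class $\beta$ is represented by these scalars. The homotopy $M_i \mapsto t M_i$ preserves all three algebraic constraints (they are homogeneous in $M$) and fixes the $\omega$-values, producing a strong deformation retraction onto the locus with every $M_i = 0$. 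The parameters $c_i$ then vary freely, and modulo the central $A$-action contribute a factor $K^k \cong (S^1)^{k\dim_\R \F}$.

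The main obstacle, I expect, is identifying the space of tuples $(u_i) \in (\F^{2n})^k$ satisfying $\omega(u_i, u_j) = \log\beta(e_i, e_j)$ up to homotopy with $V_r(D(\F)^n)$, where $2r = \mathrm{rank}(\beta)$. A $\GL_k(\Z)$-change of basis puts $\beta$ into skew Smith normal form with $r$ nonzero $2\times 2$ blocks, and rescaling normalizes the nonzero entries to $1$. The problem then splits: the first $2r$ vectors form a length-$2r$ partial symplectic frame in $\F^{2n}$ (the $\Sp(2n,\F)$-homogeneous space $\Sp(2n,\F)/\Sp(2n-2r,\F)$), while the remaining $k-2r$ vectors sit in the symplectic orthogonal of its span with vanishing mutual pairings and contract linearly to $0$. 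Replacing each symplectic group by its maximal compact yields $\U(n)/\U(n-r) = V_r(\C^n)$ when $\F = \R$, and the compact-symplectic quotient $V_r(\mathbb{H}^n)$ when $\F = \C$, uniformly $V_r(D(\F)^n)$.

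Injectivity on $\pi_0$ is then immediate from path-connectedness of the fiber $V_r(D(\F)^n) \times (S^1)^{k\dim_\R\F}$. Surjectivity requires $V_r(D(\F)^n) \ne \emptyset$ for every $r = \mathrm{rank}(\beta)/2$ arising from some $\beta \in H^2(\Z^k; A)$; since the maximal rank of such a skew form is $2\lfloor k/2\rfloor$, this is equivalent to $\lfloor k/2\rfloor \le n$.
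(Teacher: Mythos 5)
Your overall strategy is the same as the paper's: translate via the exponential to the Lie algebra $\st_{n+2}(\F)$, deformation-retract onto the Heisenberg subalgebra by scaling the block $M_i$, decompose along the central coordinate $c_i$ to peel off $(S^1)^{k\dim_\R\F}$, and identify the remaining fibers with (up to homotopy) the $\Sp(2n,\F)$-homogeneous spaces $\Sp(2n,\F)/\Sp(2(n-r),\F)$, which retract onto $V_r(D(\F)^n)$. The paper packages the first two steps as Proposition~\ref{prop:AC-LieGrp-LieAlg} plus Proposition~\ref{prop:defretrank1}, and the symplectic-frame identification as Lemmas~\ref{rank-inv}, \ref{some-homeo-typeX}, \ref{lem:symp-complex-Stiefel}, but the content is the same.

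There is, however, a genuine gap in your justification of the BCH reduction. You write that ``higher BCH terms live in strictly higher grades and cannot cancel non-central contributions,'' hence $[X_i,X_j]$ is central whenever $\Psi(X_i,X_j)\in\log(A)$. This is not correct as stated. The grading on $\st_{n+2}(\F)$ by superdiagonals only ensures that a length-$\ell$ nested bracket lives in degree $\geq\ell$. Since $[X,Y]$ starts in degree $2$ but in general has components in every degree up to $n+1$, a length-$3$ term such as $[X_1,[X_1,Y_2]]$ has degree $4$ and can, a priori, cancel against the degree-$4$ part of $[X,Y]$. The grading argument does kill the degree-$2$ part cleanly, and one can try to bootstrap inductively, but the bookkeeping quickly becomes delicate (one has to use the vanishing of lower-degree pieces already established to constrain higher BCH contributions), and you do not carry this out. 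The paper avoids the issue entirely with a cleaner Lie-theoretic argument: if $\Psi(X,Y)\in Z(\g)$, then applying $[X,-]$ and $[Y,-]$ shows that the subalgebra $L=\langle X,Y\rangle$ satisfies $[L,[L,L]]=[L,[L,[L,L]]]$, which by nilpotence forces $[L,[L,L]]=0$; hence \emph{all} length-$\geq 3$ brackets vanish and $\Psi(X,Y)=[X,Y]$. You should either adopt that argument or fill in the inductive grading argument carefully; as written the reduction is not proved.

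A minor remark: you invoke a $\GL_k(\Z)$ change of basis followed by rescaling to normalize $\beta$; in the paper this is streamlined by noting that the fibers $\varphi^{-1}(\beta)$ depend only on the $\GL_k(\F)$-congruence class of $\beta$ (Lemma~\ref{lem:varphi-equivariant}), and this is invoked only on the $\varphi$-fiber, separately from the $(S^1)^{k\dim_\R\F}$ factor coming from the quotient by the central lattice $A^k$ (which is manifestly independent of any change of basis). Your argument is implicitly doing the same thing, but it is worth being explicit that the $\GL_k(\F)$ action used to normalize $\beta$ does not act on the $(S^1)^k$ factor, since that factor is formed by quotienting by $A^k$ before any normalization is applied.
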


\noindent Here $V_q(D(\F)^m)$ as usual stands for the Stiefel manifold of $q$-frames in $D(\F)^m$, and we think of a cohomology class $\beta$ as an antisymmetric bilinear form over $\F^k$ where $\mathrm{rank}(\beta)$ is the rank of its associated matrix with respect to the canonical basis of $\F^k$. 

The generalized Heisenberg groups $\He_{2n+1}(\F)$, of dimension $2n+1$ over $\F$, are naturally subgroups of $\U_{n+2}(\F)$ that contain the center, so we analogously define the \emph{reduced} generalized Heisenberg group of dimension $2n+1$ as \[\widetilde{\He}_{2n+1}(\F): = {\He}_{2n+1}(\F)/A\,.\]  Our theorem heavily relies on the fact that the inclusion (Proposition \ref{prop:defretrank1}) 
 \[\Hom(\Z^k,\widetilde{\He}_{2n+1}(\F))\xhookrightarrow{\simeq}\Hom(\Z^k,\widetilde{\U}_{n+2}(\F))\]
admits a deformation retraction. Then we show that, up to homotopy, the path-connected components of $\Hom(\Z^k,\widetilde{\He}_{2n+1}(\F))$ correspond to \emph{symplectic} frames of $(\F^{2n},\omega_n)$ where $\omega_n$ is the symplectic form that determines $\He_{2n+1}(\F)$. In Subsection \ref{sec:looped-comm-map} we give an interesting application of Theorem \ref{main-theoremST} to classifying spaces for commutativity. We exhibit a connected Lie group $G$ (infinite dimensional) whose looped commutator map splits, up to homotopy.

Our approach to understand the homotopy type of commuting elements in a connected nilpotent Lie group $G$ is via \emph{almost} commuting elements in its Lie algebra $\g$. These spaces are related by the Baker--Campbell--Hausdorff formula (see Proposition \ref{prop:AC-LieGrp-LieAlg} for a precise statement). When $G$ is simply-connected and nilpotent we obtain that $\Hom(\Z^k,G)$ is contractible (Corollary \ref{Cor:HomSimpConn}), and hence it has the same homotopy type of the space of commuting elements in its maximal compact subgroup $K=\{1\}$, the trivial subgroup. If $G$ is connected and nilpotent but with non-trivial $\pi_1(G)$, a deformation retraction to the subspace of commuting elements in its maximal compact subgroup may or may not exist depending on a condition on $\pi_1(G)$, when regarded as a discrete subgroup of the center of the Lie algebra $Z(\g)$, namely \(\pi_1(G) \cong \ker(\exp|_{Z(\g)})\). We merge our results in Corollary \ref{cor:def-ret-1-comp} and Corollary \ref{cor:hom-path-conn} into the following theorem.

\begin{theorem}
Let $G$ be a connected nilpotent Lie group and $K\subset G$ a maximal compact subgroup. Then the inclusion $ \Hom(\Z^k,K)\hookrightarrow\Hom(\Z^k,G)$ admits a deformation retraction if and only if no non-trivial element in $\pi_1(G)$ corresponds to a Lie bracket under the isomorphism \(\pi_1(G) \cong \ker(\exp|_{Z(\g)})\).
\end{theorem}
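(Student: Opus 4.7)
The plan is to translate the problem to the Lie algebra via the correspondence of Proposition \ref{prop:AC-LieGrp-LieAlg} between commuting $k$-tuples in $G$ and almost commuting $k$-tuples in $\g$. A useful preliminary is that for $G$ a connected nilpotent Lie group, the maximal compact subgroup $K$ must be central (the adjoint action of any compact subgroup of a nilpotent Lie group is trivial), so $\mathfrak{k}$ equals the real span $W$ of $\pi_1(G) \cong \ker(\exp|_{Z(\g)})$ in $Z(\g)$, and $\Hom(\Z^k,K) = K^k$ is a torus.

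\emph{Sufficiency.} Assuming no non-trivial element of $\pi_1(G)$ is a Lie bracket, I would first argue that every almost commuting $k$-tuple in $\g$ is genuinely commuting. The almost commuting condition places each pairwise bracket $[X_i, X_j]$ in $\pi_1(G)$ (directly in the $2$-step case; in higher step, an induction on nilpotency class using that every iterated bracket is itself a single Lie bracket), and the hypothesis forces $[X_i, X_j] = 0$. Fixing a linear complement $V$ of $\mathfrak{k}$ in $\g$ and decomposing $X_i = X_{i,\mathfrak{k}} + X_{i,V}$, consider the linear homotopy
\[
H_t(X_1, \ldots, X_k) = (X_{1,\mathfrak{k}} + tX_{1,V}, \ldots, X_{k,\mathfrak{k}} + tX_{k,V}).
\]
Since $\mathfrak{k}$ is central, brackets depend only on the $V$-parts, so $H_t$ preserves the commuting variety in $\g^k$. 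As $\pi_1(G) \subset \mathfrak{k}$ acts by translation on $\mathfrak{k}$-parts, $H_t$ is $\pi_1(G)^k$-equivariant and descends through $\exp$ to a deformation retraction of $\Hom(\Z^k,G)$ onto $K^k$, fixing $K^k$ pointwise because $X_i \in \mathfrak{k}$ forces $X_{i,V} = 0$.

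\emph{Necessity.} Arguing the contrapositive, suppose $c = [X,Y] \in \pi_1(G)\setminus\{0\}$. Since $c$ is central, $[X,[X,Y]] = [Y,[X,Y]] = 0$, so BCH collapses and the group commutator of $\exp X$ and $\exp Y$ equals $\exp(c) = 1$. Hence $\tau = (\exp X, \exp Y, 1, \ldots, 1)$ is commuting in $G$, and its image under the forgetful map $\pi_0(\Hom(\Z^k,G)) \to H^2(\Z^k;\pi_1(G))$ is the non-trivial class $c \cdot e_1^* \wedge e_2^*$. Since $K$ is central, every tuple in $K^k$ classifies the trivial bundle and so maps to $0$ in $H^2$. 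Therefore $\tau$ lies in a path component of $\Hom(\Z^k,G)$ disjoint from $\Hom(\Z^k,K)$, and no deformation retraction --- which would induce a bijection on $\pi_0$ --- can exist.

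The main obstacle I anticipate is the rigorous descent of $H_t$ through $\exp$ in the higher-step nilpotent case; while the $2$-step case (for instance, the reduced Heisenberg algebra) is transparent, higher step introduces nested brackets in BCH, and one must carefully work with the almost commuting variety $\Hom^{\mathrm{ac}}(\Z^k,\g)$ and its $\pi_1(G)^k$-covering of $\Hom(\Z^k,G)$ in order to verify that the descended homotopy is well-defined and continuous.
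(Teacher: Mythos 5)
Your proof is correct and, modulo the common foundational results, takes a slightly more direct route than the paper. The paper proves this theorem as a consequence of a general structure theorem (Theorem \ref{maintheorem2}) describing \emph{all} path components of $\Hom(\Z^k,G)$ in terms of embedding spaces, and then obtains the present statement by combining Corollary \ref{cor:def-ret-1-comp} (the identity component deformation retracts to $K^k$, via Lemmas \ref{upper-bound-AlmCom} and \ref{defretX}) with Corollary \ref{cor:hom-path-conn} (path-connectedness holds iff no non-trivial Lie bracket lies in $\pi_1(G)$). Your sufficiency argument compresses this: you observe that under the hypothesis, $B_k(\g,A)=B_k(\g,\{0\})$ outright (since the defining condition $[X_i,X_j]\in A$ exhibits each $[X_i,X_j]$ as a Lie bracket lying in $\pi_1(G)$, hence forces it to vanish), and then run a single linear homotopy $H_t$ that shrinks the complement of $\mathfrak{k}$, which preserves the commuting variety because $\mathfrak{k}$ is central. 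Both sides of the trade are real: the paper's route yields the much more refined Theorem \ref{maintheorem2} as a byproduct, while yours gives a self-contained proof of exactly this theorem without invoking the $\Emb$-space description. Your necessity argument is essentially identical to the paper's Corollary \ref{cor:hom-path-conn}.

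Two small remarks. First, the parenthetical worry in your sufficiency paragraph about needing ``an induction on nilpotency class'' for higher-step algebras is unfounded: by definition, $B_k(\g,A)$ imposes the condition $[X_i,X_j]\in A$ at the Lie algebra level directly, in any step; no BCH or induction enters. Second, your anticipated obstacle about descending $H_t$ through $\exp$ is already resolved by Proposition \ref{prop:AC-LieGrp-LieAlg} and the homeomorphism (\ref{hom&ComElLieAlg}): once $H_t$ is $A^k$-equivariant on $B_k(\g,A)$ (which your argument establishes, since $A\subset\mathfrak{k}=\mathrm{span}_\R(\pi_1(G))$ and $H_t$ is the identity on $\mathfrak{k}$-parts), it descends automatically to the $A^k$-orbit space, which the cited proposition identifies with $\Hom(\Z^k,G)$. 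No further BCH bookkeeping is required.
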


As it  is well known, for non-compact Lie groups $G$, the space of isomorphism classes of representations \(\Rep(\Z^k,G)=\Hom(\Z^k,G)/G\) in general does not behave nicely from the topological point of view, as it may not even be a \(\mathcal{T}_1\)-space. However, Theorem \ref{main-theoremST} still gives a full account on the indexing set of path-connected components for $\Rep(\Z^k,\widetilde{\U}_{n+2}(\F))$ by a standard argument of injective composites (see Corollary \ref{cor:rep-redHeis}).


For $\widetilde{\He}_{2n+1}(\F)$ we describe in detail, what comes into place when modding out by the conjugation action. For instance, it turns out the the representation space $\Rep(\Z^2,\widetilde{\He}_{2n+1}(\F))$ contains the so-called \emph{irrational tori} (see Example \ref{example-irrational-tori}). It is not hard to believe then, that from the homotopical point of view, studying these moduli spaces   as they stand may not be achievable. As an alternative, we compute the maximal Hausdorff quotient which we denote by \(\Rep(\Z^k,\widetilde{\He}_{2n+1}(\F))^{\mathcal{T}_2}\), and show that the path-connected components pa\-ra\-me\-trized by a form \(\beta\in H^2(\Z^k,A)\) coincide with its maximal Hausdorff subspaces
\[\{f\in\Hom(\F^k,\F^{2n})\,|\,f^*\omega_n = \beta\}\]
 (which have the homotopy type of $V_r(D(\F)^n)$, where $r=\mathrm{rank}(\beta)$), except for the path-connected component of the trivial representation, which up to homotopy is simply $(S^1)^{k\dim_\R \F}$.

In the last part of the paper, from an independent interest, we study the space of $2n\times k$ matrices whose columns span an \emph{isotropic} subspace of $\F^{2n}$ with respect to the standard symplectic form.  In our notation this space corresponds to $X(k,n,0)$. Our motivation is that $X(k,n,0)$ is very closely related to $\Hom(\Z^k,\widetilde{\He}_{2n+1}(\F))_1$. We consider the filtration of $X(k,n,0)$ induced by the rank of a matrix. We found particularly interesting that the aforementioned filtration still carries homotopical information regarding the full space $\Hom(\Z^k,\widetilde{\He}_{2n+1}(\F))$, at least up to a fiber sequence. We denote the strata of the rank filtration by \(X(k,n,0)_d\), and then we have a smooth fiber bundle 
\[ X(k,n,0)_d\to \Gr(d,\F^k)\]
with fibers the space of linear isotropic embeddings \(\F^d\to \F^{2n}\), which we denote by $\I(2n,d,\F)$. We show that \(\I(2n,d,\F)\) is homotopy equivalent to the Stiefel manifold \(V_d(D(\F)^n)\). The main result of this last section is a homotopy stable splitting of the strata $X(k,n,0)_d$ into a wedge of Thom spaces following Crabb and Gon\c{c}alves \cite{CrabbGoncalves}. The splitting is depicted in Theorem \ref{prop:stable-splitting}.

\subsection*{Acknowledgments}

The second author would like to thank Jos\'e Cantarero, Jos\'e Manuel G\'omez and Simon Gritschacher for very helpful conversations.

\section{Almost commuting elements in nilpotent Lie groups}

We start with some structural results for commuting elements in connected nilpotent Lie groups, which are completely deducted from almost commuting elements in its Lie algebra with respect to the Lie bracket.

\subsection{A cohomological bound on connected components}

We will describe a general strategy to study almost commuting matrices, which mostly follows previously developed ideas as in \cite[Section 3]{AdemCohenGomezMPCPS}. 

Let \(G\) be a connected Lie group and \(X\) a connected CW complex. Consider a principal \(G\)-bundle \(E\to X\), with classifying map \(p\colon X\to BG\). Whenever \(E\) and \(X\) are manifolds and \(E\) has a flat connection, then \(p\) induces a group homomorphism \(p_*\colon \pi_1(X)\to G\). In general we may consider those principal \(G\)-bundles over \(X\) whose classifying map factors through \(BG^\delta\to BG\), where \(G^\delta\) is \(G\) with the discrete topology. Hence we can identify the set of group homomorphisms \(\Hom(\pi_1(X),G)\) with a subspace of \(\text{Map}(B\pi_1(X),BG)\).
 
We will be interested in parametrizing the path-connected components of \(\Hom(\pi_1(X),G)\). To do this let us consider the map
\[\Hom(\pi_1(X),G)\to H^2(X;\pi_1(G))\] 
that associates to each group homomorphism \(\rho\colon \pi_1(X)\to G\) an element in \(\tilde\rho\in H^2(X;\pi_1(G))\) by first considering the composite
\[\Hom(\pi_1(X),G)\hookrightarrow\text{Map}_*(B\pi_1(X),BG)\xrightarrow{{p_2}_*}\text{Map}_*(B\pi_1(X),B^2\pi_1(G))\xrightarrow{\pi_0}[B\pi_1(X),B^2\pi_1(G)] \,,\]
where \(p_2\colon BG\to B^2\pi_1(G)\) is the second Postnikov section of \(BG\), and then compose it with 
\[[B\pi_1(X),B^2\pi_1(G)]\xrightarrow{\cong}H^2(B\pi_1(X);\pi_1(G))\xrightarrow{p_1^*}H^2(X;\pi_1(G)),\]
where \(p_1\colon X\to B\pi_1(X)\) is the first Postnikov section of \(X\). In particular the correspondence \(\rho\mapsto \tilde\rho\) is locally constant. Let us spell out precisely what the assignment does. A map \(B\pi_1(X)\to B^2\pi_1(G)\) that factors through \(p_2\) induces a central extension \(1\to\pi_1(G)\to \Gamma\to \pi_1(X)\to 1\), which is classified by the homotopy pullback 
\[
\xymatrix{
B\Gamma\ar[r]\ar[d]&B\tilde{G}\ar[d]\ar[r]&{*}\ar[d]\\
B\pi_1(X)\ar[r]&BG\ar[r]&B^2\pi_1(G)\,,
}
\]
where \(\tilde{G}\) is a universal cover of \(G\). Then \(\tilde{\rho} = p_1^*(c)\), where \(c\) is the class of a cocycle associated to the central extension \(\Gamma\).

We will specialize to \(X = T_k\), a torus of rank \(k\). Let \(\rho\colon \Z^k\to G\) be a group homomorphism, with assigned extension \(0\to \pi_1(G) \to \Gamma \to \Z^k\to 0\) as described above. We can obtain \(\Gamma\) via the pullback diagram
\[\xymatrix{
\Gamma\ar[d]\ar[r]&\tilde{G}\ar[d]\\
\Z^k\ar[r]^\rho&G\,.}
\]
Let \(e_1,...,e_k\) be the canonical basis of \(\Z^k\), and choose lifts \(\tilde{e}_1,...,\tilde{e}_k\) to \(\Gamma\). Let \(\Ant_{k}(S)\) be the group of \(k\times k\) antisymmetric matrices with entries in an abelian group \(S\). Then the above correspondence is 
\[\rho\mapsto \tilde{\rho} \in H^2(T_k;\pi_1(G))\cong \Ant_k(\pi_1(G))\,,\]
where \(\tilde{\rho}_{ij} = [\tilde{e}_i,\tilde{e}_j]\) and  \([x,y]=x^{-1}y^{-1}xy\) is the commutator of \(\Gamma\). Indeed, if \(c\colon \Z^k\times\Z^k\to \pi_1(G)\) is the cocycle associated to the extension \(\Gamma\), then its corresponding value \(\tilde{c}\in\Ant_k(\pi_1(G))\) is \(\tilde{c}_{ij} = c(e_i,e_j)-c(e_j,e_i)\) for every \(i,j\leq k\). The function \(c(a,b)-c(b,a)\) is the antisymmetrization of \(c\), which can be seen is equal to \([\tilde{a},\tilde{b}]\) where \(\tilde{a},\tilde{b}\in \Gamma\) are lifts of \(a,b\).

The non-zero values of this correspondence give a lower bound on the number of path-connected components of \(\Hom(\Z^k,G)\). To get such bound, we will study the space of almost commuting tuples in \(\tilde{G}\), that is, for a central subgroup \(Z\subset \tilde{G}\) we will consider
\[B_k(\tilde{G},Z) = \{(x_1,...,x_k)\in\tilde{G}^k\;|\;[x_i,x_j]\in Z\text{ for all }i,j\}\,,\]
where \(Z\) is the discrete subgroup such that \(\tilde{G}/Z\cong G\), and in particular \(Z\cong\pi_1(G)\). The iterated product of the universal cover \(\pi\colon \tilde{G}\to G\) yields the commutative square
\[
\xymatrix{B_k(\tilde{G},Z)\ar@{^(->}[r]\ar[d]_{\pi^k|}&\tilde{G}^k\ar[d]^{\pi^k}\\
\Hom(\Z^k,G)\ar@{^(->}[r]&G^k\,,}
\]
and it is not hard to see that it is in fact a pullback square. Hence the left vertical arrow, \({\pi^k}|\), is a principal \(Z^k\)-bundle. The connected components \(\Hom(\Z^k,G)\) will be quotients by \(Z^k\) of components of \(B_k(\tilde{G},Z)\), but in general there is no 1--1 correspondence. But notice that the map of ordered commutators defined by
\begin{align*}
\psi\colon B_k(\tilde{G},Z)&\to \Ant_k(Z)\\ 
(x_1,...,x_k)&\mapsto ([x_i,x_j])
\end{align*}
factors as \(\pi^k|\) followed by the assignment \(\rho\to\tilde{\rho}\). Thus, if the composite 
 \[B_k(\tilde{G},Z)\xrightarrow{\pi^k|}\Hom(\Z^k,G)\to \Ant_k(Z)\,,\] 
is injective in \(\pi_0\), the components of \(\Hom(\Z^k,G)\) will be precisely the ones of \(B_k(\tilde{G},Z)\) modulo \(Z^k\). As $Z$ is a central subgroup of $\widetilde{G}$ and $\pi^k|$ is a covering map, we also have that the assignment \(\rho\to\tilde{\rho}\) is invariant under conjugation by elements of $G$. Then we also have a well defined map for the representation space
\[\Rep(\Z^k,G)\to H^2(\Z^k,\pi_1(G))\,.\]

\subsection{Linearization of almost commuting elements in nilpotent Lie groups}

Suppose \(G\) is a simply-connected nilpotent Lie group over $\R$ or over $\C$, with Lie algebra \(\g\). It is well known that in this situation the exponential map \(\exp\colon G\to\g\) is a diffeomorphism \cite[Theorem 1.127]{KnappLieGroups}. Thus we may therefore assume that every element in \(G\) is of the form \(\exp(x)\) for a unique \(x\) in \(\g\).

\begin{lemma}\label{lem:center-center}
Let \(G\) be a simply-connected nilpotent Lie group with Lie algebra \(\g\). Then the restriction
 \[\exp|_{Z(\g)}\colon Z(\g) \xrightarrow{\cong} Z(G)\] 
 is an isomorphism of abelian groups.
\end{lemma}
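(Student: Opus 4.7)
The plan is to show separately that $\exp|_{Z(\g)}$ lands in $Z(G)$, is a group homomorphism, and is a bijection onto $Z(G)$. Since the unrestricted exponential is a diffeomorphism $\g \xrightarrow{\cong} G$, injectivity of the restriction comes for free and surjectivity reduces to showing $\exp^{-1}(Z(G)) \subseteq Z(\g)$.

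First I would check the easy directions. If $x \in Z(\g)$, then $[x,y]=0$ for every $y \in \g$, so the Baker--Campbell--Hausdorff formula collapses and gives $\exp(x+y)=\exp(x)\exp(y)=\exp(y)\exp(x)$. Since every element of $G$ is of the form $\exp(y)$, this simultaneously shows that $\exp(x)\in Z(G)$ and that $\exp|_{Z(\g)}$ is a homomorphism between the abelian groups $(Z(\g),+)$ and $Z(G)$.

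The nontrivial step is to reverse the first inclusion: if $\exp(x) \in Z(G)$, then $x \in Z(\g)$. For this I would use the compatibility of the adjoint actions,
\[
\exp(x)\exp(y)\exp(x)^{-1} \;=\; \exp\bigl(\Ad(\exp(x))(y)\bigr)\;=\;\exp\bigl(e^{\ad(x)}(y)\bigr),
\]
together with the injectivity of $\exp$, to conclude that $e^{\ad(x)}(y)=y$ for every $y\in \g$, i.e.\ $e^{\ad(x)} = \mathrm{id}_{\g}$. Here is where nilpotency enters: in a nilpotent Lie algebra $\ad(x)$ is a nilpotent operator, so writing $N=\ad(x)$ with $N^m=0$ we get $e^N - I = N\bigl(I + \tfrac{N}{2!}+\cdots+\tfrac{N^{m-2}}{(m-1)!}\bigr)$, and the factor in parentheses is unipotent, hence invertible. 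Therefore $e^N=I$ forces $N=0$, so $\ad(x)=0$ and $x\in Z(\g)$.

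The main obstacle is exactly this last nilpotency argument — the surjectivity onto $Z(G)$ is the only place where the nilpotent hypothesis is really used, and without it one could have $\exp(x)$ central while $x$ is only an eigenvector of $\ad$-type behavior. Once this point is in hand the statement follows by combining the three steps: inclusion $\exp(Z(\g))\subseteq Z(G)$, the homomorphism property, and the reverse inclusion via $\ad$-nilpotency; bijectivity then upgrades the homomorphism to the claimed isomorphism of abelian groups.
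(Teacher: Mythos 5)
Your proof is correct, and it takes a genuinely different route from the paper's. For the hard direction (if $\exp(x)\in Z(G)$ then $x\in Z(\g)$), the paper applies BCH directly to $[\exp(x),\exp(y)]$ to deduce that $[x,y]$ lies in the third term $[L,[L,L]]$ of the lower central series of the subalgebra $L=\langle x,y\rangle$; since $[L,L]$ is the ideal generated by $[x,y]$, this forces $[L,L]=[L,[L,L]]$, and nilpotency of $L$ then kills $[L,L]$ entirely. You instead use the naturality identity $\exp(x)\exp(y)\exp(x)^{-1}=\exp\bigl(e^{\ad(x)}(y)\bigr)$, combine it with injectivity of $\exp$ to get $e^{\ad(x)}=\mathrm{id}$, and finish with the clean linear algebra fact that a nilpotent operator $N$ with $e^N=I$ must vanish, because $e^N-I=N\cdot(\text{unipotent})$. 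Both are short, but yours isolates the nilpotency hypothesis into a single transparent linear-algebra step and avoids any reasoning about the structure of the lower central series of a two-generated subalgebra, at the cost of invoking the $\Ad$--$\ad$ compatibility. The paper's argument stays closer to BCH itself, which pays off because the immediately following Proposition \ref{prop:AC-LieGrp-LieAlg} reuses the same BCH-based subalgebra technique; your approach would work equally well here but would not set up that later argument.
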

\begin{proof}
By the exponential map properties, it is clear that  the restriction of the exponential map to \(Z(\g)\) is an injective homomorphism. Since \(\exp\) is surjective we have \(\exp(Z(\g))\subset Z(G)\). To see that \(\exp|_{Z(\g)}\) surjects onto \(Z(G)\), first note that for every \(x,y\in \g\), the Baker-Campbell-Hausdorff formula (BCH) implies that
 \[[\exp (x),\exp (y)] = \exp([x,y]+z),\] 
where \(z\) is a finite linear combination of nested brackets in \(x\) and \(y\), each of them of length at least 3. Suppose \(\exp(x)\in Z(G)\). Then \( [\exp (x),\exp (y)] = 1\) and thus \([x,y] = -z\). We may conclude that if \(L := \langle x,y \rangle \) is the Lie subalgebra of \(\g\) generated by \(x\) and \(y\), then \([x,y]\in [L,[L,L]]\), but since \([x,y]\) generates \([L,L]\) we further obtain that \([L,[L,L]]=[L,L]\). Now \(L\) is a subalgebra of a nilpotent algebra which makes it also nilpotent, and then the previous equality implies \([L,L]=0\). As this holds for any \(y\in \g\), it follows that \(x\) is central in \(\g\).
\end{proof}

For a Lie algebra \(L\), we may consider the space of almost commuting elements with respect to the Lie bracket. That is, for an abelian subgroup of the center of \(L\), \(A\subset Z(L)\), we define
\[B_k(L,A) = \{(x_1,...,x_k)\in L^k : [x_i,x_j]\in A\text{ for every }i,j\}\,.\]

Lemma \ref{lem:center-center} tells us that for any central subgroup \(A\subset Z(\g)\) the exponential map is an \(A^k\)-equivariant diffeomorphism \(\g^k\to G^k\) where the \(A^k\)-action on \(G^k\) is the one induced by the isomorphism \(\exp|_{Z(\g)}\). This will allow us to stablish an easy dictionary from almost commuting tuples in Lie algebras to Lie groups.

\begin{proposition}\label{prop:AC-LieGrp-LieAlg}
Let \(G\) be a simply-connected nilpotent Lie group with Lie algebra \(\g\). Let \(A\subset Z(\g)\) be an abelian subgroup. Then the exponential map induces an \(A^k\)-equivariant homeomorphism
\[B_k(\g,A) \xrightarrow[\exp]{\cong} B_k(G,\exp(A))\,.\]
\end{proposition}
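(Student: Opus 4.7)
The plan is to verify that the already-known diffeomorphism $\exp^k\colon \g^k \to G^k$ (which exists because $G$ is simply-connected nilpotent) restricts to an $A^k$-equivariant bijection between $B_k(\g, A)$ and $B_k(G, \exp(A))$; the homeomorphism statement then follows automatically, since a bijective restriction of a homeomorphism to subspaces is itself a homeomorphism for the subspace topologies.

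Equivariance is immediate from the Baker--Campbell--Hausdorff formula. The $A^k$-action on $\g^k$ is componentwise addition, while on $G^k$ it is componentwise left multiplication, with $A$ and $\exp(A)$ identified via Lemma \ref{lem:center-center}. Equivariance thus reduces to the identity $\exp(a + x) = \exp(a)\exp(x)$ for $a \in Z(\g)$, which holds because every BCH correction term involves $[a, \cdot]$ and hence vanishes.

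The substantive content is the bijection, which amounts to showing that $[x_i, x_j] \in A$ for all $i, j$ if and only if $[\exp(x_i), \exp(x_j)] \in \exp(A)$ for all $i, j$. Writing $[\exp(x_i), \exp(x_j)] = \exp([x_i, x_j] + z_{ij})$ as in Lemma \ref{lem:center-center}, where $z_{ij}$ is a sum of nested brackets of length at least three in $x_i, x_j$, the forward direction is direct: if $[x_i, x_j] \in A \subset Z(\g)$, the Jacobi identity rewrites every such nested bracket in a form containing $[x_i, x_j]$ two levels deep, and centrality forces it to $0$. Hence $z_{ij} = 0$ and the commutator equals $\exp([x_i, x_j]) \in \exp(A)$.

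The reverse direction is the step I expect to be the main obstacle, and it uses a relative version of the nilpotency trick from Lemma \ref{lem:center-center}. Suppose $[\exp(x_i), \exp(x_j)] = \exp(a_{ij})$ with $a_{ij} \in A$. Injectivity of $\exp$ gives $[x_i, x_j] + z_{ij} = a_{ij} \in Z(\g)$. Set $L = \langle x_i, x_j \rangle$ with its lower central series $L \supset L^2 \supset L^3 \supset \cdots$; then $z_{ij} \in L^3$, and applying $\ad(y)$ for $y \in L$ to the central equation yields $[y, [x_i, x_j]] = -[y, z_{ij}] \in L^4$. Since $L^2 / L^3$ is spanned by the class of $[x_i, x_j]$, this forces $L^3 \subset L^4$, hence $L^3 = L^n$ for all $n \geq 3$, and nilpotency of $L$ (inherited as a subalgebra of $\g$) then gives $L^3 = 0$. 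Therefore $z_{ij} = 0$ and $[x_i, x_j] = a_{ij} \in A$, completing the bijection.
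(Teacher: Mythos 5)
Your argument is correct and matches the paper's proof in its essentials: both reduce the bijection to showing that $[x,y]\in A$ iff $[\exp x,\exp y]\in\exp(A)$ via BCH, and both handle the reverse direction by forcing $L^3=L^4$ in the subalgebra $L=\langle x,y\rangle$ and invoking nilpotency to conclude $L^3=0$, hence $z_{ij}=0$. The only cosmetic differences are that you phrase the forward direction as a direct vanishing of $z_{ij}$ (the paper instead writes the reduced BCH product $\exp(x)\exp(y)=\exp(x+y+[x,y]/2)$) and you verify equivariance explicitly, whereas the paper gets it for free by defining the $A^k$-action on $G^k$ through $\exp|_{Z(\g)}$.
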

\begin{proof}

Let \(K=\exp(A)\). It suffices to show that under these hypotheses a bracket \([x,y]\in A\) if and only if the commutator \([\exp (x),\exp(y)]\in K\).

First, let us assume that \([x,y]\in A\). Then BCH reads as \(\exp (x) \exp (y) = \exp {(x + y + [x,y]/2)}\), and in particular \([\exp (x),\exp (y)]=\exp {([x,y])} \in K\). Now suppose \([\exp (x),\exp (y)]\in K\). We already know that \([\exp (x),\exp (y)] = \exp({[x,y] + z})\) where \(z\) is a finite combination of nested brackets in \(x\) and \(y\), each of them of length at least 3, and by Lemma \ref{lem:center-center} it follows that \([x,y] + z\in A\). We now proceed similarly as in the proof of Lemma \ref{lem:center-center}: let \(L := \langle x,y \rangle \) be the Lie subalgebra of \(\g\) generated by \(x\) and \(y\). Applying the operators \([x,-]\) and \([y,-]\) to \([x,y]+z\) we obtain that \([x,[x,y]]\) and \([y,[x,y]]\) are in \([L,[L,[L,L]]]]\), so that \([L,[L,[L,L]]]=[L,[L,L]]\). As \(L\) is nilpotent we further obtain that \([L,L]\) is 2-step nilpotent. In particular all nested brackets in \(x\) and \(y\) of length at least 3 vanish, hence \(z=0\). It follows that \(\exp([x,y])\in K\), as desired.
\end{proof}

In particular we have a homeomorphism 
\begin{align}
B_k(\g,A)/A^k\xrightarrow[\exp]{\cong}B_k(G,\exp(A))/A^k\cong \Hom(\Z^k,G/\exp(A))\,.\label{hom&ComElLieAlg}
\end{align}

\begin{corollary}\label{Cor:HomSimpConn}
Let \(G\) be a simply-connected nilpotent Lie group. Then \(\Hom(\Z^k,G)\) is contractible for every \(k\).
\end{corollary}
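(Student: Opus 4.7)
The plan is to reduce the statement to a linear-algebraic fact about the Lie algebra via Proposition \ref{prop:AC-LieGrp-LieAlg}, and then exhibit an explicit linear contraction.

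First, I would apply Proposition \ref{prop:AC-LieGrp-LieAlg} to the trivial central subgroup $A = 0 \subset Z(\g)$. Then $\exp(A) = \{1\}$ and $B_k(G,\{1\})$ is exactly $\Hom(\Z^k,G)$, so the proposition gives a homeomorphism
\[
\exp \colon B_k(\g,0) \xrightarrow{\cong} \Hom(\Z^k,G),
\]
where $B_k(\g,0) = \{(x_1,\dots,x_k)\in \g^k : [x_i,x_j]=0 \text{ for all }i,j\}$ is the space of commuting $k$-tuples in $\g$ with respect to the Lie bracket. Passing through this identification, it suffices to prove that $B_k(\g,0)$ is contractible.

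Next I would observe that $B_k(\g,0)$ is star-shaped at the origin. Indeed, by bilinearity of the Lie bracket, if $[x_i,x_j]=0$ for all $i,j$, then $[tx_i,tx_j]=t^2[x_i,x_j]=0$ for every $t\in[0,1]$. Hence the straight-line homotopy
\[
H\colon B_k(\g,0)\times[0,1]\to B_k(\g,0),\qquad H((x_1,\dots,x_k),t)=(tx_1,\dots,tx_k),
\]
is well defined and continuous, and provides a deformation retraction of $B_k(\g,0)$ onto the point $(0,\dots,0)$.

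There is essentially no obstacle here: all the work has already been done by Lemma \ref{lem:center-center} and Proposition \ref{prop:AC-LieGrp-LieAlg}, which translate the group-theoretic commutation condition into a Lie-algebraic one where the bracket's bilinearity trivially produces a contraction. The only point worth being a little careful about is confirming that taking $A=0$ is actually allowed in Proposition \ref{prop:AC-LieGrp-LieAlg} and yields $\Hom(\Z^k,G)$ on the group side --- which it does, since $G$ is simply-connected nilpotent so $\exp$ is a global diffeomorphism and no quotient is needed.
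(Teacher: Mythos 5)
Your proof is correct and is essentially the same as the paper's: both specialize Proposition \ref{prop:AC-LieGrp-LieAlg} to $A=\{0\}$ and observe that $B_k(\g,\{0\})$ is star-shaped at the origin (the paper says ``star-convex'') and hence contractible. You have simply spelled out the straight-line homotopy explicitly.
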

\begin{proof}
Specializing Proposition \ref{prop:AC-LieGrp-LieAlg} to \(A=\{0\}\), on one side we obtain \(\Hom(\Z^k,G)\) and on the other the space \(B_k(\g,\{0\})\) which is star-convex, hence contractible.  
\end{proof}

\begin{remark}
Corollary \ref{Cor:HomSimpConn} and the equivalence of categories between finite dimensional nilpotent Lie algebras and unipotent algebraic groups over a field of characteristic 0 \cite[IV-Corollaire 4.5]{DemazureGabriel}, expands on Pettet and Souto's deformation retraction, to include unipotent algebraic groups over \(\R\) or \(\C\), as in all these algebraic groups the maximal compact subgroup of the group of complex or real points is the trivial subgroup.
\end{remark}

\begin{remark}\label{rem:G-action-on-ComLieAlg}
A connected nilpotent Lie group can always be expressed as a quotient \(G/\exp(A)\), where \(G\) is a universal cover and \(A\) is a discrete central subgroup of \(\g\). Recall that the representation space \(\Rep(\Z^k,G/\exp(A))\) is the orbit space of \(\Hom(\Z^k,G/\exp(A)) \) under the conjugation action of \(G/\exp(A)\), but since conjugation by elements of \(\exp(A)\) is the identity, it is the same orbit space under the full conjugation action of \(G\). Let \(\Ad_g\colon G\to G\) be the conjugation map by \(g\in G\), and let \(\ad(x) = [x,-]\colon \g\to\g\). Since the exponential map is natural one can verify that 
\[\Ad_{\exp(x)}(\exp(y)) = \exp\left(\sum_{n=0}^\infty\ad(x)^n(y)/n!\right)\]
(in fact since \(\g\) is nilpotent the sum is actually finite). We can define a linear \(G\)-action on \(\g\) via 
\begin{align}
\exp(x)\cdot y := \sum_{n=0}^\infty\ad(x)^n(y)/n!\,.\label{G-action on Lie}
\end{align} 
Under this action the \(A^k\)-equivariant homeomorphism of Proposition \ref{prop:AC-LieGrp-LieAlg} becomes \(G\)-equivariant. Now, as \(A\) acts additively,  \(G\) acts linearly over \(\g\) and trivially over \(A\), we conclude that the actions commute, so at the level of orbit spaces we have homeomorphisms 
\begin{align}
(B_k(\g,A)/G)/A^k\cong(B_k(\g,A)/A^k)/G\xrightarrow[\exp]{\cong}\Rep(\Z^k,G/\exp(A))\,.\label{rep&ComElLieAlg}
\end{align}
 An immediate consequence of the linearity of the \(G\)-action over its Lie algebra \(\g\) is that the contraction of Corollary \ref{Cor:HomSimpConn} is preserved after taking quotients, as it is attained via a dilation. Hence
 \(\Rep(\Z^k,G)\) is contractible, as well. 
\end{remark}

\subsection{Almost commuting elements in \(\st_{n+2}(\F)\)}

The proof of Proposition \ref{prop:AC-LieGrp-LieAlg} already suggests that commuting elements are represented by 2-step nilpotent Lie subalgebras whose center lies in the center of the Lie algebra. When the values of the Lie bracket of all these Lie subalgebras depend on a common subalgebra, one may be able to deform them into a single one. This is the case of the Lie algebra of strict upper triangular matrices.

Let \(\st_m(\F)\) denote the Lie algebra of strict upper triangular \(m\times m\) matrices with coefficients in \(\F\). We can write a matrix in \(\st_{n+2}(\F)\) as
\begin{align}
\st_{n+2}(a,b,x,z): = \left(
\begin{matrix}
0 & a & &z\\
& \ddots  & x &\\
&& \ddots & b \\
& &&0
\end{matrix}
\right)\label{strict-upper}
\end{align}
where \(a\in \Mat_{1\times n}(\F)\), \(b\in \Mat_{n\times 1}(\F)\), \(x\in \st_n(\F)\) and \(z\in \F\). In this notation the center of \(\st_n(\F)\) is precisely 
\[\{\st_{n+2}(0,0,0,z):z\in\F\}\cong \F.\]
 Let \(\h_{2n+1}(\F)\) denote the \(2n+1\) dimensional Heisenberg algebra over \(\F\). As a Lie subalgebra of \(\st_{n+2}(\F)\), we can write \(\h_{2n+1}(\F)\) as the matrices in \(\st_{n+2}(\F)\) with coordinate \(x=0\). Under this identification we have \(Z(\h_{2n+1}(\F)) = Z(\st_n(\F))\).

\begin{proposition}\label{prop:defretrank1}
Let \(A\subset \F\) be an abelian subgroup. Then the inclusion 
\[B_k(\h_{2n+1}(\F),A)\xhookrightarrow{\simeq} B_k(\st_{n+2}(\F),A)\] admits an \(A^k\)-equivariant deformation retract.
\end{proposition}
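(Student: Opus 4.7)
The approach is to use the straight-line homotopy that linearly scales the middle block $x$ to zero, and check that this stays inside $B_k(\st_{n+2}(\F),A)$ throughout.

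First I would unwind what membership in $B_k(\st_{n+2}(\F),A)$ means in block form. Writing $X_i=\st_{n+2}(a_i,b_i,x_i,z_i)$ and carrying out the block multiplication
\[
X_iX_j=\begin{pmatrix} 0 & a_ix_j & a_ib_j\\ 0 & x_ix_j & x_ib_j\\ 0 & 0 & 0\end{pmatrix},
\]
the bracket $[X_i,X_j]$ lies in the center (and so the condition ``$[X_i,X_j]\in A$'' just restricts the scalar $(1,n+2)$-entry) if and only if the three blockwise equations
\[
[x_i,x_j]=0,\qquad a_ix_j=a_jx_i,\qquad x_ib_j=x_jb_i
\]
hold for every $i,j$, together with $a_ib_j-a_jb_i\in A$.

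Next I would define the homotopy
\[
H\colon[0,1]\times B_k(\st_{n+2}(\F),A)\to B_k(\st_{n+2}(\F),A),\quad H_t(X_i)_i=\bigl(\st_{n+2}(a_i,b_i,(1-t)x_i,z_i)\bigr)_i.
\]
The crucial observation is that each of the three blockwise conditions above is homogeneous of positive degree in the $x$-entries (degree $2$ in the first, degree $1$ in the other two), while the scalar condition $a_ib_j-a_jb_i\in A$ does not involve the $x_i$ at all. Consequently, if the conditions hold at $t=0$, they hold at every $t$; at $t=1$ the middle blocks vanish so we land in $B_k(\h_{2n+1}(\F),A)$; and the map is clearly the identity on that subspace.

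For $A^k$-equivariance, recall that the $A^k$-action only modifies the $z_i$ coordinates (translation by central elements), and the homotopy $H_t$ leaves all $a_i,b_i,z_i$ fixed, so it commutes with the action. Continuity is immediate from the continuity of scalar multiplication.

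There is no serious obstacle here: once the blockwise description of $B_k(\st_{n+2}(\F),A)$ is written down, the homogeneity of the defining equations in $x$ does all the work, and the argument reduces to a direct check. The only slightly delicate point is verifying that the scalar ``central'' entry of $[X_i,X_j]$ truly is $A^k$-invariant along the deformation, which it is simply because $H_t$ does not alter $a_i$ or $b_i$.
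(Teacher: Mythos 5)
Your proof is correct and follows essentially the same approach as the paper: both use the linear homotopy scaling the $\st_n(\F)$-block $x_i$ to zero, observe from the blockwise formula for the bracket that membership in $B_k(\st_{n+2}(\F),A)$ forces the non-central blocks of $[X_i,X_j]$ to vanish (conditions preserved under scaling $x$), and note the scalar condition $a_ib_j - a_jb_i\in A$ is untouched. The equivariance argument is likewise identical, since the homotopy fixes the $z$-coordinates on which $A^k$ acts.
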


\begin{proof}
Define \(H\colon \st_{n+2}(\F)^k\times I\to \st_{n+2}(\F)^k\) as the linear homotopy that multiplies each coordinate \(x\) of the \(k\)-tuple by \(t\), for every \(t\in I\), and is the identity elsewhere. This is the obvious linear retraction of \(\st_{n+2}(\F)\) into \(\h_{2n+1}(\F)\). Now given a pair of matrices \(\st_{n+2}(a,b,x,z)\) and \(\st_{n+2}(a^\prime,b^\prime,x^\prime,z^\prime)\) in \(\st_{n+2}(\F)\), its Lie bracket is given by
\begin{align}
[\st_{n+2}(a,b,x,z),\st_{n+2}(a^\prime,b^\prime,x^\prime,z^\prime)] = \st_{n+2}(ax^\prime-a^\prime x,x^\prime b-xb^\prime,[x,x^\prime],ab^\prime-a^\prime b)\,.\label{bracket-sut}
\end{align}
 But notice that if (\ref{bracket-sut}) lies in \(A\subset Z(\st_{n+2}(\F))\), then 
\begin{align*}
ax^\prime-a^\prime x &= 0\\
x^\prime b-xb^\prime &= 0\\
[x,x^\prime] &= 0
\end{align*}
and \(ab^\prime-a^\prime b\in A\). Clearly all these expressions are preserved by the linear retraction, and therefore the restriction of \(H\) to \(B_k(\st_{n+2}(\F),A)\times I\) gives the desired deformation retraction. It remains to verify that \(H\) is \(A^k\)-equivariant, but this is clear since the linear retraction is the identity over the coordinate of the center.
\end{proof}

 Let \(\widetilde{\U}_{n}(\F)\) be the reduced group of $n\times n$ upper unitriangular matrices and let \(\widetilde{\He}_{2n+1}(\F)\) be the reduced Heisenberg group of dimension $2n+1$ defined in the introduction. Proposition \ref{prop:defretrank1} and naturality of homeomorphism (\ref{hom&ComElLieAlg}) yield:

\begin{corollary}\label{cor: hom-equiv-comm-elem-Hien-Upper}
 The inclusion 
\[\Hom(\Z^k,\widetilde{\He}_{2n+1}(\F))\xhookrightarrow{\simeq}\Hom(\Z^k,\widetilde{\U}_{n+2}(\F))\]
admits a deformation retract.
\end{corollary}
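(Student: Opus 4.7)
The plan is to lift the Lie-algebra-level deformation retraction of Proposition \ref{prop:defretrank1} to the Lie group level via the exponential map, and then descend it to the $A^k$-quotient using the identification (\ref{hom&ComElLieAlg}).

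First, I would specialize Proposition \ref{prop:defretrank1} to the discrete abelian subgroup $A \subset \F = Z(\st_{n+2}(\F)) = Z(\h_{2n+1}(\F))$ whose quotient defines the reduced groups---so $A \cong \Z$ in the real case and $A \cong \Z \oplus \Z$ in the complex case. This yields an explicit $A^k$-equivariant strong deformation retraction of $B_k(\st_{n+2}(\F), A)$ onto $B_k(\h_{2n+1}(\F), A)$, namely the linear homotopy that scales the $x$-coordinate to zero.

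Second, I would transport this deformation retraction to the group level. Both $\U_{n+2}(\F)$ and $\He_{2n+1}(\F)$ are simply-connected and nilpotent, so Proposition \ref{prop:AC-LieGrp-LieAlg} produces $A^k$-equivariant homeomorphisms $B_k(\st_{n+2}(\F), A) \xrightarrow{\cong} B_k(\U_{n+2}(\F), \exp(A))$ and $B_k(\h_{2n+1}(\F), A) \xrightarrow{\cong} B_k(\He_{2n+1}(\F), \exp(A))$, both induced by $\exp$. They are compatible with the inclusions because $\He_{2n+1}(\F)$ is the Lie subgroup of $\U_{n+2}(\F)$ associated to the subalgebra $\h_{2n+1}(\F)$, and both exponentials are restrictions of the matrix exponential. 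Conjugating the homotopy from the first step through these homeomorphisms produces an $A^k$-equivariant deformation retraction of $B_k(\U_{n+2}(\F), \exp(A))$ onto $B_k(\He_{2n+1}(\F), \exp(A))$.

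Third, I would descend to the $A^k$-quotient. The naturality of the identification (\ref{hom&ComElLieAlg}) gives $B_k(\U_{n+2}(\F), \exp(A))/A^k \cong \Hom(\Z^k, \widetilde{\U}_{n+2}(\F))$, and similarly on the Heisenberg side, in a manner compatible with the subgroup inclusion. Since the deformation retraction is $A^k$-equivariant, it descends to a deformation retraction on the quotients. I do not anticipate any real obstacle: once the central subgroup is correctly identified with the copy of $\Z$ or $\Z \oplus \Z$ defining the reduced groups, the only thing to check is that the $A^k$-equivariant exponential diffeomorphism intertwines the subalgebra inclusion with the subgroup inclusion, and this is immediate from the matrix structure.
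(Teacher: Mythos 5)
Your proposal is correct and is exactly the argument the paper intends: the paper's one-line proof ``Proposition \ref{prop:defretrank1} and naturality of homeomorphism (\ref{hom\&ComElLieAlg}) yield:'' compresses precisely the three steps you spell out (apply the $A^k$-equivariant linear retraction at the Lie algebra level, transport through the exponential homeomorphisms of Proposition \ref{prop:AC-LieGrp-LieAlg} compatibly with the subalgebra/subgroup inclusions, then pass to $A^k$-orbits via (\ref{hom\&ComElLieAlg})). You have simply unpacked the same argument at a finer level of detail.
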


\subsection{A decomposition for almost commuting elements in finite dimensional Lie algebras}

If \(G\) is a simply-connected Lie group with Lie algebra \(\g\) we may deduce from the first part of the proof of Proposition \ref{prop:AC-LieGrp-LieAlg} that there is a commutative diagram
\[
\xymatrix{
B_k(\g,A)\ar[r]^-{\exp^k|}\ar[d]_{\psi^{\prime}}&B_k(G,\exp(A))\ar[d]^\psi\\
\Ant_k(A)\ar[r]^-{\exp^{k^2}|}&\Ant_k(\exp(A))
}
\]
where \(\psi^\prime(x_1,...,x_k) = ([x_i,x_j])\). 

We can also make sense of the map \(\psi^{\prime}\colon B_k(\g,A)\to\Ant_k(A)\) for an arbitrary finite dimensional Lie algebra $\g$ over \(\F\). First notice that it is the restriction of a map \(\g^k\to \Ant_k([\g,\g])\), and consider the non-degenerate alternating bilinear map \(\omega\colon(\g/Z(\g))\times(\g/Z(\g))\to [\g,\g]\) induced by the Lie bracket on \(\g\). Then we have a map
\begin{align}
\varphi\colon(\g/Z(\g))^k&\to \Ant_k([\g,\g])\label{varphi}\\
(\overline{x}_1,...,\overline{x}_k)&\mapsto (\omega(x_i,x_j))\nonumber\,,
\end{align} 
which gives the following splitting.

\begin{lemma}\label{upper-bound-AlmCom}
Let \(A\subset Z(\g)\cap[\g,\g]\). Consider the \(A^k\)-actions over \(B_k(\g,A)\) given by addition on each coordinate, and over \((\g/Z(\g))^k\times(Z(\g))^k\) given by the trivial action on the first factor and by addition on the second factor. Then there is an \(A^k\)-equivariant homeomorphism 
 \[B_k(\g,A)\xrightarrow{\cong}\varphi^{-1}(\Ant_{k}(A))\times(Z(\g))^k\,.\]
\end{lemma}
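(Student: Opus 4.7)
The strategy is to use a vector-space splitting of the short exact sequence
\[
0 \to Z(\g) \to \g \to \g/Z(\g) \to 0,
\]
and observe that the defining condition of $B_k(\g,A)$ only sees the classes modulo $Z(\g)$.

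More precisely, I would first record the elementary observation that the Lie bracket on $\g$ factors through $\g/Z(\g)$: if $x,y\in\g$ and $x',y'$ differ from $x,y$ by central elements, then $[x,y]=[x',y']$. Hence the induced alternating bilinear map $\omega\colon(\g/Z(\g))\times(\g/Z(\g))\to[\g,\g]$ is well defined and satisfies $[x,y]=\omega(\overline{x},\overline{y})$. In particular, $(x_1,\dots,x_k)\in B_k(\g,A)$ if and only if $(\overline{x}_1,\dots,\overline{x}_k)\in\varphi^{-1}(\Ant_k(A))$.

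Next, since $\g$ is finite dimensional over $\F$, the quotient map $\g\to\g/Z(\g)$ admits a linear section $s\colon\g/Z(\g)\to\g$. Using it, define
\[
\Phi\colon B_k(\g,A)\to \varphi^{-1}(\Ant_k(A))\times Z(\g)^k,\qquad (x_1,\dots,x_k)\mapsto\bigl((\overline{x}_1,\dots,\overline{x}_k),(x_1-s(\overline{x}_1),\dots,x_k-s(\overline{x}_k))\bigr),
\]
with evident continuous inverse $((\overline{x}_i),(z_i))\mapsto(s(\overline{x}_i)+z_i)_{i=1}^k$. The observation in the previous paragraph guarantees $\Phi$ lands in $\varphi^{-1}(\Ant_k(A))\times Z(\g)^k$, and that the inverse lands back in $B_k(\g,A)$, so $\Phi$ is a homeomorphism.

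Finally, to verify $A^k$-equivariance, recall that on $B_k(\g,A)$ an element $(a_1,\dots,a_k)\in A^k$ acts by adding $a_i$ to the $i$-th coordinate (this is well defined because $A\subset Z(\g)$ and $[x_i+a_i,x_j+a_j]=[x_i,x_j]\in A$). Since $s$ is linear and $a_i\in Z(\g)$, we have $\overline{x_i+a_i}=\overline{x}_i$ and $(x_i+a_i)-s(\overline{x_i+a_i})=(x_i-s(\overline{x}_i))+a_i$, exactly matching the trivial action on $(\g/Z(\g))^k$ and the additive action on $Z(\g)^k$. No real obstacle arises; the only mild point worth spelling out is that the condition cutting out $\varphi^{-1}(\Ant_k(A))$ from $(\g/Z(\g))^k$ is genuinely equivalent to the one cutting out $B_k(\g,A)$, which is exactly what the factorization of the bracket through $\omega$ provides.
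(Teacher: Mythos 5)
Your proof is correct and follows essentially the same route as the paper: both arguments choose a linear splitting of the short exact sequence $0\to Z(\g)\to\g\to\g/Z(\g)\to 0$ (the paper via an inner product and orthogonal projections, you via a linear section $s$) to produce a linear isomorphism $\g^k\cong(\g/Z(\g))^k\times Z(\g)^k$, and then observe that the defining condition on $B_k(\g,A)$ depends only on the classes modulo $Z(\g)$, so the restriction lands in $\varphi^{-1}(\Ant_k(A))\times Z(\g)^k$. You spell out the equivariance and the factorization of the bracket through $\omega$ a bit more explicitly, but the underlying idea is identical.
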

\begin{proof}
Given an inner product in \(\g\), the orthogonal projections \(p\colon \g\to Z(\g)\) and \(q\colon \g\to \ker p\) induce a decomposition \(\ \g \xrightarrow{\cong}(\g/Z(\g))\times Z(\g)\) that takes \(x+a\mapsto (q(x), p(x)+a)\) for every  \(a\in Z(\g)\). Hence it induces an \(A^k\)-equivariant linear isomorphism  \(\g^k\xrightarrow{\cong} (\g/Z(\g))^k\times Z(\g)^k\). The restriction to \(B_k(\g,A)\) yields the desired isomorphism.
\end{proof}

We would like to know if the non-empty fibers \(\varphi^{-1}(B)\) over \(B\in\Ant_k(A)\) are connected, in order to describe the path-connected components of \(B_k(\g,A)\). Let us describe the map \(\varphi\) in a little more generality. Suppose \(V\) and \(Z\) are vector spaces over \(\F\), and let \(\omega\colon V\times V \to Z\) be a skew-symmetric bilinear map. We say that \(\omega\) is \emph{non-degenerate} if its adjoint map \(\omega^{\adj}\colon V\to \Hom(V,Z)\) is injective. Note that since \(\omega\) is skew-symmetric, the injectivity of \(\omega^{\adj}\) does not depend on whether we choose the right or left evaluation. Let \(\Omega^{\mathrm{skew}}(U,Z)\) be the space of skew-symmetric bilinear maps \(\beta\colon U\times U\to Z\). Consider the map
\begin{align}
\varphi_{U,V}\colon\Hom(U,V)&\to \Omega^{\mathrm{skew}}(U,Z)\label{varphi_U}\\
f&\mapsto f^*\omega\,.\nonumber
\end{align} 
A matrix \(B\in \Ant_k(Z)\) is equivalent to a skew-symmetric bilinear map \(\beta\colon \F^k\times \F^k\to Z\), and since every element in \(V^k\) is equivalent to a linear map \(\F^k\to V\), it follows that (\ref{varphi}) is the map above specialized to \(U=\F^k\), \(V=\g/Z(\g)\), \(Z=[\g,\g]\), and \(\omega\) the bilinear map determined by the Lie bracket on \(\g\). 

For an arbitrary \(U\), the fibers of \(\varphi_U\) are then
\[\varphi^{-1}_{U,V}(\beta) = \{f\in\Hom(U,V)\;|\; f^*\omega = \beta\}\,.\]

First we note that the fibers are invariant under automorphisms of \(U\), up to homeomorphism. We record this in the next lemma but the proof is straightforward by definition of \(\varphi_{U,V}\).
\begin{lemma}\label{lem:varphi-equivariant}
Consider the right \(\GL(U)\)-actions over \(\Hom(U,V)\) and \(\Omega^{\mathrm{skew}}(U,Z)\) given by pulling back along automorphisms of \(U\). Then the map \(\varphi_{U,V}\) is \(\GL(U)\)-equivariant. 
In particular, all fibers of \(\varphi_{U,V}\) over a \(\GL(U)\)-orbit are homeomorphic.
\end{lemma}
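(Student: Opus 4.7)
The statement is essentially a functoriality check, so the plan is a direct unwinding of definitions. First I would make the two actions completely explicit: for $g\in\GL(U)$, the action on $\Hom(U,V)$ sends $f$ to $f\circ g$, while on $\Omega^{\mathrm{skew}}(U,Z)$ it sends $\beta$ to the pullback $g^*\beta$, defined by $(g^*\beta)(u_1,u_2)=\beta(g(u_1),g(u_2))$.

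Next I would verify equivariance by computing both sides. On the one hand,
\[\varphi_{U,V}(f\circ g)(u_1,u_2)=((f\circ g)^*\omega)(u_1,u_2)=\omega(f(g(u_1)),f(g(u_2))).\]
On the other hand,
\[(g^*\varphi_{U,V}(f))(u_1,u_2)=(g^*(f^*\omega))(u_1,u_2)=(f^*\omega)(g(u_1),g(u_2))=\omega(f(g(u_1)),f(g(u_2))).\]
The two agree, so $\varphi_{U,V}$ is $\GL(U)$-equivariant. (Alternatively one can invoke the contravariant functoriality $(f\circ g)^*=g^*\circ f^*$.)

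For the final clause, suppose $\beta_1,\beta_2\in \Omega^{\mathrm{skew}}(U,Z)$ lie in the same $\GL(U)$-orbit, say $\beta_2=g^*\beta_1$ for some $g\in \GL(U)$. By equivariance, the continuous map $f\mapsto f\circ g$ sends $\varphi_{U,V}^{-1}(\beta_1)$ into $\varphi_{U,V}^{-1}(\beta_2)$, and its inverse $f\mapsto f\circ g^{-1}$ also sends $\varphi_{U,V}^{-1}(\beta_2)$ back into $\varphi_{U,V}^{-1}(\beta_1)$; both maps are continuous, yielding the desired homeomorphism. There is no real obstacle here—the lemma is a formal consequence of the definitions, and the only thing worth being careful about is keeping track of whether the $\GL(U)$-action is a left or right action, which matches on both sides because pullback of morphisms and pullback of bilinear forms are both contravariant in $U$.
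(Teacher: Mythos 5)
Your proof is correct and takes exactly the approach the paper intends—the paper declines to write out the verification, remarking only that it is ``straightforward by definition of $\varphi_{U,V}$,'' and your computation $(f\circ g)^*\omega = g^*(f^*\omega)$ together with the continuity of precomposition is precisely that straightforward check.
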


Let \(\beta\colon U\times U\to Z\) be a bilinear map. Suppose \(W\subset U\) is a complement of \(\ker({\beta}^{\adj})\), and consider \(i^*\beta\oplus 0\colon (W\oplus\ker({\beta}^\adj))\times(W\oplus\ker({\beta}^\adj))\to Z\), where \(i\colon W\to U\) is the inclusion. Then Lemma \ref{lem:varphi-equivariant} yields that up to homeomorphism there is an inclusion given by the composite
 \[\varphi_{W,V}^{-1}(i^*\beta)\hookrightarrow \varphi_{U,V}^{-1}(i^*\beta\oplus 0)\cong \varphi_{U,V}^{-1}(\beta)\] 
where the first map is defined by extending a linear map \(f\colon W\to V\) trivially over \(\ker(\beta^\adj)\). These inclusions are in fact retracts, with retraction induced by restriction along \(W\subset U\).

\begin{lemma}\label{defretX}
Let \(\beta\in \Omega^{\mathrm{skew}}(U,Z)\) and \(i^*\beta\in\Omega^{\mathrm{skew}}(W,Z)\) as above. Then the inclusion \(\varphi_{W,V}^{-1}(i^*\beta)\hookrightarrow  \varphi_{U,V}^{-1}(\beta)\) admits a deformation retraction.
\end{lemma}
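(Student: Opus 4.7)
The plan is to use a straight-line homotopy that kills the component of $f \colon U \to V$ along $\ker(\beta^\adj)$, leaving the restriction to $W$ untouched.

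Concretely, write $U = W\oplus K$ with $K := \ker(\beta^\adj)$. Any linear map $f\colon U\to V$ decomposes as $f = f_W\circ p_W + f_K\circ p_K$, where $p_W, p_K$ are the projections and $f_W := f|_W$, $f_K := f|_K$. The first step is to translate the single equation $f^*\omega = \beta$ into three separate conditions on the pair $(f_W, f_K)$, by testing on pairs $(w_1,w_2)$, $(w,k)$, and $(k_1,k_2)$. Since $K\subset\ker(\beta^\adj)$, the right-hand sides of the last two vanish, and one obtains:
\begin{enumerate}
\item[(i)] $f_W^*\omega = i^*\beta$,
\item[(ii)] $\omega(f_W(w), f_K(k)) = 0$ for all $w\in W$, $k\in K$,
\item[(iii)] $f_K^*\omega = 0$.
\end{enumerate}

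Next I would define $H\colon \varphi_{U,V}^{-1}(\beta)\times I \to \Hom(U,V)$ by
\[
H(f,t) \;=\; f_W\circ p_W + (1-t)\,f_K\circ p_K.
\]
This is continuous and agrees with $f$ at $t=0$, and at $t=1$ lands on the extension-by-zero of $f_W$, i.e.\ in the image of the inclusion. Moreover, on that image $H$ is stationary. The only thing left to check is that $H(f,t)$ still lies in $\varphi_{U,V}^{-1}(\beta)$ for all $t\in I$: condition (i) is untouched because $f_W$ is not scaled, condition (ii) is homogeneous of degree $1$ in $f_K$, and condition (iii) is homogeneous of degree $2$ in $f_K$, so both are preserved under the scaling $f_K\mapsto (1-t)f_K$.

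There is essentially no obstacle here; the only subtle point is verifying that the retracted map really does land in the fiber, and this follows immediately from the linearity and bilinearity of the defining conditions in $f_K$. Note that the argument does not require an inner product or any compatibility of the decomposition $U = W\oplus K$ beyond the fact that $K = \ker(\beta^\adj)$, so $H$ is manifestly natural in the choice of complement.
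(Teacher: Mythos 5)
Your proof is correct and takes essentially the same approach as the paper: a straight-line homotopy $f \mapsto f_W \circ p_W + (1-t)f_K \circ p_K$ that contracts the component of $f$ supported on $\ker(\beta^\adj)$, which is precisely the pullback along the linear homotopy $H_t(w,k) = (w, tk)$ used in the paper. One small point worth noting: the paper's proof says to "pick any complement $W'$ of $W$," but the scaling argument actually requires $W' = \ker(\beta^\adj)$ (otherwise the cross terms $\beta(w,w')$ need not vanish and $H_t^*\beta \neq \beta$); your write-up makes the correct choice $K = \ker(\beta^\adj)$ explicit and spells out the three defining conditions, which removes any ambiguity.
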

\begin{proof}
Let \(f\in \varphi_{U,V}^{-1}(\beta)\). Without loss of generality let us assume that \(i^*\beta\oplus 0 = \beta\). If \(W^\prime\subset U\) is a complement of \(W\), then  \((f^*\omega)|_{W^\prime\times W^\prime}=0\), 
and a linear homotopy would give the desired deformation retraction. Indeed, pick any complement \(W^\prime\) of \(W\) and consider  
\begin{align*}
 H\colon I\times U &\to U\\
(t,(w,w^\prime))&\mapsto (w,tw^\prime)\,.
\end{align*}
Then \(H\) induces a linear homotopy \(\tilde{H}\colon I\times \Hom(U,V)\to \Hom(U,V)\) given by \(\tilde{H}(t,f) = H_t^*(f)\). The above argument shows that \(\tilde{H}\) restricts to a well defined map \(\tilde{H}\colon  I\times \varphi_{U,V}^{-1}(\beta)\to \varphi_{U,V}^{-1}(\beta)\), yielding a deformation retraction, as claimed.
\end{proof}

For an arbitrary skew-symmetric bilinear map \(\beta\) we define the \emph{rank of} \(\beta\) as the codimension of \(\ker(\beta^\adj)\).

\begin{proposition}\label{prop:homotopy type varphi^{-1}}
Every non-empty fiber \(\varphi^{-1}_{U,V} (\beta)\) is homotopy equivalent to a space of linear embeddings of spaces with bilinear maps \[\mathrm{Emb}((W,\alpha),(V,\omega))\,,\] where \(\alpha= i^*\beta\) and \(i\colon W\hookrightarrow U\) is any complement of \(\ker\beta^\adj\). 
\end{proposition}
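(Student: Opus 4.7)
The plan is to combine Lemma \ref{defretX} with the non-degeneracy of the restricted form to identify the fiber, up to homotopy, with a space of form-preserving linear embeddings.

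First I would apply Lemma \ref{defretX} to the inclusion $i\colon W\hookrightarrow U$ of a complement of $\ker(\beta^\adj)$. This gives a deformation retraction
\[\varphi_{W,V}^{-1}(\alpha)\xhookrightarrow{\simeq}\varphi_{U,V}^{-1}(\beta),\]
where $\alpha=i^*\beta$. By the choice of $W$ as a complement of $\ker(\beta^\adj)$, the restricted form $\alpha$ is non-degenerate on $W$: indeed, if $w\in W$ satisfies $\alpha(w,w')=0$ for all $w'\in W$, then $\beta(w,u)=0$ for all $u\in U=W\oplus\ker(\beta^\adj)$, hence $w\in W\cap\ker(\beta^\adj)=0$.

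Next I would show the set-theoretic equality
\[\varphi_{W,V}^{-1}(\alpha)=\Emb((W,\alpha),(V,\omega)).\]
By definition, the right-hand side consists of those injective linear maps $f\colon W\to V$ with $f^*\omega=\alpha$, so the inclusion $\supseteq$ is obvious. For the reverse inclusion, let $f\in\varphi_{W,V}^{-1}(\alpha)$ and suppose $f(w)=0$ for some $w\in W$. Then for every $w'\in W$,
\[\alpha(w,w')=(f^*\omega)(w,w')=\omega(f(w),f(w'))=0,\]
so $w\in\ker(\alpha^\adj)$. Since $\alpha$ is non-degenerate on $W$, this forces $w=0$, so $f$ is injective and the equality holds.

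Composing the homeomorphism of the previous step with the deformation retraction of the first step yields the desired homotopy equivalence
\[\varphi_{U,V}^{-1}(\beta)\simeq\Emb((W,\alpha),(V,\omega)).\]
There is no real obstacle: the only subtlety is checking that the non-degeneracy of $\alpha$ (which follows directly from the choice of $W$) upgrades $f^*\omega=\alpha$ into injectivity of $f$, and this is the one line of algebra displayed above.
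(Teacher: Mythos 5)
Your proof is correct and follows essentially the same route as the paper's: apply Lemma~\ref{defretX} to deformation-retract $\varphi_{U,V}^{-1}(\beta)$ onto $\varphi_{W,V}^{-1}(\alpha)$, then observe that non-degeneracy of $\alpha=i^*\beta$ forces every $f\in\varphi_{W,V}^{-1}(\alpha)$ to be injective, giving the set-theoretic identification with $\mathrm{Emb}((W,\alpha),(V,\omega))$. You supply slightly more detail than the paper by spelling out both the non-degeneracy of $\alpha$ and the injectivity argument, but the structure of the argument is the same.
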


\begin{proof}
Lemma \ref{defretX} asserts that there is a homotopy equivalence \(\varphi_{W,V}^{-1}(\alpha)\simeq \varphi_{U,V}^{-1}(\beta)\) where \(W\subset U\) is a complement of \(\ker(\beta^\adj)\) and \(\alpha = i^*\beta\). Since \(\alpha\) is non-degenerate it follows that every \(f\in \varphi_{U^\prime,V}^{-1}(\alpha)\) is injective, and hence \(\varphi_{U^\prime,V}^{-1}(\alpha) = \mathrm{Emb}((U^\prime,\alpha),(V,\omega))\).
\end{proof}

For a finite dimensional Lie algebra \(\g\), the canonical isomorphism \(\Omega^{\text{skew}}(\F^k,[\g,\g])\cong\Ant_k([\g,\g])\) induced by the canonical basis of \(\F^k\), allows us to interpret \(\Ant_k(A)\) as a subset of skew symmetric maps with values in \(Z(\g)\cap[\g,\g]\). Understanding \(B_k(\g,A)\), up to homotopy, now reduces to study the homotopy type of spaces of linear embeddings of spaces with bilinear maps 
\[\mathrm{Emb}((\F^r,\alpha),(\g/Z(\g),\omega))\,,\]
where \(r\leq k\), and \(\alpha\colon \F^r\times\F^r\to \mathrm{span}_\g(A)\) is a non-degenerate skew-symmetric bilinear map.

The situation in which \([\g,\g]=Z(\g)\) and \(\mathrm{rank}(Z(\g))= 1\) can be fully described as we will do in the next section, but before that let us record the corresponding statements for nilpotent Lie groups.

\subsection{Commuting elements in nilpotent Lie groups }

Suppose \(G\) is a connected nilpotent Lie group with Lie algebra \(\g\). By Lemma \ref{lem:center-center} the restriction of the exponential map to the center $\exp|\colon Z(\g)\to Z(G)$ is a universal covering of $Z(G)$ and a homomorphism,  hence as abelian Lie groups \(Z(G)\) is isomorphic to the quotient \(Z(\g)/\ker(\exp|)\). Since the inclusion $Z(G)\hookrightarrow G$ admits a deformation retraction, we then have a canonical identification of $\pi_1(G)$ as the central subgroup 
\[\pi_1(G) = \ker(\exp|)\subset Z(\g)\,.\] 
Hereon we will identify $\pi_1(G)$ with this discrete subgroup.


\begin{theorem}\label{maintheorem2}
Let \(G\) be a connected nilpotent Lie group and $k\geq 2$. Then, up to homotopy equivalence, \(\Hom(\Z^k,G)\) is a disjoint union of spaces
\[\mathrm{Emb}((\F^r,\alpha),(\g/Z(\g),\omega))\times (Z(\g)/\pi_1(G))^k\,,\] 
where \(2\leq r\leq k\) and \(\alpha\in H^2(\Z^r;\pi_1(G))\) is non-degenerate, or $r=0$  and $\alpha = 0$. 
\end{theorem}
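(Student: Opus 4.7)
The plan is to pass from $G$ to its Lie algebra and then apply the splitting and fiber identification already developed in the section.

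First I would realize $G$ as $\tilde G/\exp(A)$, where $\tilde G$ is the universal cover and $A := \pi_1(G) \subset Z(\g)$ is the discrete central subgroup $\ker(\exp|_{Z(\g)})$, and then apply the homeomorphism (\ref{hom&ComElLieAlg}) coming from Proposition \ref{prop:AC-LieGrp-LieAlg} to get
\[\Hom(\Z^k,G) \cong B_k(\g,A)/A^k.\]

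Next, since every Lie bracket lands in $[\g,\g]$, we have $B_k(\g,A) = B_k(\g,A_0)$ for $A_0 := A\cap[\g,\g]$, and now $A_0 \subset Z(\g)\cap[\g,\g]$, so Lemma \ref{upper-bound-AlmCom} provides a homeomorphism
\[B_k(\g,A_0) \cong \varphi^{-1}(\Ant_k(A_0))\times Z(\g)^k.\]
The inner-product splitting used in that proof is in fact $A^k$-equivariant (not just $A_0^k$-equivariant), since $A\subset Z(\g)$ acts additively only on the central summand and trivially on the quotient $\g/Z(\g)$. Quotienting by $A^k$ then yields
\[\Hom(\Z^k,G) \cong \varphi^{-1}(\Ant_k(A_0))\times (Z(\g)/\pi_1(G))^k.\]

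To finish, I would decompose the discrete union $\varphi^{-1}(\Ant_k(A_0)) = \bigsqcup_{\beta\in\Ant_k(A_0)}\varphi^{-1}(\beta)$. By Proposition \ref{prop:homotopy type varphi^{-1}}, each non-empty fiber $\varphi^{-1}(\beta)$ is homotopy equivalent to $\mathrm{Emb}((\F^r,\alpha),(\g/Z(\g),\omega))$, where $r = \mathrm{rank}(\beta)$ and $\alpha = i^*\beta$ (for any complement $i\colon \F^r \hookrightarrow \F^k$ of $\ker\beta^\adj$) is non-degenerate by construction. Since every skew-symmetric bilinear form on $\F$ vanishes, we cannot have $\mathrm{rank}(\beta)=1$; hence the only possibilities are $\beta = 0$ (giving $r=0$, $\alpha = 0$) or $r\geq 2$ with non-degenerate $\alpha \in H^2(\Z^r;\pi_1(G))$, as claimed.

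I do not anticipate a serious obstacle: the argument is essentially assembly of three results already in hand (the linearization Proposition \ref{prop:AC-LieGrp-LieAlg}, the central splitting Lemma \ref{upper-bound-AlmCom}, and the fiber description Proposition \ref{prop:homotopy type varphi^{-1}}). The only point requiring care is the mild mismatch between $A$ and $A_0$: because brackets land in $[\g,\g]$ automatically, the sets $B_k(\g,A)$ and $B_k(\g,A_0)$ coincide as $A^k$-spaces, so Lemma \ref{upper-bound-AlmCom} applied to $A_0$ still produces an $A^k$-equivariant splitting, which is exactly what is needed in order to pass to the quotient by $A^k = \pi_1(G)^k$.
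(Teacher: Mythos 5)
Your proof is correct and follows essentially the same route as the paper: pass to the Lie algebra via Proposition \ref{prop:AC-LieGrp-LieAlg} / homeomorphism (\ref{hom&ComElLieAlg}), split off $Z(\g)^k$ via Lemma \ref{upper-bound-AlmCom}, quotient by $A^k$, and then identify the fibers of $\varphi$ via Proposition \ref{prop:homotopy type varphi^{-1}}. The one thing you do more carefully than the paper is observing that Lemma \ref{upper-bound-AlmCom} is stated only for $A\subset Z(\g)\cap[\g,\g]$, whereas $\pi_1(G)$ is merely a subgroup of $Z(\g)$; your remark that $B_k(\g,A)=B_k(\g,A\cap[\g,\g])$ (since brackets automatically land in $[\g,\g]$) and that the splitting is nonetheless $A^k$-equivariant cleanly closes this small gap, which the paper passes over silently.
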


\begin{remark}
The space corresponding to the case of $r = 0$ and $\alpha = 0$, is path-connected and corresponds to $\Hom(\Z^k,G)_1$, the path-connected component of the trivial representation. Indeed, $\Emb((0,0),(\g/Z(\g),\omega)) = \{0\}$ and the trivial representation is contained in $\{0\}\times (Z(\g)/\pi_1(G))^k$.
\end{remark}

\begin{proof}[Proof of Theorem \ref{maintheorem2}]
From (\ref{hom&ComElLieAlg}) and Lemma \ref{upper-bound-AlmCom} we obtain a homeomorphism \[\Hom(\Z^k,G)\xrightarrow{\cong} \varphi^{-1}(\Ant_{k}(\pi_1(G)))\times(Z(\g)/\pi_1(G))^k\,.\]
As $\pi_1(G)$ is a discrete subgroup of $Z(\g)$, the space $\varphi^{-1}(\Ant_{k}(\pi_1(G)))$ is a disjoint union of fibers $\varphi_{U,V}^{-1}(\beta)$. Proposition \ref{prop:homotopy type varphi^{-1}} specialized to $U = \F^k$, $V = \g/Z(\g)$,  $W = \F^r$, $Z = \mathrm{span}_\g(\pi_1(G))$, and $\omega$ the induced map form the Lie bracket of $\g$, gives the desired deformation retraction.
\end{proof}

A first application of Theorem \ref{maintheorem2} is regarding the question of existence of a deformation retraction of commuting elements in $G$ into commuting elements in a maximal compact subgroup $K$. To this end we will assume that $\pi_0(G)$ is finite, so that the inclusion $K\hookrightarrow G$ admits a deformation retraction. 

To state our result, let us denote the path-connected component of the identity of $G$ by $G_0$.

\begin{corollary}\label{cor:def-ret-1-comp}
Let $G$ be a nilpotent Lie group with finite $\pi_0(G)$, and let $k\geq 2$. If $K\subset G$ is a maximal compact subgroup, then there is a deformation retraction
 \[\Hom(\Z^k,G)_1 \simeq \Hom(\Z^k,K)_1 = K_0^k\,.\]
\end{corollary}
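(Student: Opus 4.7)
The plan is to reduce the corollary to the product description of $\Hom(\Z^k,G_0)_1$ obtained in the proof of Theorem \ref{maintheorem2}, and then to deformation-retract the two resulting factors separately.

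First, because $\pi_0(G)$ is finite, $G_0^k$ is a union of components of $G^k$, so the component of the trivial representation in $\Hom(\Z^k,G)$ coincides with its component in $\Hom(\Z^k,G_0)$; similarly $K_0$ is a maximal compact subgroup of $G_0$. I will invoke the classical fact that every compact subgroup of a connected nilpotent Lie group is central: the adjoint action on $\g$ is unipotent, and on a compact subgroup it is both unipotent and has relatively compact image, hence trivial. Consequently $K_0 \subset Z(G_0)$, and $K_0$ is the (unique) maximal compact subgroup of the abelian Lie group $Z(G_0) = Z(\g)/\pi_1(G_0)$. Choosing a $\Z$-basis of $\pi_1(G_0)$ and extending to an $\R$-basis of $Z(\g)$ identifies $Z(G_0) \cong T^b \times \R^a$ with $K_0 = T^b$, and linear contraction of the $\R^a$ factor gives an explicit deformation retraction $Z(G_0) \to K_0$.

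Next, combining Proposition \ref{prop:AC-LieGrp-LieAlg}, the homeomorphism (\ref{hom&ComElLieAlg}), and Lemma \ref{upper-bound-AlmCom} with $A = \pi_1(G_0)$ produces a homeomorphism
\[\Hom(\Z^k,G_0) \;\cong\; \varphi^{-1}(\Ant_k(\pi_1(G_0))) \times Z(G_0)^k.\]
Since $\pi_1(G_0)$ is discrete in $Z(\g)$, the subset $\Ant_k(\pi_1(G_0))$ is discrete in $\Ant_k([\g,\g])$, so the first factor splits as the disjoint union of the fibers $\varphi^{-1}(\beta)$. The fiber $\varphi^{-1}(0)$ is star-convex, since scaling $(x_1,\ldots,x_k)$ by $t$ scales $\varphi$ by $t^2$; hence it coincides with the path-component of $0$, and the trivial representation lies in the component $\varphi^{-1}(0) \times Z(G_0)^k$.

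Finally, I will define the desired deformation retraction as the product of the linear retraction of $\varphi^{-1}(0)$ onto $\{0\}$ with the retraction $Z(G_0)^k \to K_0^k$ from the second paragraph, yielding a deformation retraction of $\varphi^{-1}(0)\times Z(G_0)^k$ onto $\{0\}\times K_0^k$. The main bookkeeping step, and the place where care is needed, is to verify that $\{0\}\times K_0^k$ corresponds under the splitting to the image of $\Hom(\Z^k,K_0) = K_0^k \hookrightarrow \Hom(\Z^k,G_0)_1$: this amounts to tracing through Lemma \ref{upper-bound-AlmCom} and noting that $K_0$ lifts via $\exp|_{Z(\g)}$ into $Z(\g)$, which sits in the $Z(\g)$-summand of the chosen decomposition $\g \cong (\g/Z(\g))\oplus Z(\g)$; so a $k$-tuple in $K_0$ has first coordinate $0$ in the splitting. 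Combined with $\Hom(\Z^k,K)_1 = K_0^k$ (as $K_0$ is a torus), this yields the claimed deformation retraction.
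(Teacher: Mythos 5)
Your argument follows the same route as the paper's: reduce to $G_0$, apply the splitting $\Hom(\Z^k,G_0)\cong\varphi^{-1}(\Ant_k(\pi_1(G_0)))\times Z(G_0)^k$ from Lemma \ref{upper-bound-AlmCom} (equivalently, the $r=0$, $\alpha=0$ case of Theorem \ref{maintheorem2}), and retract each factor. You simply flesh out details that the paper leaves implicit --- centrality of $K_0$ via unipotence of the adjoint action, star-convexity of $\varphi^{-1}(0)$, and the check that $K_0^k$ lands in $\{0\}\times K_0^k$ under the splitting --- all of which are correct.
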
 
 
\begin{proof}
In general, for an arbitrary Lie group we always have $\Hom(\Z^k,G)_1 = \Hom(\Z^k,G_0)_1$. Now $\Hom(\Z^k,G_0)_1$ corresponds to the case of $r = 0$ and $\alpha = 0$ of Theorem \ref{maintheorem2} applied to $G_0$. If we let $\g$ denote the Lie algebra of $G_0$ we have a deformation retraction $\Hom(\Z^k,G_0)_1\simeq (Z(\g)/\pi_1(G_0))^k$ with respect to the inclusion $Z(\g)/\pi_1(G_0)\cong Z(G_0)\hookrightarrow G_0$. Now note that $K_0$ is the maximal compact subgroup of $G_0$ (in fact the maximal torus), and thus is the central subgroup where the inclusion $K_0\hookrightarrow Z(G_0)$ admits a deformation retraction.
\end{proof} 
 
The next application concerns the extent of Pettet and Souto's deformation retraction generalization to nilpotent Lie groups.

 \begin{corollary}\label{cor:hom-path-conn}
 Let $G$ be a connected nilpotent Lie group and $k\geq 2$. Then $\Hom(\Z^k,G)$ is not path-connected if and only if there is a non-trivial element in $\pi_1(G)$ that is a Lie bracket. 
\end{corollary}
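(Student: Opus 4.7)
The plan is to read off path-components directly from Theorem \ref{maintheorem2}. That theorem, via the homeomorphism (\ref{hom&ComElLieAlg}) and Lemma \ref{upper-bound-AlmCom}, identifies
\[
\Hom(\Z^k,G) \cong \bigsqcup_{\alpha \in \Ant_k(\pi_1(G))} \varphi^{-1}(\alpha) \times (Z(\g)/\pi_1(G))^k
\]
as a topological disjoint union, since $\pi_1(G)$ is discrete in $Z(\g)$ and $\varphi$ is continuous. The factor $(Z(\g)/\pi_1(G))^k$ is connected, and the $\alpha=0$ fiber is star-convex under $f\mapsto tf$ (this preserves $f^*\omega = 0$), so the summand at $\alpha=0$ is a single path-component. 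Therefore $\Hom(\Z^k,G)$ is path-connected if and only if $\varphi^{-1}(\alpha)=\emptyset$ for every non-zero $\alpha\in\Ant_k(\pi_1(G))$.

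For the forward direction (contrapositive), suppose no non-trivial element of $\pi_1(G)$ is a Lie bracket. Any $f\in \varphi^{-1}(\alpha)$ satisfies $\alpha_{ij}=[\tilde x_i,\tilde x_j]$ for lifts $\tilde x_\ell\in\g$ of $f(e_\ell)$, and each $\alpha_{ij}$ lies in $\pi_1(G)$. By hypothesis every such bracket vanishes, so $\alpha=0$. Thus only the zero fiber is non-empty, and $\Hom(\Z^k,G)$ is path-connected.

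For the backward direction, given $a=[x,y]\neq 0$ in $\pi_1(G)$, I would take $\alpha\in\Ant_k(\pi_1(G))$ with $\alpha_{12}=a$, $\alpha_{21}=-a$, and all other entries zero. A short check shows $\bar x$ and $\bar y$ are linearly independent in $\g/Z(\g)$: if $y=cx+z$ with $z\in Z(\g)$, then $[x,y]=0$, contradicting $a\neq 0$. Hence the map $f\colon \F^k\to \g/Z(\g)$ defined by $f(e_1)=\bar x$, $f(e_2)=\bar y$, and $f(e_i)=0$ for $i\geq 3$ satisfies $f^*\omega=\alpha$, so $\varphi^{-1}(\alpha)$ is non-empty. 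This produces a path-component of $\Hom(\Z^k,G)$ distinct from the trivial one, so the space fails to be path-connected.

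I do not anticipate a serious obstacle: once Theorem \ref{maintheorem2} is in hand, the argument is largely bookkeeping on the disjoint-union decomposition. The only substantive geometric content is the one-line linear-independence check in the backward direction, plus the observation that $Z(\g)/\pi_1(G)$ and $\varphi^{-1}(0)$ are each connected.
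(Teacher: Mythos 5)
Your proposal is correct and takes essentially the same route as the paper: both reduce the question to whether any nonzero $\alpha\in\Ant_k(\pi_1(G))$ has a preimage under $\varphi$, using the decomposition established en route to Theorem~\ref{maintheorem2}, and both produce the same witness $\alpha$ from a bracket $a=[x,y]$. The one stylistic difference is that you work directly with the fibers $\varphi^{-1}(\alpha)$, whereas the paper phrases the test via $\Emb((\F^r,\alpha),(\g/Z(\g),\omega))$ for non-degenerate $\alpha$, which obliges it to verify that $\alpha$ is non-degenerate (equivalently, that $f$ is injective); since your formulation never needs $f$ to be an embedding, your linear-independence check of $\bar x,\bar y$ is in fact superfluous and can be dropped.
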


\begin{remark}
If $\Hom(\Z^k,G)$ is not path connected, then $\Hom(\Z^k,G)\not\simeq \Hom(\Z^k,K)$ because $K$ is a torus and hence $\Hom(\Z^k,K) = K^k$, which is connected. If $\Hom(\Z^k,G)$ is path-connected, then the deformation retraction is attained by Corollary \ref{cor:def-ret-1-comp}.
\end{remark}

\begin{proof}[Proof of Corollary \ref{cor:hom-path-conn}]

First note that Theorem \ref{maintheorem2} implies that $\Hom(\Z^k,G)$ is path-connected if and only if every space of embeddings $\mathrm{Emb}((\F^r,\alpha),(\g/Z(\g),\omega))$ is empty for $r\geq 2$ and $\alpha\in \Ant_r(\pi_1(G))$ non-degenerate.

 Suppose $a\in \pi_1(G)\subset Z(\g)$ is a non-zero bracket, say $a = [x,y]$ for some $x,y\in\g$. Then we define $\alpha\colon \F^2\times\F^2\to  \mathrm{span}_\g(\pi_1(G))$ in the canonical basis $\{e_1,e_2\}$ by $\alpha(e_1,e_2) = [x,y]$ and extend linearly so that $\alpha$ is antisymmetric. Then the map $f\colon \F^2\to \g/Z(\g)$ determined by $f(e_1) = x$ and $f(e_2) = y$, satisfies $f^*\omega = \alpha$. Since $[x,y]$ is not trivial, it follows that $\alpha^\adj$ is injective, and by definition $\alpha$ is non-degenerate. Then $\Emb((\F^2,\alpha),(\g/Z(\g),\omega))$ is non-empty.
 
 Now suppose that there is a linear embedding  $f\colon \F^r\to \g/Z(\g)$ and a non-degenerate bilinear map $\alpha\colon \F^r\times\F^r\to \mathrm{span}_\g(\pi_1(G))$ such that $f^*\omega = \alpha$, for some $r\geq 2$. Since $\alpha$ is non-degenerate, then there exists $1<i\leq r$ such that $\alpha(e_1,e_i) = [f(e_1),f(e_i))]$ is non-zero, completing the proof.
\end{proof}

The main examples as different instances of Corollary \ref{cor:hom-path-conn} are connected nilpotent Lie groups $G$ isomorphic to $(S^1)^m\times N$ where $N$ is simply-connected; in which case all non-trivial elements of $\pi_1(G)$ are \emph{not} Lie brackets, so $\Hom(\Z^k,G)$ is connected. On the other end, we have the reduced Heisenberg groups, which in dimension $2n+1$ may be described as a non-trivial central extension 
\[1\to S^1\to G\to \F^{2n}\to 1\,,\]
and in these cases every element of $\pi_1(G)$ \emph{is} a Lie bracket, and thus $\Hom(\Z^k,G)$ is disconnected. In the next section we will analyze the situation for these latter groups in full detail.

\section{Almost commuting elements in Heisenberg algebras}

%

The 2-step nilpotent Lie algebras with center of rank 1 are the Heisenberg algebras \(\h_{2n+1}(\F)\) and there is exactly one for every \(n\). We have a canonical identification 
\[\h_{2n+1}(\F)/Z(\h_{2n+1}(\F)) = \F^{2n}\]
where the first \(n\) coordinates are identified with \(a+Z(\h_{2n+1}(\F))\) and the last \(n\) coordinates with \(b+Z(\h_{2n+1}(\F))\), where \(a\) and \(b\) are the \(1\times n\) and \(n\times 1\) matrices, respectively, as in \((\ref{strict-upper})\). Formula (\ref{bracket-sut}) specialized to \(\h_{2n+1}(\F)\) reads
 \[[\st_{n+2}(a,b,0,z),\st_{n+2}(a^\prime,b^\prime,0,z^\prime)] = \st_{n+2}(0,0,0,a^\prime b - ab^\prime).\]
That is, the bilinear form induced by the Lie bracket, which we will denote \(\omega_n\), is given by
\begin{align*}
\omega_{n}\colon \F^{2n}\times\F^{2n}&\to \F\\
((x_1,y_1),(x_2,y_2))&\mapsto x_1\cdot y_2 - 
x_2\cdot y_1\,.
\end{align*}
Let \(\{e_j\}_{j=1}^k\) be the canonical basis of \(\F^k\). Then map (\ref{varphi_U}) takes the form
\begin{align*}
\varphi_{k,n}:=\varphi_{\F^k,\F^{2n}}\colon \Hom(\F^k,\F^{2n})&\to \Ant_k(\F)\\
f&\mapsto ((f^*\omega_n)(e_i,e_j))\,.
\end{align*}

\subsection{\(A^k\)-equivariant homeomorphism type}

\def\lf{\left\lfloor}
\def\rf{\right\rfloor}

The following lemma is perhaps the key fact in our analysis of skew-symmetric bilinear maps with values over a field instead of a vector space. 

\begin{lemma}\label{rank-inv}
Let \(k,n\geq1\), and \(B,B^\prime\in \Ant_{k}(\F)\). If \(B\) and \(B^\prime\) have the same rank, then there is a homeomorphism \(\varphi^{-1}_{k,n}(B)\cong \varphi_{k,n}^{-1}(B^\prime)\).
\begin{proof}
Let \(J_m\) be the standard \(2m\times 2m\) symplectic matrix, and suppose \(2r\leq k\). The Darboux basis of a symplectic vector space of rank \(2r\) gives a congruence to the \(k\times k\) block matrix with first \(2r\times 2r\) block equal to \(J_{r}\) and is zero elsewhere. So in particular, if two antisymmetric matrices have the same rank, they are congruent. Hence the map that assigns the rank of a matrix induces a bijection 
\begin{center}
 \(\Ant_{k}(\F)/\GL(k,\F)\xrightarrow{\cong}{\{0,2,4,...,2\lf\frac{k}{2}\rf \}}\).  
\end{center}
where the orbit space is taken with respect to the congruence action of \(\GL(k,\F)\). Since pullback along automorphisms of \(\F^k\) corresponds to congruence of matrices under the canonical isomorphism \(\Omega^{\text{skew}}(\F^k,\F)\cong \Ant_k(\F)\), the lemma now follows from Lemma \ref{lem:varphi-equivariant}.
\end{proof}
\end{lemma}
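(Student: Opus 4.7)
The plan is to reduce the claim to Lemma \ref{lem:varphi-equivariant} by recalling the classification of antisymmetric bilinear forms over a field up to congruence. Specifically, I would first observe that the canonical isomorphism $\Omega^{\mathrm{skew}}(\F^k,\F) \cong \Ant_k(\F)$ translates the right $\GL(k,\F)$-action by pullback of bilinear forms into the congruence action $B \mapsto P^T B P$ on matrices. So if I can show that any two antisymmetric matrices of the same rank are congruent, then Lemma \ref{lem:varphi-equivariant} immediately produces the desired homeomorphism on fibers.

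The key classical fact I would invoke is the symplectic normal form (Darboux basis) for antisymmetric bilinear maps over $\F$: any $B \in \Ant_k(\F)$ of rank $2r$ is congruent to the block matrix with a single $2r \times 2r$ standard symplectic block $J_r$ in the upper left corner and zeros elsewhere. This is proved by induction using symplectic Gram--Schmidt, and in particular shows that rank is a complete invariant for the congruence action. (Note that antisymmetric matrices over a field not of characteristic 2 necessarily have even rank, which is compatible with the Darboux form.) Consequently the rank map descends to a bijection
\[
\Ant_k(\F)/\GL(k,\F) \xrightarrow{\cong} \{0, 2, 4, \ldots, 2\lf k/2 \rf\}.
\]

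Once this is in place, the lemma follows by pure bookkeeping: $B$ and $B'$ having the same rank means $B' = P^T B P$ for some $P \in \GL(k,\F)$, which is precisely the statement that they lie in a common orbit. Then Lemma \ref{lem:varphi-equivariant} applied to $U = \F^k$, $V = \F^{2n}$ provides the homeomorphism $\varphi_{k,n}^{-1}(B) \cong \varphi_{k,n}^{-1}(B')$, implemented concretely by precomposition with $P$. I do not anticipate a genuine obstacle here --- the only content beyond previously established lemmas is the normal form theorem, which is standard --- so the main task is simply to invoke it cleanly.
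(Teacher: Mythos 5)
Your proposal matches the paper's proof essentially step by step: invoke the Darboux normal form to conclude that rank is a complete congruence invariant on $\Ant_k(\F)$, translate congruence into the pullback $\GL(k,\F)$-action on $\Omega^{\mathrm{skew}}(\F^k,\F)$, and conclude via Lemma~\ref{lem:varphi-equivariant}. No meaningful difference in approach.
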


We now have that the fibers are determined by the rank of the antisymmetric matrix and therefore we may only analyze one for each rank, up to homeomorphism. For convenience let us introduce the following notation. 

\begin{defi}
Let \(k\geq 0\) and \(n\geq 1\). Then for every \(0\leq r\leq \lf \frac{k}{2}\rf\), we define
\[X(k,n,2r):= \{f\in \Hom(\F^k,\F^{2n})\;|\; f^*\omega_n = \omega_{r}\oplus 0_{k-2r}\}\,,\]
which is precisely the preimage of the block matrix \((\omega_{r}(e_i,e_j))\oplus 0_{k-2r}\in \Ant_k(\F)\) under \(\varphi_{k,n}\).
\end{defi}

It is not hard to see that the spaces \(X(k,n,2r)\) are in fact algebraic varieties carved out by a set of quadrics, and we will use this description in Section \ref{examples}. In particular an element in \(X(k,n,2r)\) is a linear transformation that restricts over \(\F^{2r}\) to a symplectic embedding. Up to homotopy, this completely describes \(X(k,n,2r)\). To see this let \(\Sp(\F^{2m},\omega_m)\) be the group of linear automorphisms of \(\F^{2m}\) that preserve the symplectic form \(\omega_m\).

\begin{lemma}\label{some-homeo-typeX}
Let \(k,n\geq1\), and  \( r\leq \lf \frac{k}{2}\rf\). 
\begin{enumerate}
\item If \(n<{r}\), then \(X(k,n,2r) = \emptyset\).
\item If \(n\geq r\), then there is a homotopy equivalence 
\[\Sp(\F^{2n},\omega_n)/\Sp(\F^{2(n-r)},\omega_{n-r})\xrightarrow{\simeq}X(k,n,2r)\]
which is a diffeomorphism if \(k = 2r\).
\end{enumerate}
\end{lemma}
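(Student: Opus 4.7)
For part (1), the bilinear form $f^*\omega_n$ factors through $\im f$, so its rank is at most $\dim(\im f)\leq 2n$; since $\omega_r\oplus 0_{k-2r}$ has rank $2r$, no such $f$ can exist when $n<r$.

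For part (2), I would realize $X(k,n,2r)$ as a thickening of a single homogeneous orbit. Let $\Sp(\F^{2n},\omega_n)$ act on $X(k,n,2r)$ by post-composition $(A,f)\mapsto A\circ f$, which preserves $X(k,n,2r)$ because $A^*\omega_n=\omega_n$. Take as basepoint $f_0\in X(k,n,2r)$ defined by $f_0(e_i)=e_i$ for $i\leq 2r$ and $f_0(e_i)=0$ otherwise. Using the canonical splitting $\omega_n=\omega_r\oplus\omega_{n-r}$, a direct block-matrix computation with $A^{T}J_nA=J_n$ identifies the stabilizer of $f_0$ with $\{I_{2r}\oplus C:C\in\Sp(\F^{2(n-r)},\omega_{n-r})\}$. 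The orbit map therefore induces a continuous injection
\[
\Phi\colon \Sp(\F^{2n},\omega_n)/\Sp(\F^{2(n-r)},\omega_{n-r})\hookrightarrow X(k,n,2r),\quad A\cdot\mathrm{Stab}(f_0)\mapsto A\circ f_0,
\]
whose image is the $\Sp(\F^{2n},\omega_n)$-orbit of $f_0$.

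When $k=2r$, every $f\in X(2r,n,2r)$ is a symplectic linear embedding $(\F^{2r},\omega_r)\hookrightarrow(\F^{2n},\omega_n)$; by Witt's extension theorem such an embedding extends to an element of $\Sp(\F^{2n},\omega_n)$, so the action is transitive and $\Phi$ is a smooth bijection between homogeneous-space manifolds of the same dimension, hence a diffeomorphism. This settles the last clause of the lemma. For general $k\geq 2r$, I would upgrade $\Phi$ to a homotopy equivalence by deformation-retracting $X(k,n,2r)$ onto the closed subspace $X_0=\{f\in X(k,n,2r):f(e_i)=0\text{ for }i>2r\}$, which by restriction is homeomorphic to $X(2r,n,2r)$. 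The explicit homotopy $H_t(f)(e_i)=f(e_i)$ for $i\leq 2r$ and $H_t(f)(e_i)=tf(e_i)$ for $i>2r$ remains inside $X(k,n,2r)$ for every $t\in I$, because all pairings $\omega_n(f(e_i),f(e_j))$ with at least one index $>2r$ vanish by definition of $X(k,n,2r)$ and are therefore unaffected by scaling; combined with the case $k=2r$ this yields the claimed equivalence.

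The main obstacle I anticipate is the stabilizer calculation: one must check that the symplectic condition genuinely forces the upper block $B$ to vanish in $A=\bigl(\begin{smallmatrix}I_{2r}&B\\0&C\end{smallmatrix}\bigr)$, not merely that the lower-right block be symplectic, and this requires exploiting the specific splitting $\omega_n=\omega_r\oplus\omega_{n-r}$ cleanly. A minor secondary subtlety is justifying that the smooth bijection $\Phi$ in the case $k=2r$ is a diffeomorphism rather than a mere bijection, for which one appeals to the standard homogeneous-space theorem for smooth transitive Lie group actions.
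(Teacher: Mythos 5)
Your proof is correct and follows essentially the same route as the paper: a rank bound for part~(1), and for part~(2) the homogeneous-space identification of \(X(2r,n,2r)\) with \(\Sp(\F^{2n},\omega_n)/\Sp(\F^{2(n-r)},\omega_{n-r})\) via Witt/Darboux extension, followed by the linear scaling homotopy on coordinates of index \(>2r\) (which the paper invokes via Lemma~\ref{defretX} but you spell out directly). The only minor wrinkle is that whether your explicit \(f_0\) literally lies in \(X(k,n,2r)\) depends on the chosen block form of \(\omega_n\); with the paper's convention one should instead take the Darboux embedding \(e_i\mapsto e_i\), \(e_{r+i}\mapsto e_{n+i}\) for \(i\leq r\), which changes nothing in your stabilizer computation (the invertibility argument forcing \(B=0\) goes through identically).
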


\begin{proof}

1. It follows from the fact that for every \(f\) the matrix \(((f^*\omega_n)(e_i,e_j))\) has at most the rank of \(\mathrm{im} (\omega^{\adj}_n)\) which is \(2n\).

2. Let us first suppose that \(k = 2r\), then \(X(2r,n,2r)\) is the space of symplectic embeddings \((\F^{2r},\omega_r)\hookrightarrow (\F^{2n},\omega_n)\), and thus it admits an action of the Lie group \(\Sp(\F^{2n},\omega_n)\) from the right. The Darboux basis guarantees that the action is transitive, and that the stabilizer of the canonical symplectic embedding is \(\Sp(\F^{2(n-r)},\omega_{n-r})\). Therefore we have a diffeomorphism \(X(2r,n,2r)\cong \Sp(\F^{2n},\omega_n)/\Sp(\F^{2(n-r)},\omega_{n-r})\). If \(k>2r\) we specialize Lemma \ref{defretX} to \(W=\F^r\), \(U = \F^k\) and \(V = \F^{2n}\) to obtain that the inclusion 
\[X(2r,n,2r)\xhookrightarrow{\simeq} X(k,n,2r)\]
admits a deformation retraction, as in this case \(i^*(\omega_r\oplus0_{k-2r}) = \omega_{r}\). 
\end{proof}

\begin{remark}
It is not so clear whether the fact that the space of symplectic embeddings is actually a homogeneous space generalizes to non-degenerate skew-symmetric bilinear forms with values in a vector space of dimension greater than one. In that case there is no Darboux basis nor a complement of a vector subspace in terms of its `symplectic' complement. 
\end{remark}


As a group of matrices, \(\Sp(\F^{2m},\omega_m)\) can be identified with those matrices in \(\GL(2m,\F)\) that preserve the antisymmetric matrix 
\[\Omega_m:= (\omega_m(e_i,e_j))\]
under congruence. Let \(\Sp(2m,\F)\) denote the group of \(2m\times 2m\) matrices that preserve the standard symplectic matrix 
\[ 
J_m = \left(
\begin{matrix}
0&I_m\\
-I_m&0
\end{matrix}\right)
\,.\] We can canonically identify these groups since we can obtain \(\Omega_m\) from \(J_m\) by applying a riffle shuffle permutation.

We can now state our description of almost commuting elements of Heisenberg algebras.

\begin{proposition}\label{prop-com-elem}
Let \(n\geq 1\), and \(A\subset Z(\h_{2n+1}(\F))\) the central subgroup \(\Z\) in the real case and \(\Z^2\) in the complex case. Then the map of ordered commutators in path-connected components
\[\pi_0(B_k(\h_{2n+1}(\F),A))\to H^2(\Z^k;A)\]
is injective for every \(k\geq 2\), and surjective if and only if \(\lf\frac{k}{2}\rf\leq n\). Moreover, each path-connected component parametrized by a form \(\beta\in H^2(\Z^k;A)\) is \(A^k\)-homeomorphic to \(X(k,n,2r)\times\F^k\), where \(2r=\mathrm{rank}(\beta^{\adj})\).
\end{proposition}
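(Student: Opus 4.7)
The plan is to assemble the statement from the general machinery specialized to $\g = \h_{2n+1}(\F)$, for which $Z(\g) = [\g,\g] = \F$. Applying Lemma \ref{upper-bound-AlmCom} (the hypothesis $A \subset Z(\g) \cap [\g,\g]$ holds trivially) yields an $A^k$-equivariant homeomorphism
\[B_k(\h_{2n+1}(\F), A) \xrightarrow{\cong} \varphi^{-1}(\Ant_k(A)) \times \F^k,\]
where $\varphi = \varphi_{k,n}$, and $A^k$ acts trivially on the first factor and by translation on the second. Under the canonical identification $H^2(\Z^k; A) \cong \Ant_k(A)$ from the antisymmetrization of the classifying cocycle, the map of ordered commutators becomes $\varphi$ on the first factor, so the proposition reduces to statements about $\varphi$.

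For surjectivity, any $\beta \in \Ant_k(A)$ has even rank $2r \leq 2\lf k/2 \rf$, and every such rank is realized by a matrix with entries in $A$ (since $A$ contains elements of infinite order; e.g., place $r$ copies of $\bigl(\begin{smallmatrix} 0 & 1\\ -1 & 0\end{smallmatrix}\bigr)$ down the diagonal). By Lemma \ref{rank-inv} the fiber $\varphi^{-1}(\beta)$ is homeomorphic to $X(k,n,2r)$, which by Lemma \ref{some-homeo-typeX}(1) is empty precisely when $n < r$. Hence the map surjects onto $H^2(\Z^k; A)$ if and only if $\lf k/2 \rf \leq n$.

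For injectivity, $A$ is discrete in $\F$, so $\Ant_k(A) \subset \Ant_k(\F)$ is discrete, and each fiber $\varphi^{-1}(\beta)$ is open and closed in $\varphi^{-1}(\Ant_k(A))$. By Lemma \ref{some-homeo-typeX}(2), whenever non-empty this fiber has the homotopy type of $\Sp(\F^{2n}, \omega_n)/\Sp(\F^{2(n-r)}, \omega_{n-r})$, which is path-connected because $\Sp(2m, \R)$ and $\Sp(2m, \C)$ are connected Lie groups. So the path components of $\varphi^{-1}(\Ant_k(A))$ are exactly its non-empty fibers, and the contractible factor $\F^k$ does not introduce new components. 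Finally, the homeomorphism $\varphi^{-1}(\beta) \cong X(k,n,2r)$ of Lemma \ref{rank-inv} is automatically $A^k$-equivariant since both sides carry the trivial $A^k$-action, giving the claimed $A^k$-homeomorphism of each component with $X(k,n,2r) \times \F^k$.

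The main obstacle --- minor given the surrounding machinery --- is the bookkeeping: recognizing that every rank $2r \leq 2\lf k/2 \rf$ is actually attained over $A$ (not merely over $\F$), and keeping the $A^k$-equivariance transparent across the product decomposition. Once these identifications are in place, the proof is essentially a concatenation of Lemmas \ref{upper-bound-AlmCom}, \ref{rank-inv} and \ref{some-homeo-typeX}.
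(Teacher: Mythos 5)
Your proposal is correct and follows essentially the same route as the paper: reduce via Lemma \ref{upper-bound-AlmCom} to a product decomposition, use Lemma \ref{rank-inv} to identify fibers by rank with the spaces $X(k,n,2r)$, invoke Lemma \ref{some-homeo-typeX}(1) for the surjectivity bound and \ref{some-homeo-typeX}(2) together with connectedness of $\Sp(2m,\F)$ for injectivity. Your explicit remarks --- that every even rank $\le 2\lf k/2\rf$ is realized by a matrix with entries in $A$, and that the $A^k$-equivariance on the first factor is automatic because the action there is trivial --- are small but welcome clarifications of points the paper leaves implicit (the paper cites Lemma \ref{lem:varphi-equivariant} for the equivariance, which is slightly indirect).
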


\begin{proof}
Let us identify \(H^2(\Z^k,A)\) with \(\Ant_k(A)\). For the injectivity statement, Lemma \ref{upper-bound-AlmCom} tells us we only need to verify that each of the non-empty subspaces 
\[\varphi^{-1}_{k,n}(B)\times \F^{k}
\]
 with \(B\in\Ant_k(A)\), is path-connected. Lemma \ref{rank-inv} asserts that each of these is homeomorphic to \(X(k,n,2r)\times\F^{k}\), where \(2r\) is the rank of the respective \(B\), and Lemma \ref{lem:varphi-equivariant} implies that it is actually an \(A^k\)-equivariant homeomorphism. For every \(n\geq 1\) the symplectic Lie group \(\Sp(2n,\F)\) is connected for both \(\F = \R\) and \(\C\), hence by Lemma \ref{some-homeo-typeX} part 2 it follows that each \(X(k,n,2r)\) is path-connected, as well. The surjectivity statement follows from Lemma \ref{some-homeo-typeX} part 1.
\end{proof}

\subsection{Commuting elements in reduced generalized Heisenberg groups}

Let \(\O(m)\) be the group of orthogonal \(m\times m\) real matrices, \(\U(m)\) the group of \(m\times m\) unitary matrices, and let \(\Sp(m)\) be the \(2m\times 2m\) compact symplectic group (which may be interpreted as the orthogonal \(n\)-frames in the quaternionic space, \(\mathbb{H}^n\)). The maximal compact subgroup of \(\Sp(2m,\R)\) is \(\O(2m)\cap \Sp(2m,\R)\) which can be seen is isomorphic to \(\U(m)\). Similarly, the maximal compact subgroup of \(\Sp(2m,\C)\) is \(\U(2m)\cap \Sp(2m,\C)=\Sp(m)\). For simplicity, let us denote by \(\Or\Sp(2n,\F)\subset\Sp(2n,\F)\) the maximal compact subgroup described above. 

 As mentioned before, the space \(X(2r,n,2r)\) is the Stiefel manifold of \(2r\)-symplectic frames in \(\F^{2n}\). To distinguish between \(\F = \R\) or \(\C\), let us denote this space as \(\Sp(2n,2r,\F)\), as well. Similarly, let \(\Or\Sp(2r,2n,\F)\) denote the subspace of orthonormal \(2r\)-symplectic frames in \(\F^{2n}\). Again, as mentioned in the proof of Lemma \ref{some-homeo-typeX} the Darboux basis implies that the groups \(\Or\Sp(2n,\F)\) and \(\Sp(2n,\F)\) act transitively over \(\Or\Sp(2n,2r,\F)\)  and \(\Sp(2n,2r,\F)\), respectively. Hence we have diffeomorphisms
\begin{align}
\Sp(2n,2r,\F)&\cong\Sp(2n,\F)/\Sp(2n-2r,\F)\label{symp-complex-Stiefel}
\end{align}
\begin{align}
\Or\Sp(2n,2r,\F)&\cong\Or\Sp(2n,\F)/\Or\Sp(2n-2r,\F)\,.\label{or-symp-complex-Stiefel}
\end{align}

\begin{remark}\label{rem-orframes=orsympframes}
Recall that \(D(\F)\) denotes the Cayley--Dickson doubling of \(\F\), so that \(D(\R)=\C\) and \(D(\C)=\mathbb{H}\). We emphasize that (\ref{or-symp-complex-Stiefel}) and the identification of the maximal compact subgroups of the symplectic groups yield a diffeomorphism 
\[\Or\Sp(2n,2r,\F)\cong V_r(D(\F)^n)\]
between the Stiefel manifold of orthogonal symplectic \(2r\)-frames in \(\F^{2n}\) and the Stiefel manifold of orthogonal \(r\)-frames in \(D(\F)^n\). The key observation for example when \(\F=\R\) is that the hermitian product carries two forms in its real and imaginary parts: the real inner product and the standard symplectic form.
\end{remark}

%

\begin{lemma}\label{lem:symp-complex-Stiefel}
There is a homotopy equivalence
\[\Or\Sp(2n,2r,\F)\simeq \Sp(2n,2r,\F)\]
\end{lemma}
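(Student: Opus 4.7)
The plan is to exploit the classical fact that a maximal compact subgroup of a Lie group is a deformation retract, and then propagate this through the relevant principal fiber bundles by a five lemma argument.

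First I would recall the two homogeneous space descriptions already set up in the excerpt: $\Sp(2n,2r,\F)\cong\Sp(2n,\F)/\Sp(2n-2r,\F)$ and $\Or\Sp(2n,2r,\F)\cong\Or\Sp(2n,\F)/\Or\Sp(2n-2r,\F)$. The key point is that the embedding $\Sp(2n-2r,\F)\hookrightarrow\Sp(2n,\F)$ as the stabilizer of the canonical $(\F^{2r},\omega_r)\hookrightarrow(\F^{2n},\omega_n)$ is compatible with the choice of maximal compact subgroup (namely, it is cut out by fixing the standard Hermitian inner product, which restricts to the standard Hermitian inner product on the orthogonal complement of $\F^{2r}$). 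Consequently one obtains a commutative ladder of principal bundles
\[
\xymatrix{
\Or\Sp(2n-2r,\F)\ar[r]\ar[d] & \Sp(2n-2r,\F)\ar[d]\\
\Or\Sp(2n,\F)\ar[r]\ar[d] & \Sp(2n,\F)\ar[d]\\
\Or\Sp(2n,2r,\F)\ar[r] & \Sp(2n,2r,\F)\,.
}
\]

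Next I would invoke the Cartan (equivalently Iwasawa) decomposition for the real semisimple groups $\Sp(2n,\R)$ and $\Sp(2n,\C)$, which implies that the inclusions of the maximal compact subgroups on the top two rows are homotopy equivalences. The fiber and total space maps of the vertical principal bundles are thus weak equivalences. Comparing the two long exact sequences of homotopy groups coming from these fibrations and applying the five lemma forces the induced map on bases $\Or\Sp(2n,2r,\F)\to\Sp(2n,2r,\F)$ to be a weak homotopy equivalence. Since both spaces are smooth manifolds (in particular have the homotopy type of CW complexes), Whitehead's theorem upgrades this to a genuine homotopy equivalence, giving the desired conclusion.

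The only delicate point, and essentially the single thing that needs real verification, is the compatibility of the embedding of the stabilizer subgroup with the choice of maximal compact subgroup, so that the diagram above actually commutes with the maximal compact inclusions. Once this is in place, the rest is a standard five-lemma and CW-approximation argument. No explicit computation of the Cartan decomposition, and no explicit matrix-level calculation, should be required beyond citing that $\Or\Sp(2n,\F)\hookrightarrow\Sp(2n,\F)$ is a homotopy equivalence.
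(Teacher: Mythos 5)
Your proposal matches the paper's own argument: both set up the map of fibrations coming from the homogeneous space descriptions and the inclusions of maximal compacts, and then apply the long exact sequence of homotopy groups together with the five lemma, upgrading the resulting weak equivalence to a genuine homotopy equivalence (the paper leaves the Whitehead/CW step implicit, you spell it out). Your extra remark about checking compatibility of the stabilizer embedding with the maximal compact choice is a welcome clarification of a point the paper takes for granted, but it does not change the route.
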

\begin{proof}
The inclusions \(\Or\Sp(2m,\F)\hookrightarrow\Sp(2m,\F)\), and diffeomorphisms (\ref{symp-complex-Stiefel}) and (\ref{or-symp-complex-Stiefel}) induce a map of fibrations \[\Or\Sp(2n,2r,\F)\to\Sp(2n,2r,\F).\] The long exact sequence of homotopy groups applied to this map of fibrations and the Five lemma, together show that the induced map is a weak homotopy equivalence, and hence a homotopy equivalence.
\end{proof}

\begin{theorem}\label{main-theorem}
Let \(n\geq 1\), and let \(\widetilde\He_{2n+1}(\F) = \He_{2n+1}(\F)/A\) be the reduced generalized Heisenberg group of dimension \(2n+1\), where \(A= \Z\) in the real case and \(A=\Z^2\) in the complex case. Then the map of ordered commutators in connected components
\[\pi_0(\Hom(\Z^k,\widetilde\He_{2n+1}(\F)))\to H^2(\Z^k;A)\]
is injective for every \(k\geq 2\), and surjective if and only if \(\lf\frac{k}{2}\rf\leq n\). Moreover, each component parametrized by a form \(\beta\in H^2(\Z^k;A)\) is homotopy equivalent to \[V_r(D(\F)^n)\times (S^1)^{k\dim_\R\F }\,,\] where \(2r=\mathrm{rank}(\beta^{\adj})\).
\end{theorem}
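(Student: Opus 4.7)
The plan is to transport everything to the Lie-algebra side via Proposition \ref{prop:AC-LieGrp-LieAlg} and invoke the already-proven Proposition \ref{prop-com-elem}. Since \(\He_{2n+1}(\F)\) is simply connected nilpotent with \(\widetilde{\He}_{2n+1}(\F) = \He_{2n+1}(\F)/\exp(A)\), the homeomorphism (\ref{hom&ComElLieAlg}) yields
\[\Hom(\Z^k, \widetilde{\He}_{2n+1}(\F)) \;\xrightarrow{\cong}\; B_k(\h_{2n+1}(\F), A)/A^k.\]
Moreover, because BCH identifies Lie brackets with commutators modulo the center, the map \(\pi_0(\Hom(\Z^k, \widetilde{\He}_{2n+1}(\F))) \to H^2(\Z^k; A)\) is obtained by passing to the \(A^k\)-quotient of the corresponding ordered-commutator map on \(B_k(\h_{2n+1}(\F), A)\).

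Next, I apply Proposition \ref{prop-com-elem}: its components biject with their images in \(H^2(\Z^k; A)\cong\Ant_k(A)\) (injectivity), and the surjectivity clause \(\lf k/2\rf \le n\) carries over unchanged. Each component over \(\beta\) is \(A^k\)-equivariantly homeomorphic to \(X(k,n,2r) \times \F^k\), where \(2r = \mathrm{rank}(\beta^{\adj})\), and by Lemma \ref{upper-bound-AlmCom} the \(A^k\)-action is trivial on \(X(k,n,2r)\) and by coordinatewise translation on \(\F^k\). The quotient therefore splits as
\[X(k,n,2r) \times (\F/A)^k \;\cong\; X(k,n,2r) \times (S^1)^{k \dim_\R \F},\]
using \(\R/\Z = S^1\) in the real case and \(\C/\Z^2 = (S^1)^2\) in the complex case. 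Since \(X(k,n,2r)\) is connected (Lemma \ref{some-homeo-typeX}(2), together with connectedness of \(\Sp(2n,\F)\) for \(\F=\R\) or \(\C\)), each such quotient is connected, and this immediately transfers both the injectivity and the image of the \(\pi_0\)-map from the Lie-algebra side to \(\widetilde{\He}_{2n+1}(\F)\).

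It remains to identify the homotopy type of the factor \(X(k,n,2r)\) as \(V_r(D(\F)^n)\). This is a direct chain of earlier results: Lemma \ref{some-homeo-typeX}(2) gives \(X(k,n,2r) \simeq \Sp(2n,2r,\F)\); Lemma \ref{lem:symp-complex-Stiefel} gives \(\Sp(2n,2r,\F) \simeq \Or\Sp(2n,2r,\F)\); and Remark \ref{rem-orframes=orsympframes} identifies \(\Or\Sp(2n,2r,\F) \cong V_r(D(\F)^n)\). Combining, each component parametrized by \(\beta\) is homotopy equivalent to \(V_r(D(\F)^n) \times (S^1)^{k \dim_\R \F}\), as claimed. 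The only delicate point is the bookkeeping of the \(A^k\)-equivariance through the identifications of Proposition \ref{prop-com-elem} and Lemma \ref{upper-bound-AlmCom}, to make sure the quotient splits cleanly into a product with the torus factor; but Lemma \ref{upper-bound-AlmCom} is designed precisely so that this works, so the proof is essentially assembly of the pieces already built and I do not anticipate any serious obstacle.
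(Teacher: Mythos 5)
Your proof is correct and follows essentially the same route as the paper: transport to the Lie-algebra side via (\ref{hom&ComElLieAlg}), read off injectivity/surjectivity and the component description $X(k,n,2r)\times(\F/A)^k$ from Proposition \ref{prop-com-elem}, and then identify $X(k,n,2r)\simeq V_r(D(\F)^n)$ via Lemma \ref{some-homeo-typeX}, Lemma \ref{lem:symp-complex-Stiefel}, and Remark \ref{rem-orframes=orsympframes}. The paper's proof is the same assembly, just stated more tersely, and your explicit tracking of the $A^k$-equivariance is precisely the bookkeeping Proposition \ref{prop-com-elem} and Lemma \ref{upper-bound-AlmCom} are set up to make automatic.
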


\begin{proof}
The injectivity and surjectivity statements follow from Proposition \ref{prop-com-elem}. Moreover, it implies that each path-connected component is homeomorphic to a respective \(X(k,n,2r)\times(\F/A)^{k}\). By Lemma \ref{some-homeo-typeX} and Lemma \ref{symp-complex-Stiefel} it follows that each \(X(k,n,2r)\times(\F/A)^{k}\) is homotopy equivalent to \((U(n)/U(n-r))\times (S^{1})^{k}\) in the real case and to \((\Sp(n)/\Sp(n-r))\times (S^{1})^{2k}\) in the complex case, as claimed.
\end{proof}

\begin{proof}[Proof of Theorem \ref{main-theoremST}]
The statement for the reduced groups of upper unitriangular matrices easily follows from Corollary \ref{cor: hom-equiv-comm-elem-Hien-Upper} and Theorem \ref{main-theorem}.
\end{proof} 

%

Recall that the moduli space of isomorphism classes of flat connection, or the representation space $\Rep(\Z^k,G)$ is the orbit space of $\Hom(\Z^k,G)$ under the conjugation action of $G$, and that the map that forgets the flat connection descends to the quotient, yielding  $\Rep(\Z^k,G)\to H^2(\Z^k,\pi_1(G))$.

\begin{corollary}\label{cor:rep-redHeis}
Let \(n\geq 1\). Then map that forgets the flat connection in path-connected components of representation spaces
\[\pi_0(\Rep(\Z^k,\widetilde\He_{2n+1}(\F)))\to H^2(\Z^k;A)\]
is injective for every \(k\geq 2\), and surjective if and only if \(\lf\frac{k}{2}\rf\leq n\). 
\end{corollary}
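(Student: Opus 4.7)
The plan is a straightforward diagram chase starting from Theorem \ref{main-theorem}. The forget-flat-structure assignment $\rho\mapsto\tilde\rho$ was noted in Section 2.1 to be invariant under the $G$-conjugation action on $\Hom(\Z^k,G)$, so it descends to a well-defined map on the representation space. This yields a factorization
\[
\pi_0(\Hom(\Z^k,\widetilde\He_{2n+1}(\F)))\xrightarrow{q_*}\pi_0(\Rep(\Z^k,\widetilde\He_{2n+1}(\F)))\xrightarrow{g} H^2(\Z^k;A),
\]
where $q_*$ is induced by the quotient map $q\colon \Hom\to\Rep$ and the composite $g\circ q_*$ is precisely the map that Theorem \ref{main-theorem} shows to be injective for every $k\geq 2$ and surjective if and only if $\lf k/2\rf\leq n$.

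Next I would observe that $q_*$ is surjective: since $q$ is a continuous surjection of topological spaces, every path component of $\Rep$ is hit by the image of some path component of $\Hom$ lying above it. From this, two elementary set-theoretic facts close the argument. First, if $g\circ q_*$ is injective and $q_*$ is surjective, then $g$ is injective; applied for $k\geq 2$ this yields the injectivity half. Second, since $q_*$ is surjective, the image of $g\circ q_*$ equals the image of $g$, so surjectivity of $g$ is equivalent to surjectivity of $g\circ q_*$, which by Theorem \ref{main-theorem} holds exactly when $\lf k/2\rf\leq n$.

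No genuine obstacle arises here: the two observations about composites of maps of sets are trivial, and the only fact beyond Theorem \ref{main-theorem} is the conjugation-invariance of $\rho\mapsto\tilde\rho$ that makes $g$ well defined, which the paper has already recorded. In other words, the corollary is extracted from the theorem by a purely formal argument once the forget-flat-structure map is confirmed to descend to the quotient.
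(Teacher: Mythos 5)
Your argument is correct and is essentially the same ``injective composite'' argument the paper uses: factor the forget-flat-structure map as $g\circ q_*$, note $q_*$ is surjective on $\pi_0$, and read off injectivity and surjectivity of $g$ from the known properties of $g\circ q_*$. If anything you are slightly more explicit than the paper, which states only the injectivity half of the chase and leaves the surjectivity half implicit.
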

\begin{proof}
It follows from a standard argument regarding injective composites. Indeed, the composite  $\Hom(\Z^k,\widetilde\He_{2n+1}(\F))\to \Rep(\Z^k,\widetilde\He_{2n+1}(\F))\to H^2(\Z^k;A)$ is injective in $\pi_0$ by Theorem \ref{main-theorem}, and hence the quotient map is a bijection in $\pi_0$, which in turn makes $\pi_0(\Rep(\Z^k,\widetilde\He_{2n+1}(\F)))\to H^2(\Z^k;A)$ an injection, as desired.
\end{proof}

\subsection{Application to classifying spaces for commutativity}\label{sec:looped-comm-map}

Recall that Heisenberg groups arise from symplectic vector spaces, for instance \(\He_{2n+1}(\R)\) arises from the antisymmetrization of the dot product in \(\R^n\), which we have been denoting by \(\omega_n\). Similarly, we can consider \(\He_{\infty}(\R)\), the Heisenberg group associated to the antisymmetrization of the dot product in \(\R^\infty\). Then, the inclusions \(\R^n=0\oplus\R^n\subset\R^{n+1}\) induce an isomorphism \(\He_{\infty}(\R)\cong\colim_{n}\He_{2n+1}(\R)\), which is equivariant with respect to the central \(\Z\)-action, and  hence it yields a homeomorphism
\[\Hom(\Z^k,\widetilde{\He}_{\infty}(\R))\cong\Hom(\Z^k,\colim_{n}\widetilde{\He}_{2n+1}(\R))\cong\colim_n\Hom(\Z^k,\widetilde{\He}_{2n+1}(\R))\,.\]
Then, by Theorem \ref{main-theorem} the path-connected components of \(\Hom(\Z^k,\widetilde{\He}_{\infty}(\R))\) are in bijection with $H^2(\Z^k;\Z) = \Ant_k(\Z)$ and are homotopy equivalent to \(V_r(\C^\infty)\times (S^1)^k\) for some \(r\). Since $V_r(\C^\infty)$ is contractible, the map that forgets the flat structure 
\[\Hom(\Z^k,\widetilde{\He}_{\infty}(\R))\xrightarrow{\simeq} \Ant_k(\Z)\times(S^1)^k\]
is a homotopy equivalence. Moreover, this equivalence is compatible with the following simplicial structure. For any topological group \(G\), the assignment \(k\mapsto \Hom(\Z^k,G)\) yields a simplicial space whose simplicial structure is obtained by the level-wise restriction of the nerve of \(G\), \(k\mapsto G^k\). We denote its geometric realization by \(B(2,G):=|k\mapsto \Hom(\Z^k,G)|\) (see \cite{AdemCohenTorres} and \cite{AdemGomez} for the basic theory of $B(2,G)$). Recall that \(\Hom(\Z^k,\widetilde{\He}_{2n+1}(\R))\) splits off \((S^1)^k\) for every \(n\geq 1\), then it is not hard to see that the forgetful map carries the simplicial structure described above, to the diagonal simplicial space \(k\mapsto\Ant_k(\Z)\times (S^1)^k\), with respect to the bisimplicial space whose second factor is the nerve of \(S^1\), and on the first factor the \(i\)-th face map simultaneously adds the \(i\)-th column to the \(i+1\) column and the \(i\)-th row to the \(i+1\) row. The geometric realization of \(k\mapsto \Ant_k(\Z)\) is a \(K(\Z,2)\), as it is the \(\Z\)-linearization of the 2-simplex modulo its boundary. Then upon taking realizations we obtain a homotopy equivalence 
\[B(2,\widetilde{\He}_{\infty}(\R))\xrightarrow{\simeq}BS^1\times BS^1\,,\]
in which the inclusion of the classifying map of the center \(BS^1\to B(2,\widetilde{\He}_{\infty}(\R))\) yields a splitting. 

For an arbitrary (well based) topological group $G$, if we consider the simplicial model of the classifying space \(BG\) given by the nerve of \(G\), the level-wise inclusions \(\Hom(\Z^k,G)\subset G^k\) induce a map \(\iota \colon B(2,G)\to BG\). The pullback of the universal $G$-bundle $EG\to BG$ along $\iota$ is is denoted \(E(2,G)\), which is also a model for the homotopy fiber of $\iota$. To understand what the map \(\iota\) looks like for \(G=\widetilde{\He}_{\infty}(\R)\), note that the inclusion of the center \(S^1\hookrightarrow\widetilde{\He}_{\infty}(\R)\) is a homotopy equivalence and hence \(BS^1\to B\widetilde{\He}_{\infty}(\R)\) is a homotopy equivalence, as well. Then, we can see that \(\iota\colon B(2,\widetilde{\He}_{\infty}(\R))\to B\widetilde{\He}_{\infty}(\R)\), up to homotopy, is the projection \(\pi_2\colon BS^1\times BS^1\to BS^1\) into the second factor, and we may further conclude that
\[E(2,\widetilde{\He}_{\infty}(\R))\simeq BS^1\,,\]
where the canonical map \(E(2,\widetilde{\He}_{\infty}(\R))\to B(2,\widetilde{\He}_{\infty}(\R))\) is homotopic to the inclusion into the first factor \(i_1\colon BS^1\to BS^1\times BS^1\). In \cite{A-CGV21} we defined a map \(\mathfrak{c}\colon E(2,G)\to B[G,G]\) arising from the commutator operation in \(G\), and it is an interesting question whether or not the looped commutator map, \(\Omega\mathfrak{c}\), splits up to homotopy, which in particular would imply that the commutator subgroup \([G,G]\) splits off \(\Omega E(2,G)\). In our example we already have the latter statement, that is \[\Omega E(2,\widetilde{\He}_{\infty}(\R))\simeq S^1=[\widetilde{\He}_{\infty}(\R),\widetilde{\He}_{\infty}(\R)]\,,\]
but we claim that the equivalence is of loop spaces via the looped commutator map. According to \cite[Remark 11]{A-CGV21} we may compute \(\mathfrak{c}\) for \(G=\widetilde{\He}_{\infty}(\R)\), up to homotopy, as the composite 
\[BS^1\xrightarrow{i_1} BS^1\times BS^1\xrightarrow{\phi}BS^1\times BS^1\xrightarrow{\pi_2} BS^1\]
where \(\phi\) is defined as follows: The group inverse \((-)^{-1}\colon G\to G\) induces a simplicial map \(B_\bullet(2,G)\to B_\bullet(2,G)\) and \(\phi^{-1}\) is the geometric realization. Then \(\phi\) is the bottom arrow of the square
\[
\xymatrix{
B(2,\widetilde{\He}_{\infty}(\R))\ar[d]_{\simeq}\ar[r]^{\phi^{-1}}&B(2,\widetilde{\He}_{\infty}(\R))\ar[d]_{\simeq}\\
BS^1\times BS^1\ar[r]_\phi&BS^1\times BS^1
}
\]
Since the inverse map for matrices in a generalized Heisenberg group is given by \(Id + \h(a,b,c)\mapsto Id + \h(-a,-b,ab - c)\), we may see that \(\phi\) maps a pair to its `quotient' into the second factor. In particular the commutator map up to homotopy is the identity \(BS^1\to BS^1\), and hence \(\Omega\mathfrak{c}\colon \Omega E(2,\widetilde{\He}_{\infty}(\R))\to \Omega BS^1\) is a homotopy equivalence of loop spaces. \\

There are very few examples known of connected Lie groups whose looped commutator map splits, the list so far being $SU(2)$, $U(2)$, and the stable special unitary group $SU$.

\subsection{Some explicit homeomorphism types}\label{examples}

For every generalized Heisenberg group we will exhibit the homeomorphism type of each component of the space of commuting pairs \(\Hom(\Z^2,\widetilde\He_{2n+1}(\R))\). For arbitrary \(k\), we will address the question of finding the parametrizing set of the components for the case of the reduced Heisenberg group, \(\widetilde{\He}_3(\R)\).

 It will be convenient to have in mind the interpretation of antisymmetric matrices in \(\F\) as the exterior square of \(\F^k\). It can be given via the isomorphism \(\Lambda^2\F^k {\to}\Ant_{k}(\F)\),  \(x\wedge y\mapsto xy^T-yx^T\), where we consider \(x,y\) as column vectors, and \(x^T,y^T\) denote their transpose matrices. We will identify a wedge of vectors in \(\F^k\), \(x\wedge y\), with its corresponding antisymmetric matrix \(xy^T-yx^T\) in \(\Ant_{k}(\F)\).

Let \(\{e_j\}_{j=1}^k\) be the canonical basis of \(\F^k\). For every \(0\leq r\leq \lf\frac{k}{2}\rf\) the antisymmetric matrix \(\Omega_{r}\) defined before has a representative
 \[\Omega_{r}\oplus 0_{k-2r}=e_1\wedge e_2+e_3\wedge e_4+\cdots+e_{2r-1}\wedge e_{2r}\,,\] 
 where here \(\oplus\) means block sum of matrices, and we let \(\Omega_0 = 0\).

Identifying \(\Hom(\F^k,\F^{2n})\) with \(\Mat_{2n\times k}(\F)\), one can easily verify that map \((\ref{varphi_U})\) takes the form
\begin{align*}
\varphi_{k,n}:=\colon \Mat_{2n\times k}(\F)&\to \Ant_{k}(\F)\\
M&\mapsto M^T\Omega_nM\,.
\end{align*}
In this notation we have that \(X(k,n,2r) = \{M\in \Mat_{2n\times k}(\F)\,|\,M^T\Omega_nM = \Omega_r\oplus 0_{k-2r}\}\).

\begin{remark}\label{simplectic-emb-form}
Let \(M\in \Mat_{2n\times k}(\F)\). For every \(1\leq i\leq n\), let \(x^{(i)}\) and \(y^{(i)}\) denote the \(2i-1\) and \(2i\) row
 vectors of \(M\), respectively. We define
\begin{align*}
\tilde\varphi_{k,n}\colon \Mat_{2n\times k}(\F)&\to \Ant_{k}(\F)\\
M&\mapsto \sum_{i=1}^n x^{(i)}\wedge y^{(i)}\,.
\end{align*}
We claim that \(\tilde{\varphi}_{k,n}=\varphi_{k,n}\). Indeed, we have
\[M^T\Omega_nM = \sum_{i=1}^nM^Te_{2i-1}e_{2i}^TM - M^Te_{2i}e_{2i-1}^TM = \sum_{i=1}^n x^{(i)}(y^{(i)})^T-y^{(i)}(x^{(i)})^T\,,\]
which is precisely the definition of \(\varphi_{k,n}(M)\).
\end{remark}

\subsubsection{Commuting pairs in reduced generalized Heisenberg groups}

We will focus in the case \(\F = \R\). Let \(C(X)\) denote the open cone over a space \(X\), namely \(C(X) := \left( [0,1)\times X \right)/\left( \{0\}\times X \right)\).

\begin{lemma}\label{some-homeo-typeX2}
Let \(k,n\geq1\). Then, we have the following homeomorphisms:
\begin{enumerate}
\item \(X(2,n,2)\cong S^{2n-1}\times \R^{2n}\).
\item \(X(2,n,0)\cong C(S^{2n-1}\times S^{2n-1})\).

\end{enumerate}
\end{lemma}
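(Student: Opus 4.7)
The plan is to diagonalize the real quadratic form $(x, y) \mapsto \omega_n(x, y)$ on $\R^{2n} \times \R^{2n}$ via an explicit linear change of coordinates, reducing both $X(2, n, 2)$ and $X(2, n, 0)$ to standard quadrics in $\R^{4n}$.

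The change of variables I will use is the linear automorphism $\Phi \colon \R^{2n} \times \R^{2n} \to \R^{2n} \times \R^{2n}$ given by $\Phi(x, y) = \bigl((x + \Omega_n y)/2,\ (x - \Omega_n y)/2\bigr)$, invertible because $\Omega_n$ is. Writing $(u, v) = \Phi(x, y)$ and using that $\Omega_n$ is orthogonal and satisfies $\Omega_n^T = -\Omega_n$, a short polarization computation yields
\[
|u|^2 - |v|^2 = \tfrac{1}{4}\bigl(|x + \Omega_n y|^2 - |x - \Omega_n y|^2\bigr) = x^T \Omega_n y = \omega_n(x, y).
\]
Therefore $\Phi$ restricts to homeomorphisms of $X(2, n, 2)$ onto the hyperboloid $H_1 := \{(u, v) : |u|^2 - |v|^2 = 1\}$ and of $X(2, n, 0)$ onto the light cone $H_0 := \{(u, v) : |u| = |v|\}$.

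For part~(1), I will write down the explicit homeomorphism $S^{2n-1} \times \R^{2n} \xrightarrow{\cong} H_1$, $(s, v) \mapsto \bigl(\sqrt{1 + |v|^2}\, s,\ v\bigr)$, whose inverse is $(u, v) \mapsto (u/|u|,\ v)$ (well-defined since $|u| \geq 1$ on $H_1$). For part~(2), I will exhibit a homeomorphism $C(S^{2n-1} \times S^{2n-1}) \xrightarrow{\cong} H_0$ by choosing any homeomorphism $\rho \colon [0, 1) \xrightarrow{\cong} [0, \infty)$ with $\rho(0) = 0$ and sending $[t, (s_1, s_2)] \mapsto (\rho(t)\, s_1,\ \rho(t)\, s_2)$; this maps the cone point to $(0, 0) \in H_0$, and its inverse sends $(u, v) \in H_0 \setminus \{0\}$ to $[\rho^{-1}(|u|),\, (u/|u|, v/|v|)]$.

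There is no serious obstacle here. The only non-routine step is the identity $|u|^2 - |v|^2 = \omega_n(x, y)$, which follows from a one-line expansion using $\Omega_n^T \Omega_n = I$ and $\Omega_n^T = -\Omega_n$; everything else amounts to checking that standard explicit formulas are bicontinuous.
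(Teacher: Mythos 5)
Your proof is correct and follows essentially the same approach as the paper: a linear change of coordinates diagonalizes $\omega_n(x,y)$ into $|u|^2-|v|^2$, reducing both spaces to a standard hyperboloid and a standard cone. Your substitution $\Phi(x,y)=((x+\Omega_n y)/2,(x-\Omega_n y)/2)$ packages compactly what the paper does component-wise with $x_1=u_1-v_1$, $y_2=u_1+v_1$, $x_2=v_2-u_2$, $y_1=v_2+u_2$, but the underlying diagonalization and the explicit homeomorphisms to $S^{2n-1}\times\R^{2n}$ and $C(S^{2n-1}\times S^{2n-1})$ are the same.
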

\begin{proof}

1. This case reduces to solve the equation \(x^{(1)}\wedge y^{(1)}+\cdots +x^{(n)}\wedge y^{(n)}  = e_1\wedge e_2\), or equivalently 
\[\sum_{i=1}^n\left|\begin{matrix}
x_{1}^{(i)}&x_{2}^{(i)}\\
y_{1}^{(i)}&y_{2}^{(i)}
\end{matrix}
\right| = 1\,.\]
For \(j=1,2\), let us consider \(x_j\) and \(y_j\) as the \(n\times 1\) column vectors whose \(i\)-th entry is \(x_{j}^{(i)}\) and \(y_{j}^{(i)}\), respectively. Then the above equation is \(x_1^Ty_2-x_2^Ty_1 = \Omega_2\). Let \(u_1,u_2,v_1,v_2\in \R^n\) and write \(x_1 = u_1-v_1\), \(y_2 = u_1 + v_1\),  \(x_2 = v_2 - u_2\), and \(y_1 = v_2 + u_2\). The equation now reads as
\[1+ \|v_1\|^2 + \|v_2\|^2 = \|u_1\|^2  + \|u_2\|^2\,. \]
We can express the solution set as the vectors \((u_1,u_2)\in \R^{2n}\) of a fixed length \(\sqrt{1+ \|v_1\|^2 + \|v_2\|^2}\), and we have one of such spheres for every \((v_1,v_2)\in\R^{2n}\). Explicitly the map \(X(2,n,2)\to S^{2n-1}\times\R^{2n}\) that sends \((x_1,x_2,y_1,y_2)\) to
\[\left(\frac{x_1+y_2}{\sqrt{4+ \|y_1+x_2\|^2 + \|y_2-x_1\|^2}},\frac{y_1-x_2}{\sqrt{4+ \|y_1+x_2\|^2 + \|y_2-x_1\|^2}},\frac{y_2-x_1}{2},\frac{y_1+x_2}{2}\right)\]
is well defined, and is clearly a homeomorphism.

2. The equation defining \(X(2,n,0)\) is \(x^{(1)}\wedge y^{(1)}+\cdots +x^{(n)}\wedge y^{(n)}  = 0\), and the same change of variables as in the proof of part 1 yields the identity \(\|v_1\|^2 + \|v_2\|^2 = \|u_1\|^2  + \|u_2\|^2\), and as long as \(v_1\) or \(v_2\ne 0\) a similar reasoning as above gives a homeomorphism \[X(2,n,0)-\{0\}\cong S^{2n-1}\times(\R^{2n}-\{0\})\,.\]
We can see that the radius of the spheres decreases as the \(\R^{2n}\)-coordinate tends to zero, and since the above homeomorphism in turn induces a homeomorphism \(X(2,n,0)-\{0\}\cong S^{2n-1}\times S^{2n-1}\times (0,1)\), we may conclude that after adjoining the zero map we obtain the cone over \(S^{2n-1}\times S^{2n-1}\) as claimed.
\end{proof}


\begin{proposition}\label{alcom-pairs}
For every \(n\geq 1\) there is a homeomorphism
\[\Hom(\Z^2, \widetilde{\He}_{2n+1}(\R))\cong \left(C(S^{2n-1}\times S^{2n-1})\sqcup\bigsqcup_{\Z-\{0\}}S^{2n-1}\times\R^{2n}\right)\times(S^1)^2  \,,\] 
where the component \(C(S^{2n-1}\times S^{2n-1})\times(S^1)^2\) is the path-connected component of the trivial representation. 
\end{proposition}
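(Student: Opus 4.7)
The strategy is to assemble the previous results into an explicit description: pass from the Lie group to its Lie algebra via the exponential, split off the center, classify fibers by rank, and apply the explicit homeomorphism types from Lemma \ref{some-homeo-typeX2}.

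First, I invoke the $A^k$-equivariant homeomorphism (\ref{hom&ComElLieAlg}) to identify
\[\Hom(\Z^2,\widetilde{\He}_{2n+1}(\R)) \cong B_2(\h_{2n+1}(\R),\Z)/\Z^2.\]
Next, I apply Lemma \ref{upper-bound-AlmCom} specialized to $\g=\h_{2n+1}(\R)$, $A=\Z\subset Z(\h_{2n+1}(\R))=\R$, and $k=2$; since $[\h_{2n+1}(\R),\h_{2n+1}(\R)] = Z(\h_{2n+1}(\R))$, the hypothesis is satisfied and I obtain an $\Z^2$-equivariant homeomorphism
\[B_2(\h_{2n+1}(\R),\Z) \cong \varphi_{2,n}^{-1}(\Ant_2(\Z)) \times \R^2,\]
where $\Z^2$ acts trivially on the first factor and by translation on the second. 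Taking the quotient then yields
\[\Hom(\Z^2,\widetilde{\He}_{2n+1}(\R)) \cong \varphi_{2,n}^{-1}(\Ant_2(\Z)) \times (S^1)^2.\]

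Now I decompose $\varphi_{2,n}^{-1}(\Ant_2(\Z))$ by identifying $\Ant_2(\Z)\cong\Z$ (via the $(1,2)$-entry) and partitioning into fibers. Every non-zero $a\in\Z$ gives an antisymmetric matrix of rank $2$, while $a=0$ gives rank $0$; by Lemma \ref{rank-inv} all fibers over non-zero integers are homeomorphic to $X(2,n,2)$, and the fiber over $0$ is precisely $X(2,n,0)$. Hence
\[\varphi_{2,n}^{-1}(\Ant_2(\Z)) \cong X(2,n,0) \sqcup \bigsqcup_{\Z-\{0\}} X(2,n,2).\]
Substituting the explicit homeomorphisms from Lemma \ref{some-homeo-typeX2}, namely $X(2,n,2)\cong S^{2n-1}\times\R^{2n}$ and $X(2,n,0)\cong C(S^{2n-1}\times S^{2n-1})$, gives the stated description.

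Finally, the trivial representation $\Z^2\to\widetilde{\He}_{2n+1}(\R)$ corresponds under (\ref{hom&ComElLieAlg}) to the origin $0\in\h_{2n+1}(\R)^2$, whose bilinear form is $0\in\Ant_2(\Z)$; this lies in the $X(2,n,0)$ fiber, so its path-component is $C(S^{2n-1}\times S^{2n-1})\times(S^1)^2$, which is connected since the cone is contractible. Essentially every step is a direct appeal to an earlier lemma and the only mild subtlety is keeping track of which $A^k$-action is trivial and which is by translation at the step of forming the quotient; there are no genuine obstacles since all the homotopical content has been packaged into the auxiliary lemmas already proven.
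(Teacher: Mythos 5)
Your proof is correct and follows essentially the same route as the paper: pass through equivalence (\ref{hom\&ComElLieAlg}), apply the $A^k$-equivariant decomposition of Lemma \ref{upper-bound-AlmCom}, sort the fibers of $\varphi_{2,n}$ over $\Ant_2(\Z)\cong\Z$ by rank using Lemma \ref{rank-inv}, and substitute the explicit homeomorphism types of Lemma \ref{some-homeo-typeX2} before taking the $\Z^2$-quotient. The paper's write-up is more compressed (it takes the quotient at the very end and refers implicitly to the rank classification), but the logical content and the lemmas invoked coincide with yours.
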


\begin{proof}
First, note that the map \(\varphi_n\colon \Mat_{2n\times 2}(\R)\to \Aut_{2\times 2}(\R)\cong \R\) is surjective, since this is a non-zero bilinear map. By Lemma \ref{upper-bound-AlmCom}, Lemma \ref{some-homeo-typeX} part 2, and Lemma \ref{some-homeo-typeX2} part 1, for every \(n\geq 1\) there is a homeomorphism
\[B_2(\He_{2n+1}(\R),\Z)\cong C(S^{2n-1}\times S^{2n-1})\times \R^2\sqcup\left(\bigsqcup_{\Z-\{0\}}S^{2n-1}\times\R^{2n}\right)\times\R^2  \,,\] 
where the component \(C(S^{2n-1}\times S^{2n-1})\times\R^2\) corresponds to \(\Hom(\Z^2,\He_{2n+1}(\R))\). The right hand side union corresponds to the rank 2 matrices. The result follows after taking the \(\Z^2\)-orbit space as in (\ref{hom&ComElLieAlg}).
\end{proof}

\subsubsection{Commuting elements in the reduced Heisenberg group}

Let us briefly recall that every element in \(B_k(\h_{3}(\R),\Z)\) is a \(2\times k\) matrix \(M\) such that the every minor of \(M\) is an integer. This data is encoded in the vector \(\varphi_{k,1}(M)\) belonging to the subset \(\Ant_k(\Z)\) of the exterior algebra \(\bigwedge^{2}\R^k\cong\Ant_k(\R)\) whose coordinates are the ordered \(2\times 2\) minors of \(M\). The correspondence \(M\mapsto \varphi_{k,1}(M)\) is injective, up to change of basis. Indeed, two \(2\times k\) matrices, \(M\) and \(M^\prime\), with equal minors span the same 2-plane, hence there is a matrix \(A\in \SL(2,\R)\) such that \(M=AM^\prime\). In other words, upon passing to \(\SL(2,\R)\)-orbits, \(\varphi_{k,1}\) induces an embedding 
\begin{align}
(\Mat_{2\times k}(\R)-X(k,1,0))/\SL(2,\R)\hookrightarrow \bigwedge^{2}\R^k-\{0\}\,.\label{varphi-k-1}
\end{align}
Perhaps a more familiar interpretation of the quotient space \((\Mat_{2\times k}(\R)-X(k,1,0))/\SL(2,\R)\) is as the punctured affine cone of the Grassmannian of 2-planes, \(\mathrm{Gr}(2,\R^k)\). In particular the Pl\"ucker relations show that the map (\ref{varphi-k-1}) is not surjective.

 It follows from (\ref{varphi-k-1}) that \(X(k,1,2)\) is homeomorphic to \(S^1\times\R^2\). In the last section we will show that \(X(k,1,0)-\{0\}\cong (S^{k-1}\times_{\Z/2} S^1)\times \R\). In the mean time we use this to prove:

\begin{proposition}\label{prop:comm-elements-red-heisen}
For every \(k\geq 3\), the non-identity path-connected components of \(\Hom(\Z^k,\widetilde\He_3(\R))\) are in 1-1 correspondence with the set \(\mathcal{M}\) of integral Pl\"ucker coordinates of the affine cone of the Grassmannian of 2-planes \(C(\mathrm{Gr}(2,\R^k))\). Moreover, there is a homeomorphism
\[\Hom(\Z^k,\widetilde\He_3(\R))\cong \left(C(S^{k-1}\times_{\Z/2} S^1)\sqcup\bigsqcup_{\mathcal{M}}S^1\times\R^2\right)\times(S^1)^k\,.\]
\end{proposition}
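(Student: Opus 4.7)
My plan is to reduce the proposition to explicit descriptions of the fibers of $\varphi_{k,1}$, using the framework built earlier in the paper, with one step postponed to the final section.

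First, combining Proposition \ref{prop:AC-LieGrp-LieAlg} for $G = \He_3(\R)$, $A = \Z$ with Lemma \ref{upper-bound-AlmCom} for $\g = \h_3(\R)$ gives a $\Z^k$-equivariant homeomorphism
$$\Hom(\Z^k, \widetilde{\He}_3(\R)) \cong \varphi_{k,1}^{-1}(\Ant_k(\Z)) \times (S^1)^k,$$
with $\Z^k$ acting trivially on the first factor. The space $\varphi_{k,1}^{-1}(\Ant_k(\Z))$ splits as a disjoint union of fibers $\varphi_{k,1}^{-1}(B)$ over $B$ in the image of $\varphi_{k,1}$. For $n=1$ the map $\varphi_{k,1}$ sends a $2 \times k$ matrix to the wedge of its two rows, so its image in $\bigwedge^2 \R^k$ is the affine cone on $\Gr(2,\R^k)$ under the Pl\"ucker embedding. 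Intersecting with $\Ant_k(\Z)$ then yields $\{0\} \sqcup \mathcal{M}$, which, once path-connectedness of each fiber is established in the next two paragraphs, upgrades to the stated bijection between non-identity components and $\mathcal{M}$.

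For a non-zero $B \in \mathcal{M}$ I will show $\varphi_{k,1}^{-1}(B) \cong \SL(2,\R)$. Left multiplication by $\SL(2,\R)$ preserves $\varphi_{k,1}$ because $\varphi_{k,1}(AM) = (\det A)\,\varphi_{k,1}(M)$; transitivity on the fiber is exactly the argument already recorded in the paragraph containing (\ref{varphi-k-1}) (equal minors give the same row span, so two matrices in the fiber are related by some $A \in \SL(2,\R)$), and the action is free because any $M \in \varphi_{k,1}^{-1}(B)$ has rank $2$ and hence its columns span $\R^2$. A standard Iwasawa ($KAN$) decomposition then identifies $\SL(2,\R)$ with $S^1 \times \R^2$ as a topological space.

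For the identity component I will invoke the deferred result, announced in the text and proved in the paper's last section, that $X(k,1,0)-\{0\} \cong (S^{k-1}\times_{\Z/2} S^1)\times \R$, with the $\R$-factor parametrizing scaling toward the zero matrix; adjoining the origin then realizes $X(k,1,0)$ as the open cone $C(S^{k-1}\times_{\Z/2} S^1)$, and this component contains the trivial representation. Assembling all of the above yields the claimed homeomorphism. The main obstacle is this last step: the explicit topological identification of $X(k,1,0)$ is non-formal and rests on the stratified analysis of the rank filtration carried out only in the last section of the paper.
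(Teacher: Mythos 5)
Your proposal is correct and follows essentially the same route the paper takes (indeed, the paper gives no separate formal proof; the argument is exactly the preceding discussion of $\varphi_{k,1}$ as an $\SL(2,\R)$-torsor onto the punctured affine cone of $\Gr(2,\R^k)$, combined with the deferred identification of $X(k,1,0)$). You have merely made explicit a few steps the paper leaves implicit, such as the freeness of the $\SL(2,\R)$-action and the $\det$-equivariance $\varphi_{k,1}(AM)=(\det A)\varphi_{k,1}(M)$.
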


\section{Representation spaces in reduced generalized Heisenberg groups}

In light of Remark \ref{rem:G-action-on-ComLieAlg}, it will be easier to describe first the quotient space \(B_k(\h_{2n+1}(\F),A)\) under the conjugation action of \(\He_{2n+1}(\F)\) in order to better understand the topology of \(\Rep(\Z^k,\widetilde\He_{2n+1}(\F))\).

\subsection{The \(\He_{2n+1}(\F)\) conjugation action on \(B_k(\h_{2n+1}(\F),A)\)}

 For simplicity, we will denote the elements of the Heisenberg algebras as
\[\h_{2n+1}(a,b,z) := \st_{n+2}(a,b,0,z) \,,\]
where \(a\in \Mat_{1\times n}(\F)\), \(b\in \Mat_{n\times 1}(\F)\) and \(z\in \F\). Let us also identify an abelian subgroup \(A\subset \F\) with the central subgroup in \(\h_{2n+1}(\F)\) given by
\[ \{\h_{2n+1}(0,0,z): z\in A\}\,.\]

 Let us first spell out the action over a single \(\h_{2n+1}(\F)\), or in other words, what the adjoint representation of \(\He_{2n+1}(\F)\) looks like. The elements in the orbit of  \(\h_{2n+1}(a,b,z)\in\h_{2n+1}(\F)\) are of the form
 \begin{align*}
\exp(\h_{2n+1}(x,y,w))\cdot\h_{2n+1}(a,b,z) &= \h_{2n+1}(a,b,z) + \h_{2n+1}(0,0,ay-xb) \\
&= \h_{2n+1}(a,b, z+\omega_n((a,b),(x,y)))
\end{align*}
for some \(x\in\Mat_{1\times n}(\F)\), \(y\in\Mat_{n\times 1}(\F)\) and \(z\in\F\), where we identify \((x,y)\) with a vector in \(\F^{2n}\) in the obvious way. The orbit of an arbitrary tuple \((f,z)\in \Hom(\F^k,\F^{2n})\times\F^k\) is then 
\begin{align}
(\exp(\h_{2n+1}(x,y,w)),(f,z))\mapsto (f, (f^*(\omega_n^{\adj}(x,y))(e_1),...,f^*(\omega_n^{\adj}(x,y))(e_k)) + z )\,.\label{conj-action}
\end{align}
 Note that since \(\omega_n^{\adj}\) is an isomorphism, the image of \((x,y)\mapsto f^*(\omega_n^{\adj}(x,y))\) is the same as the image of \(f^*\), hence we can conclude that the \(\He_{2n+1}(\F)\)-orbit under linear conjugation of the \(k\)-tuple represented by \((f,z)\) is homeomorphic to \(\{f\}\times\mathrm{coker}(D\circ f^*)\), where \(D\) is the isomorphism \(D\colon (\F^k)^*\to \F^k\) induced by the canonical basis. Moreover, the quotient map onto the space of orbits is given by 
\begin{align}\label{orbitmap}
B_k(\h_{2n+1}(\F),A)&\to B_k(\h_{2n+1}(\F),A)/\He_{2n+1}(\F)\\
(f,z)&\mapsto (f,z+\mathrm{im}(D\circ f^*))\,.\nonumber
\end{align}
We can give a simpler description of map (\ref{orbitmap}) and the orbit space. Recall that Proposition \ref{prop-com-elem}  every component of \(B_k(\h_{2n+1}(\F),A)\) is homeomorphic to \(X(k,n,2r)\times \F^k\), for some \(r\leq \lf \frac{k}{2}\rf\). We will now describe a subspace that completely represents the orbit space, up to homeomorphism. Let
 \[E(k,n,2r):=\{(f,v)\in X(k,n,2r)\times\F^k:v\in\ker f\}\,.\]

\begin{lemma}\label{conj-identif}
The inclusion \(E(k,n,2r)\hookrightarrow X(k,n,2r)\times\F^k\) induces a homeomorphism 
\[E(k,n,2r)\xrightarrow{\cong}(X(k,n,2r)\times\F^k)/\He_{2n+1}(\F)\,,\]
whose inverse is induced by the retraction \(X(k,n,2r)\times \F^k\to E(k,n,2r)\) given by \((f,z)\mapsto (f,\rho_f(z))\), where \(\rho_f\colon \F^k\to \ker f\) is the orthogonal projection 
\end{lemma}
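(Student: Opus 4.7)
The plan is to exhibit $E(k,n,2r)$ as a continuous transversal to the $\He_{2n+1}(\F)$-orbits on $X(k,n,2r)\times\F^k$ using orthogonal projection, and to verify that the resulting continuous bijection has a continuous inverse. The starting point is the action formula~(\ref{conj-action}): the group fixes the first coordinate and translates the second by elements of $\im(D\circ f^*\circ\omega_n^{\adj})$, which equals $\im(D\circ f^*)$ since $\omega_n^{\adj}$ is an isomorphism.

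The essential algebraic input is the identification $\im(D\circ f^*)=(\ker f)^{\perp}$ with respect to the standard inner product on $\F^k$. This holds because $\im(f^*)$ is the annihilator of $\ker f$ in $(\F^k)^*$ and the canonical isomorphism $D\colon(\F^k)^*\to\F^k$ coming from the standard basis carries annihilators to orthogonal complements. Hence the orbit of $(f,z)$ is $\{f\}\times(z+(\ker f)^{\perp})$, and the orthogonal decomposition $\F^k=\ker f\oplus(\ker f)^{\perp}$ places exactly one element of $\ker f$, namely $\rho_f(z)$, in each coset. This gives the set-theoretic bijection between $E(k,n,2r)$ and the orbit space, and the composite
\[
E(k,n,2r)\hookrightarrow X(k,n,2r)\times\F^k\xrightarrow{\pi}(X(k,n,2r)\times\F^k)/\He_{2n+1}(\F)
\]
is a continuous bijection $\iota$, whose candidate inverse is the descent through $\pi$ of the retraction $r(f,z)=(f,\rho_f(z))$.

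The delicate step, and the main obstacle, is continuity of the inverse---equivalently, of $r$. This is subtle because $\dim\ker f$ need not be locally constant on $X(k,n,2r)$: the stratum condition $f^*\omega_n=\omega_r\oplus 0_{k-2r}$ forces $f|_{\F^{2r}}$ to be an injective symplectic embedding (so that $\ker f\subset\{0\}\oplus\F^{k-2r}$), but the rank of $f|_{\F^{k-2r}}$ may jump as $f$ varies, and orthogonal projection onto $\ker f$ is a priori only continuous on each rank stratum. I would handle this by characterising $\rho_f(z)$ as the unique $v\in\F^k$ satisfying $f(v)=0$ and $v-z\in\im(D\circ f^*)$, and then establishing continuity of $v$ in $(f,z)$ by using the decomposition $\F^{2n}=f(\F^{2r})\oplus f(\F^{2r})^{\omega_n}$---which is stable across $X(k,n,2r)$ because $f|_{\F^{2r}}$ is always injective with symplectic image---to reduce to continuity of a fixed linear-algebraic construction. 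With $r$ continuous, its factorisation through $\pi$ is the sought continuous inverse of $\iota$, so $\iota$ is a homeomorphism.
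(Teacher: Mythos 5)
Your algebraic setup is correct and matches the paper's: you correctly identify $\im(D\circ f^*)=(\ker f)^\perp$, the orbit $\{f\}\times(z+(\ker f)^\perp)$, and the unique transversal point $(f,\rho_f(z))$ in $E(k,n,2r)$. You also rightly flag the delicate point that the paper's own proof does not address: by the universal property of the quotient topology, the inverse of the induced bijection is continuous if and only if the retraction $r(f,z)=(f,\rho_f(z))$ is, and this is not automatic because $\dim\ker f$ is not locally constant on $X(k,n,2r)$ when $k>2r$. The paper's proof establishes the continuous bijection and simply writes down the inverse formula without discussing its continuity.

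Your proposed repair, however, does not close this gap. The decomposition $\F^{2n}=f(\F^{2r})\oplus f(\F^{2r})^{\omega_n}$ constrains only $f|_{\F^{2r}}$ (which is always injective); it says nothing about $f|_{\F^{k-2r}}$, and that is exactly where $\ker f$ sits and where its dimension jumps. Re-characterizing $\rho_f(z)$ as the unique $v\in\ker f$ with $v-z\in(\ker f)^\perp$ does not change it as a function of $(f,z)$, so it cannot rescue continuity. And continuity genuinely fails: with $k=2$, $n=1$, $r=0$, take $f_t=\mathrm{diag}(t,0)\in X(2,1,0)$ and $z=(1,0)$. For $t\neq 0$ one has $\ker f_t=\{0\}\times\F$ and $\rho_{f_t}(z)=(0,0)$, while $\rho_{f_0}(z)=(1,0)$; so $r$ is discontinuous at $(f_0,z)$, and the induced bijection is not a homeomorphism with this inverse. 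The statement is correct on each rank stratum $X(k,n,2r)_d$, where $\dim\ker f=k-d$ is constant and $\rho_f$ varies continuously with $\ker f\in\Gr(k-d,\F^k)$, and it is correct globally when $k=2r$ (trivial kernel throughout); but for $k>2r$ the argument as you and the paper structure it runs directly into this counterexample, so either the hypotheses must be restricted or a genuinely different mechanism is needed.
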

\begin{proof}
Recall that the composite \(\ker f\hookrightarrow\F^k\to\mathrm{coker}(D\circ f^*)\) is an isomorphism. Therefore the quotient map \(E(k,n,2r)\to(X(k,n,2r)\times\F^k)/\He_{2n+1}(\F)\) given by \((f,v)\to (f,v+\mathrm{im}(D\circ f^*))\) is a continuous bijection with inverse \((f,z+\mathrm{im}(D\circ f^*))\mapsto (f,\rho_f(z))\).
\end{proof}

\begin{remark}\label{remark not-equivariant}
If \(A\ne\{0\}\), then \(E(k,n,2r)\) is not \(A^k\)-invariant with respect to the central action, unless \(k=2r\). In particular the homeomorphism of Lemma \ref{conj-identif} is not \(A^k\)-equivariant. Instead, the \(A^k\)-action that  \(E(k,n,2r)\) inherits from the RHS orbit space is given by \[y\cdot(f,v)=(f,v+\rho_f(y))\,.\]
We will refer to this action as the reduced central action.
\end{remark}

\begin{proposition}
Every path-connected component of \(B_k(\h_{2n+1}(\F),A)/\He_{2n+1}(\F)\) is \(A^k\)-homeomorphic to some \(E(k,n,2r)\) with respect to the reduced central action of \(A^k\). Moreover, the quotient map 
\[B_k(\h_{2n+1}(\F),A)\xrightarrow{\simeq} B_k(\h_{2n+1}(\F),A)/\He_{2n+1}(\F)\]
is a homotopy equivalence.
\end{proposition}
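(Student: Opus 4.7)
The plan is to reduce the proposition to the component-wise analysis already developed. By Proposition \ref{prop-com-elem}, $B_k(\h_{2n+1}(\F),A)$ decomposes as a disjoint union of path-connected components, each $A^k$-homeomorphic to $X(k,n,2r)\times\F^k$ for some $0\leq r\leq\lf\tfrac{k}{2}\rf$. Since $A\subset Z(\h_{2n+1}(\F))$, the central $A^k$-action commutes with the $\He_{2n+1}(\F)$-conjugation action, and formula \eqref{conj-action} shows the conjugation action fixes the first coordinate $f$, hence preserves each component $X(k,n,2r)\times\F^k$. It therefore suffices to analyze a single such component as an $A^k$-space under the residual conjugation action.

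For the $A^k$-homeomorphism assertion, I would apply Lemma \ref{conj-identif} to identify $(X(k,n,2r)\times\F^k)/\He_{2n+1}(\F)\cong E(k,n,2r)$ and observe that, under this identification, the residual $A^k$-action is precisely the reduced central action described in Remark \ref{remark not-equivariant}: the additive action of $y\in A^k$ on the second coordinate, followed by the retraction $z\mapsto\rho_f(z)$ onto $\ker f$, yields $y\cdot(f,v)=(f,v+\rho_f(y))$.

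For the homotopy-equivalence assertion, the key observation is that by \eqref{conj-action} each $\He_{2n+1}(\F)$-orbit through $(f,z)$ is the affine translate $\{f\}\times(z+\im(D\circ f^*))$ of the orthogonal complement $(\ker f)^\perp\subset\F^k$; in particular, $E(k,n,2r)$ picks out the unique orthogonal representative of every orbit. The linear homotopy
\[
H\colon I\times X(k,n,2r)\times\F^k\to X(k,n,2r)\times\F^k,\qquad (t,f,z)\mapsto(f,(1-t)z+t\rho_f(z))
\]
then moves each point along its own $\He_{2n+1}(\F)$-orbit to its representative in $E(k,n,2r)$, exhibiting $E(k,n,2r)$ as a strong deformation retract of $X(k,n,2r)\times\F^k$. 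Composing with the homeomorphism of Lemma \ref{conj-identif} identifies the restricted quotient map with this deformation retraction up to homotopy, and reassembling over the components of $B_k(\h_{2n+1}(\F),A)$ delivers the global statement. The main obstacle I anticipate is the continuity of $\rho_f$: the dimension of $\ker f$ need not be constant on $X(k,n,2r)$ when $k>2r$, so orthogonal projection onto $\ker f$ could fail to be jointly continuous. A safe workaround is to stratify $X(k,n,2r)$ by the rank of $f$ and glue the linear homotopy across strata, or equivalently to replace orthogonal projection by any continuous global section of the affine orbit bundle $(f,z+\im(D\circ f^*))\mapsto(f,z)$.
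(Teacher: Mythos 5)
Your treatment of the first assertion is the same as the paper's: it is a direct application of Lemma \ref{conj-identif} together with the decomposition from Proposition \ref{prop-com-elem}, and your identification of the residual $A^k$-action with the reduced central action of Remark \ref{remark not-equivariant} is correct.

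For the second assertion, however, the concern you raise about $\rho_f$ is genuine and fatal to the homotopy as written: when $k>2r$ the variety $X(k,n,2r)$ contains matrices of varying rank (indeed the paper later stratifies it by exactly this invariant), so $\dim\ker f$ jumps and orthogonal projection onto $\ker f$ is \emph{not} continuous in $f$. Your proposed repairs do not obviously go through either: since $\im(D\circ f^*)$ has varying dimension, the ``affine orbit bundle'' $(f,z)\mapsto(f,z+\im(D\circ f^*))$ is not a fiber bundle and there is no continuous section picking out the orthogonal representative, while gluing across rank strata runs into precisely the discontinuity you are trying to avoid. The paper circumvents all of this by \emph{not} attempting to deformation retract $X(k,n,2r)\times\F^k$ onto $E(k,n,2r)$. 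Instead it contracts $E(k,n,2r)$ onto the zero section $X(k,n,2r)\times\{0\}$ via the linear homotopy $((f,v),t)\mapsto(f,tv)$, which is continuous because it relies only on $\ker f$ being a linear subspace and never invokes $\rho_f$. Combining this with the evident contraction of $X(k,n,2r)\times\F^k$ onto the same subspace $X(k,n,2r)\times\{0\}$, and observing that the quotient map restricts to the identity on this common subspace, yields the homotopy equivalence of each component (and hence of the full quotient map) while sidestepping the continuity of $\rho_f$ entirely. You should replace your $\rho_f$-based deformation by this zero-section contraction.
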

\begin{proof}
Let \(\beta\in H^2(\Z^k;A)\). The path-connected component of \(B_k(\h_{2n+1}(\F),A)/\He_{2n+1}(\F)\) representing \(\beta\) is either empty or has the homotopy type of \(X(k,n,2r)\) where \(r\) is the rank of \(\beta\). Indeed, let \(\tilde{\varphi}_{n,k}\colon B_k(\h_{2n+1}(\F),A)/\He_{2n+1}(\F)\to \Ant_k(A)\) be the induced map of \(\varphi_{k,n}\) in the quotient. Suppose that \(\tilde{\varphi}_{k,n}^{-1}(\beta)\) is non empty. Then by Lemma \ref{conj-identif}, \(\tilde{\varphi}_{k,n}^{-1}(\beta)\) is homeomorphic to \(E(k,n,2r)\). The linear homotopy \(((f,v),t)\mapsto (f,tv)\) gives a deformation of \(E(k,n,2r)\) into \(X(k,n,2r)\), as claimed.
\end{proof}

\subsection{The reduced \(A^k\)-action}

To obtain the representation space \(\Rep(\Z^k,\widetilde\He_{2n+1}(\F))\) from \(B_k(\h_{2n+1}(\F),A)/\He_{2n+1}(\F)\) we need to take a further quotient by the \(A^k\)-action (see Remark \ref{rem:G-action-on-ComLieAlg}). It is clear that it commutes with the conjugation action (\ref{conj-action}). Then we can write the quotient map after both actions as

\begin{align*}
B_k(\h_{2n+1}(\F),A)&\to\Rep(\Z^k,\widetilde\He_{2n+1}(\F))\\
(f,z)&\mapsto (f,z+\mathrm{im}(D\circ f^*)+A^k)=(f,\rho_f(z)+\rho_f(A^k))\,.
\end{align*}
as noted in Remark \ref{remark not-equivariant}, where \(\rho_f\colon \F^k\to \ker f\) is the orthogonal projection. 

\begin{remark}In particular, if we represent an element in \(\Hom(\Z^k, \widetilde\He_{2n+1}(\F))\) as a pair \((f,z+A^k)\), where \(f\colon \F^k\to \F^{2n}\) and \(z\in\F^k\), then the orbit map with respect to the \(\widetilde\He_{2n+1}(\F)\) conjugation action over \(\Hom(\Z^k,\widetilde\He_{2n+1}(\F))\), is given by
\begin{align}
\Hom(\Z^k, \widetilde\He_{2n+1}(\F))&\to \Rep(\Z^k, \widetilde\He_{2n+1}(\F))\nonumber\\
(f,z+A^k)&\mapsto(f,\rho_f(z)+\rho_f(A^k))\,.\nonumber
\end{align}
\end{remark}

Let us now give an example of the reduced \(A^k\)-action on the component of the trivial representation of the smallest orbit space, \(B_2(\h_3(\R),\Z)/\He_3(\R)\).

\begin{example}\label{example-irrational-tori}
Consider \(E(2,1,0)\) with \(\F=\R\). We can represent these elements by pairs \((M,v)\), where \(M\) is a real \(2\times 2\) zero determinant matrix, \(v\in\R^2\) and \(Mv=0\). Let \(\mu\) be an irrational number and define 
\[M = \left(
\begin{matrix}
\mu&1\\
\mu&1
\end{matrix}\right)\,.\]
Then \(v\in \ker M\) if and only if it lies on the line with slope \(-\mu\). In particular, the orthogonal projection \(\rho_M\colon \R^2\to \ker M \) maps \(\Z^2\) into a rank 2 abelian subgroup, that is, \(\rho_M(\Z^2)\subset \ker M \) is a dense subspace. Thus, the orbit of an element \((M,v)\) is homeomorphic to \(\R/(\Z+\mu\Z)\) an ``irrational torus" which as a topological space is an indiscrete space.
\end{example}

In light of the pathological behavior of the \(A^k\)-action over \(E(k,n,2r)\), a common alternative is to consider the \(\mathcal{T}_1\)-quotient or the maximal Hausdorff quotient of \(\Rep(\Z^k,\widetilde\He_{2n+1}(\F))\), but before doing that let us dive a little further into this pathology. Let us consider the honest orbit space
\[\widetilde{E}(k,n,2r):= E(k,n,2r)/A^k\,.\]
To see what the points over \(\widetilde{E}(k,n,2r)\) look like, note that the projection into the first coordinate \(p\colon E(k,n,2r)\to X(k,n,2r)\) is \(A^k\)-equivariant with respect to the trivial action on \(X(k,n,2r)\), hence it induces a map \[q\colon \widetilde{E}(k,n,2r)\to X(k,n,2r)\,,\]
 whose fibers are \(q^{-1}(f) = \ker f/\rho_f(A^k)\). To decide whether these fibers are actual tori or irrational tori, consider the rational points of the variety \(X(k,n,2r)\), that is, the subspace 
\[X(k,n,2r)_\Q = \{f\in X(k,n,2r)\,|\,\text{rank}(\rho_f(A^k)) = \dim(\ker f) \}\,.\]
where we consider the dimension of \(\ker f\subset \F^k\) as a real vector space.

\begin{remark}
  Perhaps it is more common to say that a point \(V\) in the Grassmannian \(\Gr(d,\F^k)\) is rational if \(V\cap A^k\) has rank \(d\), but this is equivalent to \(\rho_V(A^k)\) having rank \(d\), as well, where \(\rho_V\colon \F^k\to V\) is the orthogonal projection. Indeed, if \(V\cap A^k\) has basis \(\{v_1, \ldots, v_d\}\) then for any point \(a \in A^k\), the projection \(\rho_V(a)\) is \(\sum_{i=1}^d \lambda_iv_i\) where \((\lambda_1, \ldots, \lambda_d)\) is the solution of the system of equations \(v_j \cdot (a-\sum_{i=1}^d \lambda_iv_i) = 0\) for \(j=1,\ldots,d\); in particular, the \(\lambda_i\) are rational numbers with denominator the determinant of the Gram matrix, \(\det (v_i \cdot v_j)_{i,j} =: N\), implying that \(\rho_V(A^k)\) is also rank \(d\), being a subgroup of \(V\cap A^k\) of index at most \(N^d\).

  Conversely, if \(\rho_V(A^k)\) is of rank \(d\), pick \(d\) vectors in \(A^k\) whose projections form a basis for it, and let \(P\) be the subgroup of \(A^k\) generated by these \(d\) vectors. There must be a basis \(\{a_1, \ldots, a_m\}\) of \(A^k\) ---here \(m=k\) or \(m=2k\), depending on whether \(\F = \R\) or \(\C\)--- such that the first \(d\) elements have integer multiples \(\lambda_1 a_1, \ldots, \lambda_d a_d\) which form a basis for \(P\). Then, since for every \(i = d+1, \ldots, m\), since \(\rho_V(a_i) \in \langle\rho_V(\lambda_1 a_1), \ldots, \rho_V(\lambda_d a_d)\rangle\), there is a vector of the form \(a'_i := a_i - \sum_{j=1}^d \mu_{ij} a_j\) such that \(a'_i \in V^{\perp}\). Since \(V^{\perp} \cap A^k\) has rank \(m-d\), \(V \cap A^k\) must have rank \(d\). 
\end{remark}

Then we have two types of fibers with respect to \(p\):
\begin{itemize}
\item \(q^{-1}(f)\) is a torus of rank \(\dim(\ker f)\) if \(f\in X(k,n,2r)_\Q\).
\item \(q^{-1}(f)\) is an irrational torus if \(f\in X(k,n,2r)-X(k,n,2r)_\Q\).
\end{itemize}

Now the first type of fiber is a Hausdorff space, and the closure of any point in the second type of fiber is the whole fiber. We can now describe the topology of the \(\mathcal{T}_1\)-quotient and the maximal Hausdorff quotient.

\subsubsection{\(\mathcal{T}_1\)-quotient}\label{T1-quotient}

We may construct the \(\mathcal{T}_1\)-quotient of \(\widetilde{E}(k,n,2r)\) by means of identifying points \(x\sim y\) if the intersection of the closure of their \(A^k\)-orbits \(\overline{\mathcal{O}}_x\cap \overline{\mathcal{O}}_y\) is non-empty in \(E(k,n,2r)\), as long as this identification actually yields a \(\mathcal{T}_1\)-space. Running this procedure we see that for any \(f\in X(k,n,2r)-X(k,n,2r)_\Q\)  the whole fiber \(q^{-1}(f)\) will identify to a single point, as it is an indiscrete topological space. On the other hand if \(f\in X(k,n,2r)_\Q\), then \(q^{-1}(f)\) is closed in \(\widetilde{E}(k,n,2r)\) so all of its points are closed and will remain unaffected under the identification. We may describe the resulting space as follows. Consider the composite \(\sigma_0\) \[X(k,n,2r)\xrightarrow{s_0} E(k,n,2r)\to \widetilde{E}(k,n,2r)\]
 in which the first map is the zero section \(s_0(f) = (f,0)\) and the second one is the honest quotient map. Also consider the subspace
 \[\widetilde{E}(k,n,2r)_\Q := q^{-1}(X(k,n,2r)_\Q)\subset \widetilde{E}(k,n,2r)\]
 For a topological space \(X\), let us write \(X^{\mathcal{T}_1}\) for its \(\mathcal{T}_1\)-quotient. The resulting space is then the union of these two subspaces
 \[\widetilde{E}(k,n,2r)^{\mathcal{T}_1}=\sigma_0(X(k,n,2r))\cup \widetilde{E}(k,n,2r)_\Q \]
where the quotient map \(\widetilde{E}(k,n,2r)\to\widetilde{E}(k,n,2r)^{\mathcal{T}_1}\) is the one that collapses all fibers \(q^{-1}(f)\), with \( f\in X(k,n,2r)-X(k,n,2r)_\Q\), to its corresponding point in \(\sigma_0(X(k,n,2r))\).
 
 Similarly as before, every path connected component of \(\Rep(\Z^k,\widetilde\He_{2n+1}(\R))^{\mathcal{T}_1}\) is homeomorphic to one of \(\widetilde{E}(k,n,2r)^{\mathcal{T}_1}\) for some \(2r\leq k\).

\subsubsection{\(\mathcal{T}_2\)-quotient }

For a topological space \(X\), let us write \(X^{\mathcal{T}_2}\) for its maximal Hausdorff quotient.

\begin{proposition}
If \(r>0\), \(\widetilde{E}(k,n,2r)^{\mathcal{T}_2} = X(k,n,2r)\), and \(\widetilde{E}(k,n,0)^{\mathcal{T}_2} = X(k,n,0)\vee(S^1)^{\tau k}\), where \(\tau =\dim_\R(\F)\).
\end{proposition}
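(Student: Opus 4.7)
For both cases the goal is to identify $\widetilde{E}(k,n,2r)^{\mathcal{T}_2}$ as a specific quotient of $\widetilde{E}(k,n,2r)$, and the common tool is the universal property: every continuous $g \colon \widetilde{E}(k,n,2r) \to Y$ with $Y$ Hausdorff should factor through the claimed model. The starting point in both cases is that for $f \notin X(k,n,2r)_\Q$ the fiber $q^{-1}(f)$ is indiscrete (as recorded just before the proposition, since $\rho_f(A^k)$ is dense in $\ker f$), so any continuous map to a $\mathcal{T}_0$ space—in particular any Hausdorff target—is constant on that fiber.

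In the case $r > 0$, I would propagate this to the rational fibers by an approximation argument. Given $f \in X(k,n,2r)_\Q$ and two points $[f, v_1], [f, v_2]$ of $q^{-1}(f)$, pick a sequence $f_m \to f$ with $f_m \notin X(k,n,2r)_\Q$ and continuous lifts $v_{i,m} \in \ker f_m$ with $v_{i,m} \to v_i$ (the kernels vary continuously with $f$ on the locus of constant rank, which is all of $X(k,n,2r)$ for $r > 0$). Since $g([f_m, v_{1,m}]) = g([f_m, v_{2,m}])$ by the indiscrete-fiber step, continuity of $g$ and uniqueness of limits in $Y$ force $g([f, v_1]) = g([f, v_2])$; thus $g$ is constant on every fiber of $q$, yielding $\widetilde{E}(k,n,2r)^{\mathcal{T}_2} = X(k,n,2r)$.

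For $r = 0$ the same propagation handles every fiber over $f \ne 0$, collapsing it to the point $f \in X(k,n,0)$. The fiber over the zero map, however, is $\ker(0)/\rho_0(A^k) = \F^k/A^k = (S^1)^{\tau k}$, a Hausdorff torus in its own right. I would then construct the Hausdorffification explicitly: define
\[
\Phi \colon \widetilde{E}(k,n,0) \longrightarrow X(k,n,0) \vee (S^1)^{\tau k}
\]
by $\Phi([f, v]) = f$ if $f \ne 0$ and $\Phi([0, v]) = v \bmod A^k$, with the wedge point being $[0, 0] = 0 \in X(k,n,0) = 0 \in (S^1)^{\tau k}$. The plan is then to check that $\Phi$ is continuous, that its codomain is Hausdorff, and that the identifications it induces are the minimal ones making this so, via the universal property.

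The main obstacle lies in the $r = 0$ case, specifically at $f = 0$. The approximation argument used for $r > 0$, if applied naively, also yields sequences $[f_m, v_m]$ with $f_m \notin X(k,n,0)_\Q$ that appear to converge to both $[0, v]$ and $[0, 0]$ through distinct representatives of the same orbit (since $\rho_{f_m}(A^k)$ shifts $v_m$ by amounts that themselves tend to $v$ as $f_m \to 0$), which would seem to collapse the entire $(S^1)^{\tau k}$ onto $[0, 0]$. The delicate point is that precisely at $f = 0$ the rank of $f$ drops and the continuous lifting $v_m \in \ker f_m \to v \in \F^k$ interacts non-trivially with the reduced $A^k$-action on the fiber $\F^k/A^k$, so the only identifications that actually propagate to the Hausdorff quotient are those already present within the Hausdorff fiber itself. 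Making this obstruction precise—by a careful case analysis of which sequences of representatives can be chosen coherently—and concluding that $\Phi$ realizes the universal Hausdorff quotient, is the crux of the argument.
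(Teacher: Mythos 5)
Your approach --- verifying the universal property of the maximal Hausdorff quotient directly by showing that every continuous map to a Hausdorff target is forced to collapse each fiber of $q$, using indiscreteness of irrational fibers plus a sequential approximation argument --- is a reasonable alternative to the paper's proof, which instead passes through the $\mathcal{T}_1$-quotient of Subsection 4.2.1 and argues in terms of separability of points by disjoint open sets in $\widetilde{E}(k,n,2r)^{\mathcal{T}_1}$. The two strategies are close in spirit, but yours contains an error and leaves the crucial case open.

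The parenthetical claim that ``the kernels vary continuously with $f$ on the locus of constant rank, which is all of $X(k,n,2r)$ for $r>0$'' is false. For $r>0$ and $k>2r$, the rank of $f$ on $X(k,n,2r)$ ranges over $\{2r,\ldots,\min(k,2n,n+2r)\}$ --- this is precisely the rank stratification introduced in Section 5 of the paper --- so there are several strata and the kernel dimension is not constant. Consequently, if $f$ lies in a lower stratum and $f_m\to f$ through a higher stratum, then $\ker f_m$ converges in the Grassmannian to a proper subspace of $\ker f$, and there is no choice of lifts $v_{i,m}\in\ker f_m$ converging to a given $v_i\in\ker f$. The argument can be repaired by noting that irrational points are dense within each stratum $X(k,n,2r)_d$ (the rationality locus being a countable union of proper subvarieties), so one may choose the approximating sequence $f_m$ inside the same stratum as $f$, where the kernel does vary continuously; but the global constant-rank assertion should be dropped.

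For $r=0$ you correctly isolate the delicacy at $f=0$ and propose the candidate map $\Phi$, but you explicitly defer the resolution --- showing that $\Phi$ realizes the universal Hausdorff quotient --- as ``the crux'' without carrying it out. To finish one still has to verify that $\Phi$ is a quotient map onto $X(k,n,0)\vee(S^1)^{\tau k}$ with its usual topology and that this wedge is Hausdorff; only then does the universal property close the argument. The paper's proof avoids this by working in the $\mathcal{T}_1$-quotient and observing directly that points in the fiber $(q')^{-1}(0)\cong(S^1)^{\tau k}$, which is already Hausdorff, can be separated by disjoint opens, whereas points in any other single fiber of $q'$ cannot.
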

\begin{proof}
For \({q}\colon\widetilde{E}(k,n,2r)\to X(k,n,2r)\) let \(q^\prime\colon\widetilde{E}(k,n,2r)^{\mathcal{T}_1}\to X(k,n,2r)\) be the unique map that factors \({q}\) through the quotient map \(\widetilde{E}(k,n,2r)\to \widetilde{E}(k,n,2r)^{\mathcal{T}_1}\). Then note that any two points in distinct fibers of \({q}^\prime\) posses disjoint open neighborhoods, but no two points in a single fiber \((q^\prime)^{-1}(f)\) have disjoint open neighborhoods, except when \(f=0\), since \((q^\prime)^{-1}(0)=(S^1)^{\tau k}\).
\end{proof}

\begin{remark}
Note that up to homotopy, the \(\mathcal{T}_2\)-quotient of \(\Rep(\Z^k,\widetilde\He_{2n+1}(\F))\) is no different from the space of almost commuting elements \(B_k(\h_{2n+1}(\F),\Z))\), except at the component of the trivial representation. 
\end{remark}

\section{A filtration of \(X(k,n,0)\) and stable homotopy type of strata}

In this last section, we follow \cite{CrabbGoncalves} to give a homotopy stable decompositions of the space of matrices that span an isotropic subspace of $(\F^{2n},\omega_n)$ of a given rank. The idea is to use Crabb's equivariant version \cite{Crabb} of Miller's stable splitting of Stiefel manifolds \cite{Miller}.


\subsection{The filtration by rank}

Recall that we may see the spaces \(X(k,n,2r)\) as matrices \(2n\times k\) matrices and hence they can be stratified by rank. Consider that map
 \[\pi\colon X(k,n,2r)\to \{2r,...,\min(k,2n,n+2r)\}\] 
given by \(\pi(f) = \dim((\ker f)^\perp)\). This is a continuous map with respect to the poset topology, and yields a poset stratification for \(X(k,n,2r)\). Then for every \(2r \leq d\leq\min(k,2n,n+2r)\) the \(d\)-stratum is
\[X(k,n,2r)_d:=\pi^{-1}(d) = \{f\in X(k,n,2r)\,|\,\dim(\ker f) = k-d\}\,.\] 
It is clear why \(d\leq \min(k,2n)\), and the extra constraint in the top rank dimension is related to the fact that if \(d>2r\), the column space of each matrix in \(X(k,n,2r)_d\) also contains an isotropic subspace of \(\F^{2n}\) with respect to \(\omega_n\).

Now consider the map 
\begin{align*}
g_d\colon X(k,n,2r)_d&\to \Gr(k-d,\R^k)\\
f&\mapsto \ker f
\end{align*}
Then we may exhibit a filtration of \(E(k,n,2r)\) by considering the pullback
\[\xymatrix{
E(k,n,2r)_d\ar[r]\ar[d]_{p_d}&\gamma_k^{k-d}\ar[d]\\
X(k,n,2r)_d\ar[r]_{g_d}&\Gr(k-d,\R^k)
}\]
of the tautological bundle \(\gamma_k^{k-d}\to \Gr(k-d,\R^k)\) along \(g_d\). Explicitly 
\[E(k,n,2r)_d:=\{(f,v)\in E(k,n,2r)\,|\,\dim(\ker f) = k-d\}\,.\]
We thus have a stratification of both the moduli space \(B_k(\h_{2n+1}(\F),A)/\widetilde{\He}_{2n+1}(\F)\) via the \(E(k,n,2r)_d\) and of \(\Hom(\Z^k,{\He}_{2n+1}(\F))=B_k(\h_{2n+1}(\F),0)\) via \(X(k,n,2r)_d\times\F^k\).

For completeness, the associated filtrations are
\begin{align*}
 E_0\subset E_1\subset\cdots\subset E_{\min(k,2n,n+2r)-2r} &= E(k,n,2r)\\
 X_0\subset X_1\subset\cdots\subset X_{\min(k,2n,n+2r)-2r} &= X(k,n,2r)
\end{align*}
where each stage of the filtration is given by
\[E_i=\bigcup^{i}_{j = 0}{E(k,n,2r)_{2r+j}}\,\,\,\,\text{and}\,\,\,\,X_i=\bigcup^{i}_{j = 0}{X(k,n,2r)_{2r+j}}\,.\]

\begin{remark}
We also have the corresponding orbit spaces \(\widetilde{E}(k,n,2r)_d := E(k,n,2r)_d/A^k\), which yield a stratification of \(\Rep(\Z^k,\widetilde{\He}_{2n+1}(\F))\). For its maximal Hausdorff quotient the \(d>0\) stratum is \(\widetilde{E}(k,n,2r)_d^{\mathcal{T}_2} = X(k,n,2r)_d\), and the 0-stratum is
\[\widetilde{E}(k,n,0)_0^{\mathcal{T}_2} = (S^1)^{\tau k}\]
where as before \(\tau = \dim_\R(\F)\).
\end{remark}

\subsection{Isotropic isometric frame extends to symplectic isometric frame}

To use Miller's splitting, we need a correspondence between isotropic frames in $(\F^{2n},\omega_n)$ and orthogonal frames in $D(\F)^{n}$. We achieve this by restricting to isotropic frames which are also orthogonal, which we will see are in correspondence with orthogonal symplectic frames.

 Instead of considering matrices \(M\) such that \(M^T\Omega_n M =0\),  it will be simpler to replace \(\Omega_n\) by the standard symplectic matrix \(J_n\), which gives a homeomorphic space of matrices \(M\). So let \(\nu_n\) be the symplectic form on \(\F^{2n}\) defined by \(J_n\), and for any vector space \(V\) over \(\F\) such that \(\dim(V)\leq n\), let us denote the space of isotropic embeddings by
\[\I(2n,V,\F)=\{\phi\in\Emb(V,\F^{2n})\,|\,\phi^*\nu_n = 0\}\,.\]
 If further \(V\) has an inner product we will also consider the subspace of orthogonal isotropic embeddings 
\[\Or\I(2n,V,\F)=\{ \phi\in\I(2n,V,\F))\,|\,\phi^*\phi = id\}\,.\]
When \(V=\F^r\) we will simply write \(\I(2n,r,\F)\) and \(\Or\I(2n,r,\F)\). 

Now recall that for every vector space \(V\), the product \(V\oplus V^*\) is a symplectic space with the standard symplectic form \(((x,\xi),(y,\eta)) \mapsto \eta(x) - \xi(y)\). We let \(\Sp(2n,V,\F)\) be the space of symplectic embeddings \(V\oplus V^*\to \F^{2n}\) with respect to the standard symplectic forms.  Similarly let \(\Or\Sp(2n,V,\F)\) be the subspace of orthogonal symplectic embeddings.

\begin{lemma}\label{lem-isotextendssymp}
Whenever \(V\) is an inner vector space over \(\F\) with \(\dim V\leq n\), the map
\begin{align*}
\Or\I(2n,V,\F)&\xrightarrow{\cong} \Or\Sp(2n,V,\F)\\
\phi&\mapsto(\begin{matrix}J_n\phi & \phi\end{matrix})
\end{align*}
is a diffeomorphism, where we have borrowed the matrix notation on the right hand side to denote the map 
\[(\begin{matrix}J_n\phi & \phi\end{matrix})(v_1,v_2^*) = J_n\phi(v_1) + \phi(v_2)\,,\] 
and \(v^*:=\langle v, -\rangle\colon V\to \F\).
\end{lemma}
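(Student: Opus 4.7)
The plan is to construct an explicit inverse to the given map and verify the two are mutually inverse; smoothness is automatic from the polynomial nature of the formulas. The main non-trivial step will be showing that every $\Phi \in \Or\Sp(2n,V,\F)$ is forced to be of the form $(J_n\phi,\phi)$, a uniqueness-of-extension statement packaging orthogonality and symplecticity together.

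First I will check well-definedness by expanding $\Phi := (J_n\phi,\phi)$ bilinearly. The four-term expansion of $\Phi^*\nu_n$ reduces, via $J_n^T J_n = I$ and $J_n^2 = -I$, to two copies of $\nu_n(\phi(-),\phi(-))$ that vanish by isotropy, plus cross terms of the form $\pm\phi(v)^T\phi(w) = \pm\langle v,w\rangle_V$ that assemble into $w_2^*(v_1) - v_2^*(w_1)$. Similarly, for $\Phi^*\langle\cdot,\cdot\rangle$, the isometry property $J_n^T J_n = I$ yields the pure terms $\langle v_1,w_1\rangle_V$ and $\langle v_2,w_2\rangle_V$, while the identity $\langle J_n a, b\rangle = -\nu_n(a,b)$ combined with isotropy of $\phi$ kills the cross terms.

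For the inverse, given $\Phi \in \Or\Sp(2n,V,\F)$, I set $\phi(v) := \Phi(0, v^*)$ and $\psi(v) := \Phi(v, 0)$. Restricting $\Phi$'s preservation conditions to the Lagrangian $0 \oplus V^*$ yields $\phi \in \Or\I(2n,V,\F)$ immediately. The heart of the proof is showing $\psi = J_n\phi$. From the mixed pairings with respect to $(v,0)$ and $(0,v^*)$, the symplectic condition gives $\nu_n(\psi(v),\phi(v)) = \omega((v,0),(0,v^*)) = \langle v,v\rangle_V$, and orthogonality gives $\langle\psi(v),\phi(v)\rangle = 0$. Combining with $\langle J_n a,b\rangle = -\nu_n(a,b)$ and the isometry property of $J_n$, I compute
\[\|J_n\psi(v) + \phi(v)\|^2 = \|\psi(v)\|^2 + 2\langle J_n\psi(v),\phi(v)\rangle + \|\phi(v)\|^2 = \|v\|^2 - 2\|v\|^2 + \|v\|^2 = 0,\]
which forces $J_n\psi(v) = -\phi(v)$, i.e.\ $\psi(v) = J_n\phi(v)$ since $J_n^{-1} = -J_n$.

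I expect the main obstacle to be spotting this norm-squared identity: once written, the verification is routine, but it is exactly the pivotal step that fuses orthogonality and symplecticity into a single uniqueness statement. Conceptually, the identity reflects that $(\F^{2n},\langle\cdot,\cdot\rangle,\nu_n,J_n)$ carries a compatible $D(\F)$-structure under which orthogonal symplectic $\F$-linear extensions of $\phi$ correspond to $D(\F)$-linear ones, hence are unique. The $\F = \C$ case runs along the same lines using Hermitian conventions on the inner products and the identification $\F^{2n} \cong D(\F)^n$ with $D(\F) = \mathbb{H}$.
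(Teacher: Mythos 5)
Your proof is correct and follows essentially the same route as the paper's: the well-definedness check is the paper's matrix computation rewritten by bilinear expansion, and your key step — expanding $\|J_n\psi(v)+\phi(v)\|^2$ and showing it vanishes using the isometry and symplecticity of $\Phi$ — is exactly the "similar calculation" the paper alludes to for surjectivity, namely $(\varphi_1 - J_n\varphi_2)^T(\varphi_1 - J_n\varphi_2) = 0$ written in pointwise inner-product form.
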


\begin{proof}
First we show that \((\begin{matrix} J_n\phi &\phi\end{matrix})\) preserves the standard symplectic form over \(V\oplus V^*\). Evaluating we get  
\begin{align*}
\left(\begin{matrix} -\phi^TJ_n\\  \phi^T \end{matrix}\right)J_n(\begin{matrix} J_n\phi & \phi \end{matrix}) &= 
\left(\begin{matrix} \phi^TJ_n\phi & \phi^T\phi\\ -\phi^T\phi & \phi^TJ_n\phi\end{matrix}\right) = J_{\dim(V)}\,,
\end{align*}
as \(\phi\) preserves the zero form and orthogonal (hermitian). Since \(J_n^2 = -Id\), it follows that \((\begin{matrix} J_n\phi &\phi\end{matrix})\) is isometric. Clearly it is injective. To see that the assignment is surjective, suppose \(\varphi\colon V\oplus V^*\to \F^{2n}\) is an isometric symplectic embedding and let us write \(\varphi = (\begin{matrix} \varphi_1 & \varphi_2 \end{matrix})\). Then since \(\varphi\) preserves \(J_{\dim(V)}\), a similar calculation as above shows that \(\varphi_1 = J_n\varphi_2\).
\end{proof}

\begin{remark}
Let \(O(V)\) denote the isometric automorphisms of \(V\). The standard right \(O(V)\)-action over \(\Or\I(2n,V,\F)\) under the diffeomorphism corresponds to the action of the diagonal \(O(V)\) subgroup of \(O(V\oplus V^*)\cap\Sp(V\oplus V^*)\).
\end{remark}

\begin{lemma}\label{polar-dec-isot}
The polar decomposition for rectangular matrices restricts to a diffeomorphism 
\begin{align*}
\Or\I(2n,V,\F)\times \mathrm{Sym}(V)&\xrightarrow{\cong}\I(2n,V,\F)\\ 
(\phi,s)&\mapsto\phi\circ\exp(s)\nonumber
\end{align*}
where \(\mathrm{Sym}(V)\) is the vector space of self adjoint operators of \(V\). 
\end{lemma}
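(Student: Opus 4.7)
The plan is to invoke polar decomposition for rectangular matrices and verify that the isotropic condition is preserved by the decomposition. Recall that for any injective linear map $f \in \Emb(V, \F^{2n})$, the operator $f^*f \in \End(V)$ is self-adjoint and positive definite, so it has a unique positive definite self-adjoint square root $P = (f^*f)^{1/2}$; then $\phi := f \circ P^{-1}$ is an isometric embedding (since $\phi^*\phi = P^{-1}(f^*f)P^{-1} = id$), and $f = \phi \circ P$. Writing $P = \exp(s)$ with $s = \log P$ gives a smooth inverse to $(\phi, s) \mapsto \phi \circ \exp(s)$ on the level of embeddings. So the only real content is showing that polar decomposition restricts to the isotropic subspace.

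First I would check that the forward map $(\phi, s) \mapsto \phi \circ \exp(s)$ lands in $\I(2n, V, \F)$: since $\phi$ is injective and $\exp(s)$ is invertible, $\phi \circ \exp(s)$ is injective, and
\[(\phi \circ \exp(s))^* \nu_n (v_1, v_2) = (\phi^*\nu_n)(\exp(s)v_1, \exp(s)v_2) = 0,\]
because $\phi^*\nu_n = 0$.

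Conversely, given $f \in \I(2n, V, \F)$, form $P$ and $\phi$ as above. Since $P$ is invertible, the key observation is that
\[\phi^*\nu_n(v_1, v_2) = (f^*\nu_n)(P^{-1}v_1, P^{-1}v_2) = 0\]
for all $v_1, v_2 \in V$, so $\phi \in \Or\I(2n, V, \F)$. Setting $s = \log P \in \Sym(V)$ gives the inverse assignment $f \mapsto (\phi, s)$.

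Finally I would note that both maps are smooth: the forward map is smooth because $\exp$ and composition are smooth; the inverse map is smooth because the functional calculus $A \mapsto A^{1/2}$ (and $A \mapsto \log A$) is smooth on the open set of positive definite self-adjoint operators, and $f \mapsto f^*f$ is a smooth map on $\Emb(V, \F^{2n})$ landing in this open set. The bijectivity is immediate from the uniqueness of the polar decomposition (equivalently, the uniqueness of the positive self-adjoint square root). The main subtlety is the verification that the orthogonal factor inherits isotropy, but since $P$ is a bijection of $V$, pullback of $\nu_n$ by $f$ vanishes if and only if pullback by $\phi = f \circ P^{-1}$ vanishes --- so there is no real obstacle here beyond carefully recording the standard polar decomposition argument.
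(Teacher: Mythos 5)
Your proof is correct and follows essentially the same approach as the paper's: the paper cites Crabb--Gon\c{c}alves for the polar decomposition being a diffeomorphism on the full space of embeddings, then observes that isotropy is preserved because the positive factor is invertible and invertible maps preserve the zero form. You fill in the standard functional-calculus details of why polar decomposition is a diffeomorphism rather than citing them, but the key step --- $\phi^*\nu_n = 0 \iff (\phi\circ\exp(s))^*\nu_n = 0$ since $\exp(s)$ is invertible --- is identical.
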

\begin{proof}
Indeed, before restricting to isotropic embeddings, this is a diffeomorphism (see for example \cite[Proposition 2.1]{CrabbGoncalves}). Now, if in a polar decomposition \(A=UP\) where \(A\) is an isotropic embedding and \(U\) has orthonormal columns, it follows that \(U\) is an isotropic embedding, as well, since any invertible matrix preserves the zero form. 
\end{proof}

\begin{remark}
We should point out that the non-compact manifolds \(\I(2n,2r,\F)\) and \(\Sp(2n,2r,\F)\) are not diffeomorphic, even though their respective isometric versions are (Lemma \ref{lem-isotextendssymp}). One may verify this for example by computing the dimensions of their respective tangent spaces. Nonetheless, we may use the above polar decomposition (Lemma \ref{polar-dec-isot}) to extend the map of Lemma \ref{lem-isotextendssymp} to
\begin{align*}
\I(2n,V,\F)&\to\Sp(2n,V,\F)\\
\phi\circ\exp(s)&\mapsto(\begin{matrix}J_n\phi\circ\exp(s) & \phi\circ\exp(-s)\end{matrix})
\end{align*}
which one can verify that it is actually an embedding.
\end{remark}

\subsection{Stable splitting of \(X(k,n,0)_d\)}

We are now ready to exhibit the stable splitting, but first we summarize Crabb and Gon\c{c}alves construction.

Miller \cite[Theorem C]{Miller} proved a stable splitting for the one-point compactification of the space of isometric embeddings between two real, complex or quaternionic vector spaces, and Crabb \cite[Theorem 1.16]{Crabb} pointed out that the construction can be done equivariantly with respect to a fairly large symmetry group. Then Crabb and Gon\c{c}alves \cite{CrabbGoncalves} used this equivariant stable splitting to get a stable splitting for the space of linear maps with kernel of a fixed dimension.

To state the form of Miller's splitting they used, we need some notation for Thom spaces and a couple of standard representations. For a representation \(\rho\colon G\to O(W)\) and a principal \(G\)-bundle \(P\to X\), we let \(X^\rho\) denote the Thom space of the associated vector bundle  \(P\times_G W\to X\), namely \(P \times_G D(W) / P \times_G S(W)\) where \(D(W)\) and \(S(W)\) denote the unit disk and unit spheres in \(W\). Now let \(\ad_m\) denote the adjoint representation of \(O(\F^m)\), where we let \(\F\) also stand for the quaternions and \(O(\mathbb{H}^m) = \Sp(m)\), and let \(\can_m\) be the canonical representation of \(O(\F^m)\) over \(\Hom(\F^m,\F)\).

Miller's stable splitting is a map
\[\bigvee_{0 \le l \le d}\Gr(l, \F^d)^{\ad_l\oplus((\F^s\oplus\F^{m-d})\otimes\can_l)} \to V_d(\F^{s+m})_+\,,\]
where \(\F^{s}\) and \(\F^{m-d}\) are trivial \(O(\F^l)\) representations. 
Crabb showed the map is equivariant for a group he denoted \(\Gamma(\F^d, \F^{m-d})\), but for what follows we will only need \(O(\F^d) \times O(\F^{m-d})\)-equivariance, which comes from a homomorphism \(O(\F^d) \times O(\F^{m-d}) \to \Gamma(\F^d, \F^{m-d})\). Here we won't describe the full symmetry group or its actions, but will only describe the action of \(O(\F^d) \times O(\F^{m-d})\). On the left-hand side it is the obvious action, but on the right it is non-obvious: an element \((u,v) \in O(\F^d) \times O(\F^{m-d})\) acts on a \(\phi : \F^{d} \to \F^{s+m}\) in \(V_d(\F^{s+m})\) via \[(u,v) \cdot \phi := (1_s \oplus u \oplus v) \circ \phi \circ u^{-1}.\]

We can use this equivariance to get an equivalence of fiber bundles over \(\Gr(d,\F^m)\) from Miller's stable splitting: consider \(O(\F^m)\) as a principle \(O(\F^d) \times O(\F^{m-d})\)-bundle over the Grassmannian and apply \(O(\F^m)\times_{O(\F^d) \times O(\F^{m-d})} (-)\) to the above map. The result is Crabb and Gon\c{c}alves' splitting:
\[{\bigvee_{0 \le l \le d}\!\!\!\!\!{}_{B}\,\,\Gr(l, \zeta^{\perp}})_B^{\ad_l\oplus((\F^s\oplus\zeta)\otimes\can_l)} \to \Or(\zeta^{\perp},\F^{s+m})_B^{+}\,,\]
where: \(B\) abbreviates \(\Gr(d, \F^m)\), \(\zeta\) is the canonical bundle of \(d\)-planes over \(B\) and \(\zeta^{\perp}\) is its orthogonal complement inside the trivial bundle of rank \(m\); \(\Gr(l, \zeta^{\perp})\) and \(\Or(\zeta^{\perp}, \F^{s+m})\) are natural bundle-extensions of the Grassmannian and the space of orthogonal embeddings, producing fiber bundles over \(B\); and, the subscripts \(B\) denote constructions  for spaces over \(B\):
\begin{itemize}
\item For spaces \(X_i \to B\) equipped with a section \(B \to X_i\), the \(B\)-wedge \(\bigvee_B X_i\) is the result of identifying the sections over \(B\) in the disjoint union \(\coprod X_i\).
\item For a representation \(\rho : G \to O(W)\) and a principal \(G\)-bundle \(P \to X\) where \(X \to B\) is a space over \(B\),  the \(B\)-Thom space \(X_B^{\rho}\) is the pushout \(B \longleftarrow P \times_G S(W) \to P \times_G D(W)\). Note that this is a space over \(B\) with fibers \((X_B^{\rho})_b \cong X_b^{\rho}\) given by the Thom space for the \(G\)-bundle \(P_b \to X_b\) of fibers over \(b \in B\). Also note this \(B\)-Thom space comes with a canonical section over \(B\) from the pushout square defining it.
\item \(X_B^{+} \) is fiberwise one-point compactification, producing a space with a section over \(B\) consisting of the points at infinity from each fiber.
\end{itemize}

Finally, Crabb and Gon\c{c}alves point out that collapsing the section over \(B\) on both sides gives a stable splitting
\[\bigvee_{0 \le l \le d}\,\,\Gr(l,d,\F^m)^{\ad_l\oplus((\F^s\oplus\zeta_k)\otimes\can_l)} \to L_{m-d}(\F^m, \F^{s+m})_+\,,\]
where \(\Gr(l,d,\F^m)\) is the total space of the bundle \(\Gr(l, \zeta^\perp)\), that is, the space of pairs of mutually orthogonal subspaces of \(\F^m\) with dimensions \(l\) and \(d\); and \(L_{m-d}(\F^m, \F^{s+m})\) is the space of linear maps with kernel of dimension \(m-d\) (here we omit the explanation of why the right-hand side has that homotopy type).

\medskip

This concludes our summary of Crabb and Gon\c{c}alves' splitting, and now we proceed to adapt their method to our situation. Recall that for every \(d\leq \min(k,n)\) we have a smooth fiber bundle \(g_d\colon X(k,n,0)_d\to\Gr(k-d,\F^k)\), and given an inner product on \(\F^k\), we may identify the fibers as
\[g_d^{-1}(V) = \I(2n,{V^\perp},\F)\,.\]
Then, up to diffeomorphism, \(g_d\) is the smooth fiber bundle \[V_d(\F^k)\times _{O(\F^d)}\I(2n,d,\F)\to \Gr(d,\F^k)\cong \Gr(k-d,\F^k)\,,\] where \(O(\R^m)=\O(m)\) and \(O(\C^m) = \U(m)\). Explicitly the diffeomorphism is given by taking a pair \((\psi,\phi)\) of a \(\psi : \F^d \to \F^k\) in \(V_d(\F^k)\) and a \(\phi : \F^d \to \F^{2n}\) in \(\I(2n,d,\F)\) to the linear map \(\F^{k} \to \F^{2n}\) which is zero on the orthogonal complement of \(\im(\psi)\) and is given by \(\phi \circ \psi^{-1}\) on \(\im(\psi)\).

Now, we pass to a fiberwise equivalent bundle with compact fibers. By Lemma \ref{polar-dec-isot} the inclusion \(\Or\I(2n,d,\F)\hookrightarrow\I(2n,d,\F)\) admits a deformation retraction, and by uniqueness of polar forms, \(O(\F^d)\) acts trivially over the \(\Sym(\F^d)\) factor of \(\I(2n,d,\F)\). Hence we have an \(O(\F^d)\)-equivariant deformation retraction, which induces a fiber homotopy equivalence between  \(g_d\) and the bundle 
\[V_{d}(\F^k)\times_{O(\F^{d})}\Or\I(2n,d,\F) \to  \Gr({d},\F^k)\,.\]
By Lemma \ref{lem-isotextendssymp} and Remark \ref{rem-orframes=orsympframes}, we have a diffeomorphism \[\Or\I(2n,d,\F)\cong V_d(D(\F)^n)\,,\]
where recall that we set \(D(\R) = \C\) and \(D(\C) = \mathbb{H}\). Therefore, \(g_d\) is fiberwise homotopy equivalent to the bundle
\[V_{d}(\F^k)\times_{O(\F^d)}V_d(D(\F)^n) \to  \Gr(d,\F^k)\,.\]

In order to apply Miller's splitting for \(V_d(D(\F)^n)\) here, we will need to rewrite this fiber bundle using the non-obvious group action that Crabb used.

\begin{lemma}
  The following fiber bundles over \(\Gr(d, \F^k)\) are isomorphic:
  \[V_{d}(\F^k)\times_{O(\F^d)}V_q(D(\F)^n) \to  \Gr(d,\F^k)\, \quad\text{and}\quad O(\F^k) \times_{O(\F^d) \times O(\F^{k-d})} V_d(D(\F)^n) \to \Gr(d, \F^k),\]
  where \((u,v) \in O(\F^d) \times O(\F^{k-d})\) acts on \(\phi \in V_d(D(\F)^n)\), which we view as an isometric embedding \[\phi \colon D(\F)^d \to D(\F)^{n-d} \oplus D(\F)^d \oplus D(\F)^{k-d},\] via \((u,v) \cdot \phi := (1_{n-d} \oplus u \oplus v) \circ \phi \circ u^{-1}\) ---where we implicitly extend scalars from \(\F\) to \(D(\F)\).
\end{lemma}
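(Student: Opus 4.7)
The plan is to construct a fiber-preserving homeomorphism between the two bundles by \emph{untwisting} the contribution of $O(\F^{k-d})$ to the action on $V_d(D(\F)^n)$. First I will define a twist map
\[\Theta\colon O(\F^k) \times V_d(D(\F)^n) \to O(\F^k) \times V_d(D(\F)^n),\quad (g,\phi) \mapsto (g,\, (\mathrm{id} \oplus \tilde g) \circ \phi),\]
where $\tilde g \in O(D(\F)^k)$ is the scalar extension of $g$ along $\F \hookrightarrow D(\F)$, and $\mathrm{id} \oplus \tilde g$ denotes the isometry of $D(\F)^n$ obtained by acting as the identity on the complementary summand in the decomposition of $D(\F)^n$ used in the lemma's twisted action. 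Since $\tilde g$ is invertible, $\Theta$ is a homeomorphism.

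Next, I will check that $\Theta$ intertwines the twisted action of $O(\F^d) \times O(\F^{k-d})$ with the \emph{untwisted} action in which $(u,v)$ acts on $\phi$ solely as $\phi \mapsto \phi \circ u^{-1}$, with the $O(\F^{k-d})$-factor acting trivially. The essential content of this calculation is that scalar extension is a group homomorphism $O(\F^k) \to O(D(\F)^k)$, so right multiplication $g \mapsto g(u \oplus v)$ produces $\widetilde{g(u \oplus v)} = \tilde g \circ (u \oplus v)$; the new factor $(u \oplus v)$ on the right then cancels precisely the post-composition by $(\mathrm{id} \oplus u \oplus v)$ in the twisted action on $\phi$.

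Once the twisted action is replaced by the untwisted one, $O(\F^{k-d})$ acts trivially on the fiber and the associated bundle telescopes:
\[O(\F^k) \times_{O(\F^d)\times O(\F^{k-d})} V_d(D(\F)^n) \;\cong\; \bigl(O(\F^k)/O(\F^{k-d})\bigr) \times_{O(\F^d)} V_d(D(\F)^n) \;=\; V_d(\F^k)\times_{O(\F^d)} V_d(D(\F)^n),\]
which is the first bundle in the statement. The induced homeomorphism is manifestly fiberwise over $\Gr(d,\F^k)$.

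I expect no serious obstacle: the essential idea is that the $(\mathrm{id} \oplus u \oplus v)$ in the twisted action precisely records the part of the right action of $O(\F^d) \times O(\F^{k-d})$ on $O(\F^k)$ that post-composition by the scalar extension $\tilde g$ picks up, so the two can be made to cancel after a single change of variables. The main risk is clerical, namely keeping track of left versus right actions and distinguishing the $\F$-structure on the source of $\phi$ from the $D(\F)$-structure on its target.
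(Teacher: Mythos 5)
Your proposal is correct and, after unwinding, is the same argument the paper gives: your twist map $\Theta(g,\phi) = (g, (\mathrm{id}\oplus\tilde g)\circ\phi)$ followed by the identification $O(\F^k)/O(\F^{k-d}) \cong V_d(\F^k)$ is precisely the paper's explicit bijection $[(\eta,\phi)] \mapsto [(\eta|_{\F^d}, (1\oplus\eta)\circ\phi)]$. The paper simply writes down that formula and its inverse $[(\psi,\phi)] \mapsto [(\eta,(1\oplus\eta^{-1})\circ\phi)]$ directly (choosing an extension $\eta$ of $\psi$) and verifies them, whereas you package the same cancellation as an equivariant untwisting, which is a perfectly good and slightly more conceptual presentation of the same check.
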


\begin{proof}
  The  isomorphisms are given by:
  \begin{align*}
    [(\psi,\phi)] & \mapsto [(\eta,(1\oplus\eta^{-1})\circ\phi)], \quad\text{and}\\
    [(\eta,\phi)] & \mapsto [(\eta|_{\F^d}, (1\oplus \eta)\circ\phi)],
  \end{align*}
  where in the first map \(\eta \in O(\F^k)\) is an arbitrarily chosen extension of \(\psi : \F^d \to \F^k\) to an orthogonal map on all of \(\F^k\). It is straightforward to check the maps are well-defined, mutually inverse and commute with the projections to the Grassmannian.
\end{proof}

Finally, we can apply the functor \(O(\F^k) \times_{O(\F^d) \times O(\F^{k-d})}(-)\) to Miller's equivariant splitting for \(V_d(D(\F)^n)_+\), and then collapse the base sections on either side, to obtain a splitting for the stratum \(X(k,n,0)_d\), as recorded in the following proposition. (Notice that the splitting is equivariant with respect to \(O(D(\F)^d) \times O(D(\F)^{k-d})\), but we only use the subgroup \(O(\F^d) \times O(\F^{k-d})\).)

\begin{theorem}\label{prop:stable-splitting}
 For every \(0\leq d \le k \le n\), the \(d\)-stratum \(X(k,n,0)_d\) with a disjoint basepoint splits stably as a wedge of Thom spaces
\[\bigvee_{0\leq l\leq d}[O(\F^k)\times_{O(\F^d)\times O(\F^{k-d})}\Gr(l,D(\F)^d)]^{\ad_l\oplus(D(\F)^{n-d}\oplus\zeta)\otimes\can_l}\,.\] 
\end{theorem}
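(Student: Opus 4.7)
The plan is to directly adapt the Crabb--Gon\c{c}alves construction, as outlined in the paragraphs preceding the theorem, to the $d$-stratum $X(k,n,0)_d$, using the identifications of isotropic embedding spaces established earlier in the section.

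First, I would present $g_d \colon X(k,n,0)_d \to \Gr(k-d, \F^k) \cong \Gr(d, \F^k)$ as the associated bundle $V_d(\F^k) \times_{O(\F^d)} \I(2n, d, \F)$, exactly as described just above the theorem. The polar decomposition of Lemma \ref{polar-dec-isot}, combined with the triviality of the $O(\F^d)$-action on the $\Sym(\F^d)$ factor, gives an $O(\F^d)$-equivariant deformation retraction of $\I(2n, d, \F)$ onto $\Or\I(2n, d, \F)$, which is in turn $O(\F^d)$-diffeomorphic to $V_d(D(\F)^n)$ by Lemma \ref{lem-isotextendssymp} and Remark \ref{rem-orframes=orsympframes}. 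Hence $g_d$ is fiber homotopy equivalent to $V_d(\F^k) \times_{O(\F^d)} V_d(D(\F)^n) \to \Gr(d, \F^k)$. I would then apply the lemma just preceding the theorem to rewrite this bundle as $O(\F^k) \times_{O(\F^d) \times O(\F^{k-d})} V_d(D(\F)^n)$, now equipped with the non-obvious $(O(\F^d) \times O(\F^{k-d}))$-action that arises as the restriction of Crabb's $\Gamma(D(\F)^d, D(\F)^{k-d})$-action. This reformulation is decisive because Miller's stable splitting is equivariant under $\Gamma$ and hence under this subgroup.

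The last step is to apply Miller's $\Gamma$-equivariant splitting for $V_d(D(\F)^n)_+$: writing $D(\F)^n = D(\F)^{n-k} \oplus D(\F)^{k-d} \oplus D(\F)^d$, it produces a stable map
\[
\bigvee_{0 \le l \le d} \Gr(l, D(\F)^d)^{\ad_l \oplus ((D(\F)^{n-k} \oplus D(\F)^{k-d}) \otimes \can_l)} \to V_d(D(\F)^n)_+.
\]
Applying the Borel construction $O(\F^k) \times_{O(\F^d) \times O(\F^{k-d})} (-)$ and then collapsing the fiberwise basepoint sections over $\Gr(d, \F^k)$, exactly as in Crabb--Gon\c{c}alves, recovers $X(k,n,0)_{d,+}$ on the right. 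On the left, the trivial $O(\F^{k-d})$-summand $D(\F)^{n-k}$ remains a trivial bundle, while the canonical $O(\F^{k-d})$-summand $D(\F)^{k-d}$ is converted, after the Borel construction, into the $D(\F)$-extension of the canonical bundle $\zeta$ over $\Gr(d,\F^k)$, consolidated as $D(\F)^{n-d} \oplus \zeta$ in the statement.

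The main obstacle I anticipate lies in this final step: correctly matching the representation-theoretic data of Miller's splitting to the bundle $\ad_l \oplus (D(\F)^{n-d} \oplus \zeta) \otimes \can_l$ appearing in the statement, and verifying that the equivariant basepoint-collapse argument of Crabb and Gon\c{c}alves carries through cleanly under the $\Gamma$-action we have specified. The first two steps are direct applications of the lemmas already proved and should present no serious difficulty.
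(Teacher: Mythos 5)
Your proposal follows essentially the same route as the paper: the paper's ``proof'' of Theorem~\ref{prop:stable-splitting} is precisely the sequence of reductions in the paragraphs leading up to it — presenting $g_d$ as the bundle $V_d(\F^k)\times_{O(\F^d)}\I(2n,d,\F)$, retracting fiberwise to $\Or\I(2n,d,\F)\cong V_d(D(\F)^n)$ via Lemmas~\ref{polar-dec-isot},~\ref{lem-isotextendssymp} and Remark~\ref{rem-orframes=orsympframes}, rewriting the bundle via the lemma with the non-obvious $O(\F^d)\times O(\F^{k-d})$-action, applying the Borel construction $O(\F^k)\times_{O(\F^d)\times O(\F^{k-d})}(-)$ to Miller's equivariant splitting, and collapsing the base sections — and you reproduce each of these steps with the correct lemma citations.

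One small point about your final bookkeeping step, which you rightly flag as the most delicate part. Your decomposition $D(\F)^n = D(\F)^{n-k}\oplus D(\F)^{k-d}\oplus D(\F)^d$ is the correct one (taking $s=n-k$, $m=k$ in the paper's summary of Miller), and after the Borel construction the canonical $O(\F^{k-d})$-summand $D(\F)^{k-d}$ does become a $D(\F)$-scalar-extension of the $(k-d)$-plane bundle. Just note that this is the bundle of \emph{complements} $\zeta^\perp$ over $\Gr(d,\F^k)$, equivalently the tautological bundle over $\Gr(k-d,\F^k)$ — not the $d$-plane tautological bundle, as your phrase ``canonical bundle $\zeta$ over $\Gr(d,\F^k)$'' might suggest. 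This matches the fact that $g_d$ is literally a map to $\Gr(k-d,\F^k)$ recording $\ker f$, so $\zeta$ in the theorem statement should be read as that $(k-d)$-plane bundle.
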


{
 {\sc
  Instituto de Matem\'aticas, UNAM, Mexico City, Mexico}\\ 
  \emph{E-mail address}:
  {\texttt{omar@matem.unam.mx}}}\\
  
  {
  {\sc
  Centro de Investigaci\'on en Matem\'aticas, Unidad M\'erida, Yucat\'an, Mexico}\\ 
  \emph{E-mail address}:
  {\texttt{bernardo.villarreal@cimat.mx}}}

\end{document}